\newtheorem{thm}{Theorem}[section]
\newtheorem{lem}[thm]{Lemma}
\newtheorem{prop}[thm]{Proposition}
\newtheorem{cor}[thm]{Corollary}
\theoremstyle{definition}
\newtheorem{defn}[thm]{Definition}
\newtheorem{rmk}[thm]{Remark}
\newtheorem{ex}{Example}
\newcommand{\stkout}[1]{\ifmmode\text{\sout{\ensuremath{#1}}}\else\sout{#1}\fi}
\def\K{\mathcal{K}}
\def\LK{\mathcal{LK}}
\def\SK{\mathcal{SK}}
\def\LSK{\mathcal{LSK}}
\def\R{\mathbb{R}}
\def\Z{\mathbb{Z}}
\def\P{\mathcal{P}}
\begin{document}
	\title{Legendrian singular links and singular connected sums}
	\author{Byung Hee An}
	\address{Center for Geometry and Physics, Institute for Basic Science (IBS), Pohang 790-784, Republic of Korea}
	\email{anbyhee@ibs.re.kr}
	\author{Youngjin Bae}
	\email{yjbae@ibs.re.kr}
	\author{Seonhwa Kim}
	\email{ryeona17@ibs.re.kr}
	
	\begin{abstract}
		We study Legendrian singular links up to contact isotopy. Using a special property of the singular points, we define the singular connected sum of 
		Legendrian singular links. This concept is a generalization of the connected sum and can be interpreted as a tangle replacement,  
		which provides a way to classify Legendrian singular links. Moreover, we investigate several phenomena only occur in the Legendrian setup.
	\end{abstract}
	\subjclass[2010]{Primary 57M25, 57R17; Secondary 53Dxx.}
	\keywords{Legendrian singular links, singular connected sum}
	\maketitle
	\tableofcontents
	\section{Introduction}
	A Legendrian singular link of degree $m$ with $n$-components is the image of an immersion of $n$-copies of $S^1$ into $S^3$ whose tangent vectors are contained in the contact structure $(S^3,\xi_{std})$ {and} which has $m$ transverse double points as its only singularities.
	Legendrian singular links are discussed in \cite{FT,T} as a theme of Vassiliev type invariants, and appeared in \cite{Ch} to give an algorithm for producing possible Lagrangian projections of Legendrian knots.
	To the best of the authors' knowledge, Legendrian singular links have not yet been studied in their own right.
	
	The $h$-principle \cite[\S16.1]{EM} says that the study of Legendrian singular links up to Legendrian regular homotopy reduces to a homotopic theoretic question,
	thus there can be no interesting phenomena from the perspective of contact topology.
	We instead study Legendrian singular links up to (ambient) contact isotopy, which preserves transversality\footnote{This is not to be confused with the transverse knots. Here `transverse' means that the two tangent vectors at the singular point span the contact plane at that point.} and the Legendrian property at each singular point. 
	
	The degree of a given Legendrian singular link can be reduced via {\em resolutions\footnote{Sometimes called `smoothing' in the literature.}} as usual for singular links.
	So Legendrian singular links ($\LSK$) can be reduced to singular links ($\SK$) via the {\em forgetful map $\|\cdot\|$}, which takes the underlying singular link type, and to Legendrian links ($\LK$) via resolutions $\mathcal{R}$ with the following commutative diagram of various link theories:
	$$
	\xymatrix{
		\mathcal{LSK}\ar[rr]^-{\mathcal{R}}\ar[d]_{\|\cdot\|}&&\mathcal{LK}\ar[d]^{\|\cdot\|}\\
		\mathcal{SK}\ar[rr]^{\mathcal{R}}&&\mathcal{K}
	}
	$$
	See \S\ref{sec:variouslinks} and \ref{sec:resolution} for the precise definitions.
	
	The goal of this article is twofold. First, we investigate various invariants for $\LSK$ including Thurston-Bennequin number, rotation number, and the resolutions with supporting examples and argue that $\LSK$ is not a straightforward combination of $\LK$ and $\SK$. 
	The other is to develop a useful tool, called {\em singular connected sum}, and show that it distinguishes a particular pair of Legendrian singular links that can not be distinguished in $\LK$ under any resolution or in $\SK$ under $\|\cdot\|$.
	
	The above two goals are deeply related to a special property of the singular points of Legendrian singular links.
	Specifically, through contact isotopy, one can keep track of the relative position of two tangent vectors at each singular point by the co-orientation of the contact structure $\xi_{std}$ on $S^3$.
	This allows to define an {\em order} at each singular point which is equivariant under contact isotopy. 
	
	Moreover this property enables us to define the notion of connected sum at singular points. We define a {\em singular connected sum $(L_1,p_1)\otimes(L_2,p_2)$} by simultaneously performing connected sums on two pairs of arcs near singular points $p_i$ of $L_i$.
	
	\begin{thm}\label{thm:singularconnectedsum}
		For a given pair of Legendrian singular links $L_1,L_2$ with singular points $p_1$, $p_2$,
		the singular connected sum $(L_1,p_1)\otimes(L_2,p_2)$ is well-defined.
	\end{thm}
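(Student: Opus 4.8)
The plan is to reduce the statement to a local normalization, together with the standard fact that the contact connected sum of $(S^3,\xi_{std})$ with itself is again $(S^3,\xi_{std})$. We first establish a relative Darboux-type lemma: around any singular point $p_i$ of $L_i$ there is a contact embedding of a standard Darboux ball, $\phi_i\colon(B^3,\xi_{std})\hookrightarrow(S^3,\xi_{std})$, onto a ball $B_i\ni p_i$ that carries $L_i\cap B_i$ onto a fixed local model $X\subset B^3$ consisting of two transverse Legendrian arcs through the center whose tangent lines span the contact plane there. This is obtained by applying the Darboux theorem at $p_i$, straightening one branch by the Legendrian neighborhood theorem, and then straightening the second branch rel the first, using that the two tangent directions span $\xi_{p_i}$. (If the marked point is regular, the same discussion applies with $X$ a single unknotted Legendrian arc and specializes to the ordinary Legendrian connected sum, in line with the point of view that $\otimes$ generalizes $\#$.) With such models fixed, we realize $(L_1,p_1)\otimes(L_2,p_2)$ by excising the interiors of $B_1$ and $B_2$ and gluing $S^3\setminus\mathrm{int}\,B_1$ to $S^3\setminus\mathrm{int}\,B_2$ by a contactomorphism of the boundary spheres that matches the marked points of $L_1$ on $\partial B_1$ with those of $L_2$ on $\partial B_2$; the resulting contact manifold is again $(S^3,\xi_{std})$ and it carries a Legendrian singular link.

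The delicate point is that this gluing is not canonical a priori: there are genuinely distinct ways to match the two branches of $L_1$ at $p_1$ with the two branches of $L_2$ at $p_2$, and different matchings give non-isotopic links in general; this is exactly the phenomenon that makes $\LSK$ more than a formal combination of $\LK$ and $\SK$. The resolution is to invoke the order at a singular point introduced earlier in the paper: since that order is read off from the co-orientation of $\xi_{std}$ and is equivariant under contact isotopy, it canonically labels the two branches at $p_1$ and at $p_2$, and we stipulate that the gluing respect these labels (and, when the components are oriented, their orientations). With this convention the combinatorial type of the matching is pinned down, and it remains to check independence of the auxiliary choices.

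Those choices are: the size and shape of the adapted Darboux balls (equivalently, the embeddings $\phi_i$); the representatives of $L_1$ and $L_2$ within their contact isotopy classes; and the contactomorphism $\partial B_1\to\partial B_2$ extending the prescribed matching. For the first, any two adapted Darboux balls at $p_i$ are carried one onto the other by an ambient contact isotopy of $(S^3,\xi_{std})$ fixing $L_i$ near $p_i$, which follows from the uniqueness of Darboux balls and the contact isotopy extension theorem; hence shrinking or reshaping $B_i$ does not affect the result. For the second, an ambient contact isotopy $L_i\simeq L_i'$ can be composed with such a correction so that it carries an adapted ball for $L_i$ onto one for $L_i'$, and transporting the gluing along this isotopy identifies the two singular connected sums. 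We expect the main obstacle to be the third choice: one must show that the contactomorphisms of the convex gluing sphere $S^2=\partial B^3$ that fix the marked points together with their order and orientation labels and the induced contact framing form a connected set modulo those which extend over the Darboux ball preserving $X$, so that any two admissible gluings differ by an ambient contact isotopy supported near the gluing sphere. This is where the contact-topological content of the theorem is concentrated; we would approach it by combining the known homotopy type of $\mathrm{Diff}^+(S^2)$ with an analysis of the contact mapping class group of the Darboux ball rel $X$, reducing any residual ambiguity to an isotopy of the Legendrian tangle in one of the two complements rel its boundary.
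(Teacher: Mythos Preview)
Your overall strategy is sound and parallels the paper's, but you have made the problem harder than it needs to be by treating the boundary gluing as an independent choice. In the paper, the gluing map is \emph{defined} as the composition
\[
\phi=\phi_{L_2,p_2}^{-1}\circ(-_{xy})\circ\phi_{L_1,p_1}\big|_{\partial B_{p_1}},
\]
i.e.\ it is routed through the fixed model $(B_{std},I_x\cup I_y)$ via the contactomorphisms supplied by the Darboux-type Lemma~\ref{lem:Darboux}. Thus there is no separate ``third choice'': once the standard neighborhoods $(B_{p_i},\phi_{p_i})$ are chosen, $\phi$ is determined, and your order/orientation bookkeeping is already absorbed into the uniqueness clause of that lemma (conditions (2)--(3) there pin down which arc is $I_x$ and which is $I_y$). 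Your proposed detour through the homotopy type of $\mathrm{Diff}^+(S^2)$ and the contact mapping class group of a Darboux ball rel $X$ is therefore unnecessary, and since that is precisely the part you leave unfinished (``we would approach it by\ldots''), the proposal as written has a gap.

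The paper then reduces everything to the single remaining ambiguity---the choice of standard neighborhood---and disposes of it in one line: given two such neighborhoods, nest one inside the other; the intermediate region is $S^2\times[0,1]$ whose tight contact structure is determined up to contact isotopy by the (standard) characteristic foliations on the two boundary spheres \cite[Theorem~4.9.4]{Ge}. This is shorter and more robust than invoking uniqueness of Darboux balls plus isotopy extension, and it simultaneously handles any residual ambiguity in the model contactomorphism $\phi_{p_i}$, since the link in the collar is just four radial Legendrian arcs. Your first two independence checks are fine, but you should replace the third by adopting the fixed-model definition of $\phi$ and the $S^2\times I$ argument.
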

	
	\begin{thm}\label{thm:singularconnectedsumdecomposition}
		Let $L$ be a Legendrian singular link and $S$ be a separating sphere for $L$ inducing a decomposition
		$L=(L_1,p_1)\otimes(L_2,p_2)$.
		Then this decomposition is well-defined up to order-preserving contact isotopy of $S$ with respect to $L$.
	\end{thm}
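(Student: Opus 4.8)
\emph{Strategy.} The plan is to reduce the statement to a relative uniqueness (``standard neighborhood'') theorem for \emph{ordered} Legendrian singular points and then feed that into the contact isotopy extension theorem. First I would make the decomposition attached to $S$ explicit: cutting $S^3$ along the separating sphere produces two balls with $B_1\cup_S B_2=S^3$, each containing a Legendrian tangle $L\cap B_i$ whose four endpoints on $S$ carry the ``half singular point'' data --- the four strand germs together with the co-orientation of $\xi_{std}$ along $S$. One then obtains $(L_i,p_i)$ by gluing to $(B_i,L\cap B_i)$ a fixed standard local model $(B_{std},X_{std},p_{std})$ of an ordered Legendrian singular point along $S$, via a contactomorphism $S\to\partial B_{std}$ matching the endpoints, the germ of the tangle, and the order. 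Two things then need to be checked: (a) for a fixed $S$, the result is independent, up to contact isotopy, of the auxiliary gluing contactomorphism; and (b) an order-preserving contact isotopy of $S$ rel $L$ changes $(L_1,p_1)$ and $(L_2,p_2)$ only by contact isotopy.

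The technical core is a local lemma which I would isolate and prove first: a closed ball neighborhood of a Legendrian singular point of prescribed order is contactomorphic to $(B_{std},X_{std},p_{std})$ by an order-respecting contactomorphism, and the group of contactomorphisms of $B_{std}$ fixing $\partial B_{std}$ pointwise and preserving $X_{std}$ together with its order is connected. I expect to prove this by a Moser-type deformation argument --- putting the singular point into a normal form in Darboux coordinates, using that the two branches are Legendrian and transverse (i.e.\ span the contact plane), and then contracting the residual freedom --- or, equivalently, by describing the characteristic foliation on $\partial B_{std}$ relative to the four marked points and invoking uniqueness of its tight filling. The same local analysis already underlies the well-definedness in Theorem~\ref{thm:singularconnectedsum}. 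It is essential that the order is part of the data: without it one could precompose the gluing with the contactomorphism exchanging the two branches, which in general alters the Legendrian singular link type of $L_i$.

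Granting the local lemma, item (a) is immediate: two choices of gluing contactomorphism differ by a contactomorphism of $B_{std}$ fixing $\partial B_{std}$ and preserving $(X_{std},p_{std})$, hence (by connectedness) by one contact-isotopic to the identity rel $\partial B_{std}$; extending that isotopy by the identity on $B_i$ yields an ambient contact isotopy between the two candidate summands carrying $p_i$ to $p_i$ and preserving its order. For item (b), an order-preserving contact isotopy $\{S_t\}$ of separating spheres for $L$ extends, by the contact isotopy extension theorem applied relative to $L$, to an ambient contact isotopy $\{\Phi_t\}$ of $S^3$ with $\Phi_t(L)=L$ and $\Phi_t(S_0)=S_t$; a little care is needed so that $\Phi_t$ never breaks the transverse/Legendrian structure at a singular point, which is guaranteed because $\{S_t\}$ is order-preserving and $L$ is carried along rigidly. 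Then $\Phi_1$ identifies the decomposition induced by $S_0$ with that induced by $S_1$, summand by summand and with orders. Finally, re-gluing the standard caps to $B_1$ and $B_2$ along $S$ manifestly reconstructs $L$ up to contact isotopy, so the terminology ``decomposition'' is consistent.

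The main obstacle is the local lemma of the second paragraph: establishing the Darboux-type normal form for an \emph{ordered} Legendrian singular point and, more delicately, the connectedness of the relative contactomorphism group. This is the step that genuinely uses transversality of the two Legendrian branches and the co-orientation of $\xi_{std}$, and it is what makes the order --- rather than merely the underlying singular link data --- the correct bookkeeping for the connected sum; a secondary but real technical point is arranging the isotopy extension in item (b) so that transversality at a singular point is preserved throughout.
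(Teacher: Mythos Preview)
Your outline is coherent and item (a) is essentially the content of Theorem~\ref{thm:singularconnectedsum} (handled in the paper via uniqueness of the tight contact structure on $S^2\times[0,1]$, not via a connectedness statement for a relative contactomorphism group).  The substantive divergence from the paper, and the gap, is in item (b).

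You assert that an order-preserving contact isotopy $\{S_t\}$ extends, ``by the contact isotopy extension theorem applied relative to $L$,'' to an ambient contact isotopy $\Phi_t$ with $\Phi_t(L)=L$ and $\Phi_t(S_0)=S_t$.  But by definition (Definition~\ref{def:cyclicorder}) the given contact isotopy $H_t$ is only required to make $H_t(S)$ a separating sphere of $L$; it need not carry $L$ to itself.  So the pair $(L,S_t)$ is not \emph{a priori} moving by any contact isotopy, and no standard isotopy extension theorem produces your $\Phi_t$.  Manufacturing such a $\Phi_t$ --- i.e.\ showing that two separating spheres connected by an order-preserving isotopy are in fact ambiently contact isotopic \emph{rel} $L$ --- is precisely the content of the theorem, not a lemma one may invoke.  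Your remark that ``a little care is needed'' near singular points understates this; the difficulty is global, not local.

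The paper avoids this entirely.  It first subdivides the interval and uses convexity to reduce to the case where $S$ and $S'$ are disjoint and cobound a product region $S^2\times I$.  It then caps off $(S^2\times I,\ \iota^{-1}(L))$ with standard balls on both ends to obtain a Legendrian singular link $L_{S,S'}\subset S^3$, and proves the key lemma that $L_{S,S'}=L_{\bigcirc\!\!\!\bigcirc}$, the identity for $\otimes$, by exhibiting a convex sphere with standard characteristic foliation containing $L_{S,S'}$ (built from two standard discs near the capping singular points joined by an annulus of transverse circles $S^1_t\subset S^2\times\{t\}$ through the four intersection points --- this is exactly where the order-preserving hypothesis is used).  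The conclusion $L_1=(L_1,\mathbf{0})\otimes(L_{S,S'},\mathbf{0})=L_1'$ then follows formally.  So the paper replaces your appeal to an unavailable relative extension theorem with an explicit identification of the ``middle piece'' as the $\otimes$-identity.
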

It is worth remarking that neither the singular connected sum nor the decomposition are well-defined in $\SK$.
Moreover 
%
we have the following rigidity phenomenon that only occurs in $\LSK$. 
		\begin{thm}\label{thm:unit}
			With respect to the singular connected sum, there exists the identity element, and moreover the only unit of degree 2 is the identity itself.
%
		\end{thm}
It would be interesting to study more about algebraic aspects of the singular connected sum on $\LSK$.
	
	On the other hand, the singular connected sum is the same as the replacement of a singular point $p_1\in L_1$ with a specific singular Legendrian tangle obtained from $(L_2,p_2)$, and {\em vice versa}.
	Indeed, the idea of Legendrian tangles and their replacement is already discussed in the literature including \cite{NT,MS,S}, although their approaches are slightly different from ours.
	There is a diagrammatic interpretation of the singular connected sum as well, which allows us to handle the operation in a convenient way.
	This interpretation is related to the {\em vertical cut} of the front projection, discussed in \cite{S}.
	
	As an application of the singular connected sum, we have the following theorem which implies that $\LSK$ is more than the {\em pull-back} of $\LK$ and $\SK$ in the commutative diagram above.

	\begin{thm}\label{thm:pairoflsk}
		There exist two Legendrian singular links sharing all classical invariants, Legendrian link types of all resolutions, and invariants from the orders, which are not contact isotopic to one another.
	\end{thm}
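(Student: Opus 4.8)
The plan is to construct the pair explicitly rather than argue abstractly. I would start from a single Legendrian singular knot, or a small building block, and produce two candidates $L$ and $L'$ by performing a singular connected sum at a chosen singular point $p$ with two auxiliary Legendrian singular links $(T,q)$ and $(T',q')$ whose underlying tangles are isotopic in $\SK$ and whose all resolutions agree in $\LK$, but which differ as Legendrian singular tangles because the \emph{order} data interacts with the ambient geometry differently. Concretely, a natural source is a pair built from a Legendrian version of a ``flype'' or a half-twist region inserted at a singular point: topologically the flype does nothing to the underlying singular link type (so $\|L\|=\|L'\|$ in $\SK$), and one arranges the front so that every resolution $\mathcal R(L)$ and $\mathcal R(L')$ is the same Legendrian link (this is the condition one must check case by case over the finitely many resolutions). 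The orders at all singular points are also arranged to coincide by choosing the co-orientations symmetrically.

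The key steps, in order, would be: (1) exhibit front diagrams for $L$ and $L'$, identifying the singular points and their orders, and write down $\mathrm{tb}$ and $\mathrm{rot}$ for each component and each resolution — verifying these classical invariants agree; (2) enumerate all resolutions $\mathcal R$ (at each of the $m$ singular points one replaces the crossing by one of the two oriented smoothings, so there are finitely many resulting Legendrian links) and check, via Legendrian Reidemeister moves or via known Legendrian invariants such as the classical ones plus, if needed, ruling/DGA invariants, that $\mathcal R(L)$ and $\mathcal R(L')$ are Legendrian isotopic for every choice; (3) check that the underlying singular links $\|L\|$ and $\|L'\|$ are isotopic in $\SK$; (4) record that the order invariants agree by construction; and finally (5) prove that $L$ and $L'$ are \emph{not} contact isotopic. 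For step (5) the leverage is Theorem~\ref{thm:singularconnectedsumdecomposition}: a contact isotopy $L\to L'$ would have to carry the separating sphere realizing the decomposition of $L$ to one realizing a decomposition of $L'$ in an order-preserving way, forcing the tangle factors $(T,q)$ and $(T',q')$ to be contact isotopic as Legendrian singular tangles; so it suffices to produce an invariant of Legendrian singular tangles (relative to the boundary, respecting orders) that separates $T$ from $T'$ — for instance a Thurston–Bennequin-type or rotation-type count computed on the tangle itself, or a winding/linking quantity visible in the front of the tangle but invisible after either smoothing or after forgetting to $\SK$.

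The main obstacle I expect is step (5): the classical and resolution invariants are, by design, blind to the distinction, so I need a genuinely ``singular-Legendrian'' invariant of the tangle factor, and I must be careful that it is well-defined on the decomposition, i.e. invariant under the order-preserving contact isotopies of the separating sphere guaranteed by Theorem~\ref{thm:singularconnectedsumdecomposition}. This is where the special order-at-singular-points structure does real work — it is what makes the tangle factor well-defined enough to support such an invariant, even though neither the $\SK$ decomposition nor a naive $\LK$ decomposition would be. A secondary, more bookkeeping-type obstacle is step (2): one must be thorough that \emph{all} resolutions coincide, since missing a single resolution where they differ would make the example prove less than claimed; I would organize this by symmetry of the chosen diagrams so that the resolutions come in matched pairs and only a few genuinely distinct Legendrian links need to be identified.
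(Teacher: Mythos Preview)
Your outline correctly identifies that the construction must be explicit and that steps (1)--(4) are bookkeeping, but step (5) contains a genuine gap. You propose to distinguish the tangle factors $T$ and $T'$ by ``a Thurston--Bennequin-type or rotation-type count \dots\ or a winding/linking quantity,'' but these are precisely the classical invariants you have already arranged to agree; by design they cannot separate $T$ from $T'$. You have not named an invariant that would actually do the job, and this is the entire difficulty of the theorem. Moreover, your appeal to Theorem~\ref{thm:singularconnectedsumdecomposition} is not quite enough: that theorem says the decomposition along a \emph{given} separating sphere is well-defined up to order-preserving isotopy of that sphere, but it does not give a unique-factorization statement. A contact isotopy $L\to L'$ carries your sphere $S$ to some sphere $\phi(S)$ for $L'$, and you would still need to know that $\phi(S)$ is order-preserving isotopic to your chosen sphere $S'$ before concluding that the tangle factors match.

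The paper's proof runs the singular connected sum in the \emph{opposite} direction, and this is the missing idea. The pair is $L_e$ and its orientation-reverse $-L_e$ (degree one), for which equality of $\|\cdot\|$, of all resolutions, and of the order data is immediate or nearly so. To separate them, one does not decompose; instead one takes a \emph{further} singular connected sum $(L_e,p)\otimes (L,\mathbf{0})$ and $(-L_e,q)\otimes (L,\mathbf{0})$ with a specific degree-one $(L,\mathbf{0})$ chosen so that the results are \emph{nonsingular} Legendrian knots (both of topological type $m(6_1)$). By Theorem~\ref{thm:singularconnectedsum}, a contact isotopy $L_e\simeq -L_e$ would force these two Legendrian knots to coincide in $\LK$. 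But now one is in $\LK$, where strong invariants exist: the Chekanov--Eliashberg polynomials of the two results are $t^{-3}+t+t^3$ and $t^{-1}+2t$, hence distinct. So the singular connected sum is used not to isolate a tangle factor, but to embed the problem into $\LK$ and invoke Legendrian contact homology.
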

	
	For a given $L\in\LSK$ of degree $k$ one can obtain a double $\mathcal{D}(L)$, a Legendrian link in $\#^{k-1}(S^2\times S^1)$, by a {\em multiple singular connected sum} of $L$ with itself. 
	Thanks to the work of \cite{EN} we can assign a Legendrian contact homology algebra of $\mathcal{D}(L)$ to $L$, as an algebraic invariant of $L$.
	
	Furthermore, the resolutions can be regarded as special cases of tangle replacements, and each resolution has a unique inverse operation, called a {\em splicing}, under certain splitting conditions.
	These splicings provide full descriptions of Legendrian singular links with certain singular link types. See Theorem~\ref{thm:simple} and Corollary~\ref{cor:Linftysimple}.
	
	\subsection*{Acknowledgement} 
	We are grateful to Gabriel C. Drummand-Cole for his valuable and detailed comments on a previous draft.
	This work was supported by Center for Geometry and Physics, Institute for Basic Science (IBS-R003-D1).
	
	\section{Preliminaries}
	
	\subsection{Legendrian singular links in \texorpdfstring{$S^3$}{3-sphere}}\label{sec:variouslinks}
	Throughout this paper, we regard $S^3$ as the unit sphere in $\mathbb{C}^2$. Then the {\em standard contact structure $\xi_{std}$ on $S^3$} is given by
	\begin{align*}
		\xi_{std}&=\ker\lambda_{std};\\
		\lambda_{std}&=r_1^2d\theta_1+r_2^2d\theta_2\\
		&=x_1dy_1-y_1dx_1+x_2dy_2-y_2dx_2,
	\end{align*}
	where $z=r_1e^{i\theta_1}=x_1+i y_1,w=r_2e^{i\theta_2}=x_2+i y_2$.
	
	For convenience's sake, we frequently consider $S^3=\R^3\cup\{\infty\}$ as the one-point compactification of $\R^3$ with two contact structures {$\xi_{rot}$ and $\xi_0$} which are contactomorphic and defined as follows.
	\begin{align*}
		\xi_{rot}=\ker\alpha_{rot},\quad&\alpha_{rot}=dz+r^2d\theta=dz+xdy-ydx;\\
		\xi_0=\ker\alpha_0,\quad&\alpha_0=dz-ydx.
	\end{align*}
	
	From now on, we assume that the contact structure on $S^3$ is always co-oriented by $\lambda_{std}$.
	
	We define Legendrian singular links and describe its relation to known knot theories.
	All types of links we will consider in this article are oriented unless otherwise stated.
	
	Let $nS^1$ be a disjoint union $\coprod_{i=1}^n S^1_i$ of $n$-copies of $S^1$.
	A {\em link $K$ with $n$-components} is the oriented image of a smooth embedding $nS^1 \hookrightarrow S^3$, and a {\em singular link $K_s$ of degree $m$} is a link defined by using an immersion instead of an embedding with precisely $m$ {\em transverse double points}, called {\em singular points}.
	We denote the set of singular points by $\P(K_s)$.
	
	Now we endow $S^3$ with the standard contact structure $\xi_{std}$ described above.
	A {\em Legendrian link $L$} is a link with every tangent vector lying in the contact structure $\xi_{std}$, and a {\em Legendrian singular link $L_s$} is a singular link with the same tangency condition.
	
	We say that two (Legendrian) links $K_0$, $K_1$ are {\em equivalent} if there exists a (contact) ambient isotopy $h_t:S^3\to S^3$ such that $h_0$ is the identity and $h_1(K_0)=K_1$. We call the equivalence class a {\em (Legendrian) link type}.
	We denote by $\K$ ($\LK$) and $\SK$ ($\LSK$) the collections of (Legendrian) link types and (Legendrian) singular link types, respectively.
	
	For $K_s\in\SK$, we denote by $\mathcal{L}(K_s)$ the set of all Legendrian singular links of singular link type $K_s$.
	Conversely, we denote by $\|L_s\|$ singular link type of $L_s$.
	
	\subsection{Projections}
	Since any Legendrian isotopy of a Legendrian immersion can be assumed not to touch a designated point $\infty$ in $S^3$, we may assume that Legendrian singular links lie in
	$(\R^3,\xi_0)$ as usual for $\mathcal{LK}$.
	
	The {\em front projection $\pi_F$} and {\em Lagrangian projection $\pi_L$} are defined as the projections of $(\R^3,\xi_0)$ onto the $xz$-plane and $xy$-plane, respectively, as follows.
	\begin{align*}
		\pi_F(x,y,z)=(x,z),\quad
		\pi_L(x,y,z)=(x,y).
	\end{align*}
	Note that we are able to recover $L$ from $\pi_L(L)$ up to a shift in the $z$-coordinate or $\pi_F(L)$ by using the Legendrian condition, and
	the projections near $p\in \P(L)$ look like `$\,\vcenter{\hbox{\includegraphics[scale=1]{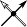}}}\,$' in the Lagrangian projection and `$\,\vcenter{\hbox{\includegraphics[scale=1]{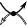}}}\,$' in the front projection. See Figure~\ref{fig:singularpoints}.
	For each singular point, we indicate a dot to avoid confusion with ordinary double points (crossings) in both the front and Lagrangian projections.
	
	\begin{figure}[ht]
		$$
		\begin{array}{c|c|c|c|c}
		\text{Front}&\;\;
		\vcenter{\hbox{\includegraphics[scale=1]{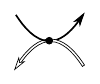}}}\;\; &\;\;
		\vcenter{\hbox{\includegraphics[scale=1]{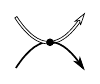}}}\;\; &\;\;
		\vcenter{\hbox{\includegraphics[scale=1]{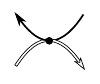}}}\;\; &\;\;
		\vcenter{\hbox{\includegraphics[scale=1]{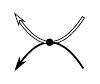}}} \\
		\hline
		\text{Lagrangian}&\;\;
		\vcenter{\hbox{\includegraphics[scale=1]{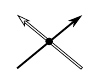}}}\;\;&\;\;
		\vcenter{\hbox{\includegraphics[scale=1]{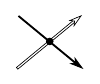}}}\;\;&\;\;
		\vcenter{\hbox{\includegraphics[scale=1]{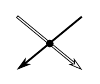}}}\;\;&\;\;
		\vcenter{\hbox{\includegraphics[scale=1]{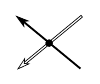}}}
		\end{array}
		$$
		\caption{Projections near a singular point}
		\label{fig:singularpoints}
	\end{figure}
	
	A {\em diagram $D\subset\R^2_{xz}$} consists of piecewise smooth closed curves in the $xz$-plane without a vertical tangency, which may have cusps with a smooth tangency condition\footnote{If $D$ is parametrized by $t$ and has a cusp at $t_0$, then $y(t_0)=\lim_{t\to t_0}\frac{z'(t)}{x'(t)}$ is well-defined and smooth near $t_0$.}.
	
	We assume further that every nontransversal double point $p$ in $D$ is parameterized like one of front projections depicted above.
	Then it is easy to see that any diagram $D$ can be realized by $\pi_F(L)$ for some $L\in\LSK$ and {\em vice versa}. Therefore we do not distinguish a front projection and a diagram unless any ambiguity occurs.
	
	A diagram $D$ is said to be {\em regular} if $D$ has no triple (or more) point, and none of its double points is a cusp.
	Note that in all possible diagrams, the set of regular diagrams are dense, and therefore for any $L\in\LSK$, we may assume that the front projection $\pi_F(L)$ is regular by perturbing $L$ slightly.
	Non-regular examples are shown in Figure~\ref{fig:forbidden}.
	
	\begin{figure}[ht]
		\centering
		\includegraphics{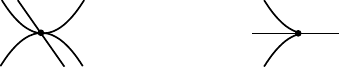}
		\caption{Examples of non-regular front projections near a singular point}
		\label{fig:forbidden}
	\end{figure}
	
	Moreover, for a given contact isotopy $\phi_t$ starting with $L$, the 1-parameter family $D_t$ of diagrams, defined by $D_t=\pi_F(\phi_t(L))$, is regular for all but finitely many $t$'s.
	Let $\{t_1,\dots,t_k\}$ be the set of such $t$'s such that at each $t_i$, there is exactly one point in $D_{t_i}$ violating the regularity.
	Then during $t_i<t<t_{i+1}$ for each $i$, the variance of $D_t$ can be regarded as the result under a plane isotopy on $\R_{xz}$, which does not produce the vertical tangency.
	On the other hand, the diagrams $D_{t_i-\epsilon}$ and $D_{t_i+\epsilon}$ for a small $\epsilon>0$ essentially differ by (a composite of) the {\em Reidemeister moves} depicted in Figure~\ref{fig:frontmove}.
	
	This follows from a result about Legendrian graphs in \cite{BI} by regarding $L$ as a Legendrian graph which has 4-valent vertices only and satisfies certain tangency conditions at each vertex.
	Conversely, at each vertex of valency 4, there is a canonical way to smooth edges and obtain two transverse arcs.
	Hence there is essentially no difference between Legendrian singular links and Legendrian 4-valent graphs.
	Note that the moves in Figure~\ref{fig:frontmove} are slightly different from those in \cite{BI} because we do not allow cusps to be double points.
	
	\begin{figure}[ht]
		\centering
		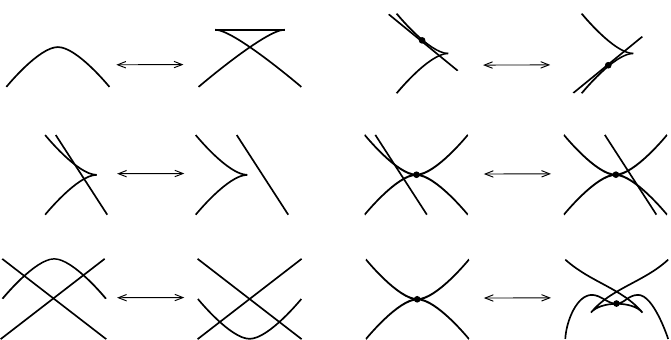
		\caption{Reidemeister moves for $\LSK$}
		\label{fig:frontmove}
	\end{figure}
	
	\begin{prop}\label{prop:frontmove}
		Let $L_1, L_2$ be Legendrian singular links. Then $L_1$ and $L_2$ are equivalent in $\LSK$ if and only if $\pi_F(L_1)$ and $\pi_F(L_2)$ are related by a sequence of plane isotopies and moves $({\rm I})\sim ({\rm VI})$ including their reflections about the $x$ and $z$-axes, depicted in Figure~\ref{fig:frontmove}.
	\end{prop}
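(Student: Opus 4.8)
The plan is to prove the two implications separately: the ``if'' direction is a routine check that each listed modification of the front lifts to a contact isotopy, while the ``only if'' direction is the substance and will be obtained by putting a connecting contact isotopy in general position and reducing to the Reidemeister-type theorem for Legendrian graphs in \cite{BI}.

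For ``if'', it suffices to realize each generating modification by a contact ambient isotopy of $(\R^3,\xi_0)$. A plane isotopy of $\R^2_{xz}$ that never introduces a vertical tangency determines a smooth family of Legendrian lifts through the relation $y=dz/dx$; this is a Legendrian isotopy (of $4$-valent graphs), and along it the slopes of the two branches at each dotted vertex vary continuously and remain distinct, so transversality and the Legendrian condition there persist. By Legendrian isotopy extension this family extends to an ambient contact isotopy. Each of the moves $(\mathrm{I})$--$(\mathrm{VI})$ and their reflections is supported in a disk, agrees with the identity near its boundary, and visibly relates the fronts of two contact-isotopic Legendrian tangles (moves $(\mathrm{I})$--$(\mathrm{III})$ are the ordinary Legendrian Reidemeister moves; for $(\mathrm{IV})$--$(\mathrm{VI})$ one reads off from the pictures that transversality at the dotted point is maintained). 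Gluing these local isotopies with the identity outside yields the global contact isotopy.

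For ``only if'', let $h_t$ be a contact ambient isotopy with $h_0=\mathrm{id}$ and $h_1(L_1)=L_2$; as usual we may assume the whole family avoids $\infty$ and work in $(\R^3,\xi_0)$, and set $D_t=\pi_F(h_t(L_1))$. By the discussion preceding the proposition, after a $C^\infty$-small perturbation of $h_t$ rel endpoints the diagram $D_t$ is regular for all $t$ outside a finite set $0<t_1<\dots<t_k<1$, on each complementary interval $D_t$ varies by a plane isotopy with no vertical tangency, and at each $t_i$ exactly one regularity failure occurs. Viewing each $L_j$ as a Legendrian $4$-valent graph whose vertices are the dotted points (with the extra requirement that the two branches there be Legendrian and transverse), the possible failures along a generic path are: a triple point, a tangency between two non-cuspidal strands, a non-cuspidal strand crossing a cusp, and an event at a vertex (a strand passing across a vertex, or an exchange or local rotation of the two branches at a vertex). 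The first three, occurring away from vertices, contribute moves $(\mathrm{I})$--$(\mathrm{III})$ exactly as for Legendrian links; the last contributes moves $(\mathrm{IV})$--$(\mathrm{VI})$.

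The main obstacle is the bookkeeping in this last step. The move list in \cite{BI} is set up allowing a cusp to coincide momentarily with a double point or with a vertex, producing ``cusp-at-crossing'' and ``cusp-at-vertex'' transitions that are not on our list (our diagrams forbid cusps from being double points, and likewise at vertices). I would dispose of these by a general-position argument: the locus of fronts in which a cusp meets a double point or a vertex has codimension at least two in the relevant space of fronts, so a generic path misses it, at the (acceptable) cost of possibly replacing a single transition from \cite{BI} by a composite of several of ours. One then verifies, move by move in a finite table, that every remaining transition is a composite of plane isotopies and moves $(\mathrm{I})$--$(\mathrm{VI})$ together with their reflections about the $x$- and $z$-axes. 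A small additional point to confirm is that none of these transitions converts a transverse double point into an ordinary crossing, so that the ``dotted'' decoration, hence the class in $\LSK$, is respected throughout; this is immediate from Figures~\ref{fig:singularpoints} and \ref{fig:forbidden}.
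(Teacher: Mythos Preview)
Your ``if'' direction and the general-position setup of the ``only if'' direction are fine, but the key codimension claim is wrong, and this is precisely the point where the paper does real work.

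At a singular point $p$ the two branches share the same $y$-coordinate, hence the same front slope, so in the front they meet \emph{tangentially}. The local front picture near $p$ is governed by the pair of tangent directions $v_1,v_2\in\xi_p$. A branch acquires a cusp exactly when its tangent direction becomes parallel to $\partial_y$; along a contact isotopy, each $v_i$ traces a path in $\mathbb{RP}^1$, and the condition $v_i\parallel\partial_y$ cuts out a single point there. Thus ``a cusp at the vertex'' is a \emph{codimension-one} event in a one-parameter family, not codimension two. Generic paths do meet it, and this is exactly what produces the extra moves $(\mathrm{IV}_a),(\mathrm{IV}_b)$ (and their $(\mathrm{VI}_*)$ variants) from \cite{BI} that pass through the non-regular configurations of Figure~\ref{fig:forbidden}. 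You cannot perturb them away.

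What the paper actually does is accept that such forbidden moves appear in the \cite{BI} sequence and then eliminate them algebraically: since both endpoint diagrams are regular near each singular point, forbidden moves at a fixed $p$ must occur in pairs; moreover (Lemmas~\ref{lem:forbiddenmove1} and~\ref{lem:forbiddenmove2}) a forbidden move commutes past any regular move not involving $p$, and two consecutive forbidden moves at the same $p$ either cancel or combine into the allowed move $(\mathrm{IV})$. Induction on the number of forbidden moves then yields a sequence using only $(\mathrm{I})$--$(\mathrm{VI})$. You should replace your general-position paragraph with this commutation-and-cancellation argument.
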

	\begin{proof}
		We refer to the result for Legendrian graphs in \cite{BI}.
		Then two types of moves in \cite[Figure~9]{BI} involve the forbidden front projection as shown in Figure~\ref{fig:4a4b}.
		We call these moves {\em forbidden moves}.
		Note that $\rm (VI_*)$ is a composition of $\rm(VI)$ and $\rm(IV_*)$ for each $*\in\{a,b\}$. Hence we need not consider the moves $\rm(VI_*)$.
		\begin{figure}[ht]
			\centering
			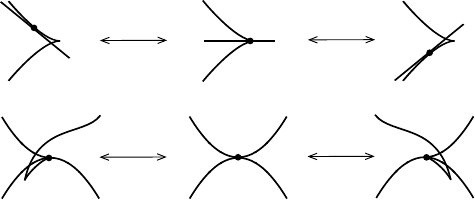	
			\caption{Moves $\rm (IV_*)$ and $\rm(VI_*)$}
			\label{fig:4a4b}
		\end{figure}
		
		Then we have the following lemmas which are easy observations whose proofs we omit.
		\begin{lem}\label{lem:forbiddenmove1}
			Let $D\stackrel{F_1}{\longrightarrow}D'\stackrel{F_2}{\longrightarrow}D''$ be a sequence of diagrams connected by forbidden moves $F_i\in\{\rm(IV_a),(IV_b)\}$ at singular points $p_i$.
			Then $F_2\circ F_1$ is either
			\begin{enumerate}
				\item the identity if $p_1=p_2$ and $F_1=F_2$;
				\item $\rm(IV)$ if $p_1=p_2$ but $F_1\neq F_2$;
				\item $F_1\circ F_2$ if $p_1\neq p_2$.
			\end{enumerate}
		\end{lem}
		
		\begin{lem}\label{lem:forbiddenmove2}
			Let $D\stackrel{F}{\longrightarrow}D'\stackrel{R}{\longrightarrow}D''$ be a sequence of diagrams connected by a forbidden move $F\in\{\rm(IV_a),(IV_b)\}$ at singular points $p$ and a regular move $R\in\{\rm(I),\dots,(VI)\}$.
			Suppose $D$ is regular near $p$.
			Then $D''$ is non-regular near $p$, and $R\circ F$ is $F\circ R$.
		\end{lem}
		
		Let $D_i=\pi_F(L_i)$. Then there is a sequence $\mathbf{R}$ of Reidemeister moves $\{\rm(I),\dots,(VI),(IV_a),(IV_b)\}$, which transforms $D_1$ into $D_2$.
		We use the induction on the number of forbidden moves in $\mathbf{R}$.
		
		Let $F_1$ be the first occurrence of a forbidden move in $\mathbf{R}$ involving a singular point $p$. 
		Then since both $D_1$ and $D_2$ are regular near $p$, there must be another occurrence of a forbidden move at $p$ in $\mathbf{R}$ after $F_1$.
		Let $F_2$ be the second one. Then by definition, there is no move involving a singular point $p$ between $F_1$ and $F_2$ in $\mathbf{R}$.
		Therefore by Lemma~\ref{lem:forbiddenmove1}~(3) and \ref{lem:forbiddenmove2}, $F_1$ moves forward in $\mathbf{R}$ until it meets $F_2$. Then by Lemma~\ref{lem:forbiddenmove1}~(1) or (2), they are cancelled or become a regular move $\rm(IV)$.
		Hence the number of forbidden moves decreases by 2, and the proposition follows by induction.
	\end{proof}
	
	\subsection{Resolutions}\label{sec:resolution}
	In $\SK$, a resolution is the standard way to reduce the number of singular points, and eventually to obtain nonsingular links.
	By virtue of Proposition~\ref{prop:frontmove}, links in $\LSK$ are described diagrammatically, and so are resolutions as follows.
	
	\begin{defn}
		Let $L\in\LSK$ and $p\in\P(L)$. For $\eta\in\{+,-,0,\infty\}$,
		an {\em $\eta$-resolution $R_\eta(L,p)$ of $L$ at $p$} is defined by replacing a small neighborhood of $p$ in $\pi_F(L)$ with the corresponding diagram $R_\eta$ depicted in Figure~\ref{fig:resolutions}.
	\end{defn}
	
	\begin{figure}[ht]
		\centering
		\begin{align*}
			\begin{array}{c|c|c|c|c}
				(L,p) & R_+(L,p) & R_-(L,p) & R_0(L,p) & R_\infty (L,p)\\
				\hline
				\vcenter{\hbox{\includegraphics[scale=1]{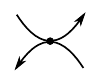}}}&
				\vcenter{\hbox{\includegraphics[scale=1]{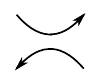}}}&
				\vcenter{\hbox{\includegraphics[scale=1]{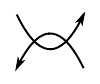}}}&
				\vcenter{\hbox{\includegraphics[scale=1]{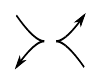}}}&
				\vcenter{\hbox{\includegraphics[scale=1]{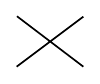}}}\\
				\hline
				\vcenter{\hbox{\includegraphics[scale=1]{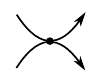}}}&
				\vcenter{\hbox{\includegraphics[scale=1]{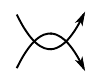}}}&
				\vcenter{\hbox{\includegraphics[scale=1]{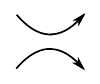}}}&
				\vcenter{\hbox{\includegraphics[scale=1]{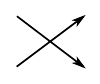}}}&
				\vcenter{\hbox{\includegraphics[scale=1]{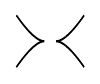}}}\\
			\end{array}
		\end{align*}
		\caption{$R_+, R_-, R_0$ and $R_\infty$ in the front projection}
		\label{fig:resolutions}
	\end{figure}
	
	The well-definedness for all $R_\eta$ under ambient isotopy also follows from the fact that 
	the push-forwards of each Reidemeister move $\rm (I)\sim(VI)$ along any resolution $R_\eta$ are reduced to (sequences of) Reidemeister moves ${\rm (I)\sim(III)}$.
	Therefore $R_\eta$ does not depend on the diagram but only on the $\LSK$ type.
	
	Since the resolutions $R_\pm$ and $R_0$ are local replacement of oriented diagrams, the order of taking these resolutions does not matter. 
	Note that $R_\pm$ preserves the number of components but $R_0$ increases or decreases the number of components by $1$.
	On the other hand, $R_\infty$ does not induce an orientation and that it may preserve the number of components or decrease it by $1$.
	
	Let $\mathcal{R}(L)\subset\LK$ be the set of {\em full resolutions} consisting of Legendrian nonsingular links obtained from $L$ by resolving {\em all} singular points via 3-ways $R_\pm$ and $R_0$. Indeed $\mathcal{R}(L)$ is indexed by $\mathcal{I}=\{f:\P(L)\to\{0,+,-\}\}$.

	\subsection{Stabilizations and classical invariants}
	
	For $L\in\LSK$ and a nonsingular point $p\in L\setminus\P(L)$, the {\em positive and negative stabilizations $S_\pm(L,p)$ of $L$ at $p$} are Legendrian singular links $S_\pm(L,p)$
	defined by the diagram replacement in the front projection as Figure~\ref{fig:stabilizations}. Note that $\|S_{\pm}(L,p)\|=\|L\|$ by definition.
	
	\begin{figure}[ht]
		\centering
		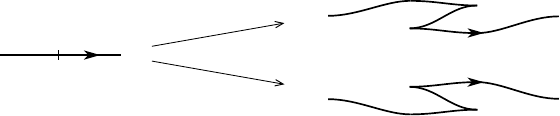
		\caption{Positive and negative stabilizations}
		\label{fig:stabilizations}
	\end{figure}
	
	\begin{lem}\cite{FT}\label{lem:mountain}
		For a nonsingular link $K\in\K$, two Legendrian links $L_1,L_2\in\mathcal{L}(K)$ are equivalent up to positive and negative stabilizations.
	\end{lem}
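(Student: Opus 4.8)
The plan is to reconstruct the argument of Fuchs and Tabachnikov \cite{FT} by playing two Reidemeister calculi against one another. On the Legendrian side, Proposition~\ref{prop:frontmove} specializes, in the case of no singular points, to the classical Legendrian front Reidemeister theorem: $L_1$ and $L_2$ are equivalent in $\LK$ exactly when $\pi_F(L_1)$ and $\pi_F(L_2)$ are related by planar isotopy and a finite sequence of the moves of Figure~\ref{fig:frontmove} not involving a singular point. On the topological side, $\|L_1\|=\|L_2\|$ means that the underlying topological diagrams, obtained by smoothing all cusps, are related by planar isotopy and the ordinary Reidemeister moves $\mathrm{I}$, $\mathrm{II}$, $\mathrm{III}$; fix such a topological sequence, beginning at the smoothing of $\pi_F(L_1)$ and ending at the smoothing of $\pi_F(L_2)$.

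The crux is to replay this topological sequence, move by move, on a single running Legendrian front $\mathcal{E}$ initialized at $\pi_F(L_1)$. Topological $\mathrm{II}$ and $\mathrm{III}$ moves, and any rearrangement forcing a cusp past a crossing or past another cusp, are realized directly by the Legendrian front moves above, after first sliding the relevant cusps and zigzags along their strands (a Legendrian isotopy). A topological $\mathrm{I}$ move introduces a kink --- one crossing together with two cusps --- which, according to the crossing sign and the local orientation, is either Legendrian isotopic to the unmodified strand or Legendrian isotopic to a composition of a positive and a negative stabilization; in the latter case one inserts the appropriate zigzag(s) into $\mathcal{E}$ and then proceeds by front moves. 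The inserted zigzags are never removed, so at the end $\mathcal{E}$ is, as a Legendrian front, $\pi_F(L_2)$ together with finitely many extra zigzags, hence equivalent to a finite sequence of positive and negative stabilizations applied to $L_2$; and since inserting a zigzag commutes with Legendrian isotopy up to transporting the insertion point, $\mathcal{E}$ is equally a finite sequence of positive and negative stabilizations applied to $L_1$. This yields the lemma. When $K$ has several components one runs the argument componentwise, as stabilizations are supported near a point.

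The main obstacle is the case analysis inside the crux step: verifying that every topological Reidemeister move between two genuine Legendrian fronts is a composite of Legendrian front moves and zigzag insertions, the delicate cases being precisely those in which cusps interact with the move performed. This is the content of \cite{FT}; alternatively one can extract it from the Legendrian-graph moves of \cite{BI} used in Proposition~\ref{prop:frontmove}, observing that at the level of underlying topological diagrams those moves generate all of $\mathrm{I}$, $\mathrm{II}$, $\mathrm{III}$ modulo the ambiguity that a stabilization absorbs. The one point needing care is the orientation bookkeeping --- deciding which inserted zigzag is a positive and which a negative stabilization --- and it is immaterial for the conclusion, since stabilizations of both signs are permitted.
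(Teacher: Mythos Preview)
The paper does not supply a proof of this lemma: it is stated with the citation \cite{FT} and used as a black box. There is therefore no ``paper's own proof'' to compare against; the authors simply import the result of Fuchs--Tabachnikov.

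Your sketch is a reasonable outline of the original Fuchs--Tabachnikov argument, and it is the standard one: realize a smooth isotopy between $\|L_1\|$ and $\|L_2\|$ as a sequence of topological Reidemeister moves, then lift each move to a sequence of Legendrian front moves at the cost of inserting zigzags (stabilizations) whenever a topological Reidemeister~I cannot be realized Legendrianly. The one point I would tighten is the claim that ``topological $\mathrm{II}$ and $\mathrm{III}$ moves \ldots\ are realized directly by the Legendrian front moves'': this is not literally true for an arbitrary local configuration, since a topological $\mathrm{II}$ or $\mathrm{III}$ move may require first creating a pair of cusps (i.e.\ inserting an $S_+S_-$ pair) to get the strands into a position where the Legendrian move applies. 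So stabilizations can enter at every stage, not only at Reidemeister~I; your parenthetical about ``sliding cusps and zigzags'' does not quite cover this. This is exactly the ``case analysis'' you flag as the main obstacle, and it is where the actual work in \cite{FT} lies. As a reconstruction of a cited result your outline is adequate, but if you were writing this as a self-contained proof you would need to carry out that analysis rather than defer to it.
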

	On the contrary, it is not true that any two Legendrian singular links sharing the same singular link type can be connected by a sequence of positive and negative stabilizations in general. The corresponding result for $\LSK$ will be given in Proposition~\ref{prop:singconn}.
	
	For $L\in\LSK$, there are two {\em classical invariants}, which are generalizations of those in $\LK$ (see \cite{E}), and can be used to separate $\LSK$ as follows.
	For convenience sake, we label the components of $L$ as $L^1,\dots, L^n$.

	The {\em total Thurston-Bennequin number} $tb(L)$ measures the twisting of the contact structure along $L$, such that
	$tb(L)$ is a linking number $lk(L,L^+)$ with the positive push-off $L^+$, and therefore is invariant under contact isotopy.
	Indeed, each singular point of $L$ contributes $1$ or $-1$ to $tb(L)$ according to the orientation.
	Practically, it can be computed from $\pi_F(L)$ as
	\begin{align*}
		tb(L) =&   \#\left\{
		\vcenter{\hbox{\includegraphics[scale=1]{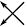}}},
		\vcenter{\hbox{\includegraphics[scale=1]{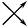}}},
		\vcenter{\hbox{\includegraphics[scale=1]{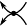}}}, 
		\vcenter{\hbox{\includegraphics[scale=1]{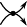}}}\right\}
		- \#\left\{         
		\vcenter{\hbox{\includegraphics[scale=1]{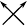}}},
		\vcenter{\hbox{\includegraphics[scale=1]{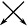}}},
		\vcenter{\hbox{\includegraphics[scale=1]{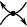}}}, 
		\vcenter{\hbox{\includegraphics[scale=1]{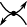}}},
		\vcenter{\hbox{\includegraphics[scale=1]{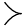}}}\right\}.
	\end{align*}
	
	Moreover, one can consider the Thurston-Bennequin number for each component $L^i$ as follows.
	$$\mathbf{tb}(L)=(tb(L^1),\dots,tb(L^n))\in\Z^n.$$
	It is easy to check that
	$$
	tb(L)=\sum_i tb(L^i) + \sum_{i<j} (lk(L^i, L^{j+}) + lk(L^i, L^{j-})).
	$$
	
	Notice that if $L$ is nonsingular, then both $lk(L^i, L^{j+})$ and $lk(L^i, L^{j-})$ are equal to $lk(L^i, L^j)$. However, these two linking numbers are different in general if $L$ is singular.
	
	Now we fix the trivialization of the contact structure $(\R^3,\xi_0)$ given by the Lagrangian projection.
	Then the {\em componentwise rotation number} $\mathbf{r}(L)=(r(L^1),\dots,r(L^n))\in\Z^n$ is also defined as the $n$-tuple of winding numbers $r(L^i)$ of tangent vectors of $L^i$ in the contact plane.
	In the front projection
	\begin{align*}
		r(L^i)=\frac{1}{2}\left(\#\{
		\vcenter{\hbox{\includegraphics{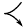}}},
		\vcenter{\hbox{\includegraphics{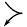}}}
		\} - \#\{
		\vcenter{\hbox{\includegraphics{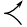}}},
		\vcenter{\hbox{\includegraphics{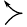}}}
		\}\right).
	\end{align*}
	
	We also define the {\em total} rotation number $r(L)$ by the sum of $r(L^i)$'s.
	Then it is easy to check that
	$$tb(S_\pm(L,p))=tb(L)-1,\quad r(S_\pm(L,p))=r(L)\pm 1,$$
	and for $\eta\in\{0,+,-\}$,
	$$
	tb(R_\eta(L,p))=tb(L)+\eta\cdot 1,\quad r(R_\eta(L,p))=r(L).
	$$
	
	\section{A hierarchy of invariants}
	
	\subsection{Legendrian simplicity}
	Recall that a nonsingular knot $K\in\K$ is {\em Legendrian simple} if $\mathcal{L}(K)$ are classified by $tb$ and $r$, and
	there are several knot types which are Legendrian simple. For example, the unknot, torus knots, the figure-8 knot $4_1$ are Legendrian simple \cite{EF, EH1}.
	For both singular and nonsingular links, it is natural to consider $\mathbf{tb}$ and $\mathbf{r}$, which are finer than $tb$ and $r$.
	Moreover, we can extend the notion of Legendrian simplicity as follows.
	\begin{defn}
		Let $I$ be a set of invariants of $\LSK$, and $K\in\SK$. We say that $K$ is {\em $I$-simple} if Legendrian singular links in $\mathcal{L}(K)$ are classified by the invariants in $I$.
	\end{defn}
	Then by definition, any Legendrian simple knot is $\{tb, r\}$-simple, and any split link with Legendrian simple components is not $\{tb,r\}$ but $\{\mathbf{tb},\mathbf{r}\}$-simple.
	
	The stabilizations $S_{\pm}$, defined in the previous section, obviously depend on where the kinks will be attached. However in $\LSK$, neither $\mathbf{tb}$ nor $\mathbf{r}$ have sufficient information to know that. A typical example, which is $\{\mathbf{tb},\mathbf{r}\}$-nonsimple,
	is the simplest singular knot $K_0$ of degree 1 with the singular point $\mathbf{0}$.
	Let $L_0\in\mathcal{L}(K_0)$ be the simplest Legendrian singular knot as follows.
	$$
	L_0=\vcenter{\hbox{\scriptsize
\begingroup%
  \makeatletter%
  \providecommand\color[2][]{%
    \errmessage{(Inkscape) Color is used for the text in Inkscape, but the package 'color.sty' is not loaded}%
    \renewcommand\color[2][]{}%
  }%
  \providecommand\transparent[1]{%
    \errmessage{(Inkscape) Transparency is used (non-zero) for the text in Inkscape, but the package 'transparent.sty' is not loaded}%
    \renewcommand\transparent[1]{}%
  }%
  \providecommand\rotatebox[2]{#2}%
  \ifx\svgwidth\undefined%
    \setlength{\unitlength}{80bp}%
    \ifx\svgscale\undefined%
      \relax%
    \else%
      \setlength{\unitlength}{\unitlength * \real{\svgscale}}%
    \fi%
  \else%
    \setlength{\unitlength}{\svgwidth}%
  \fi%
  \global\let\svgwidth\undefined%
  \global\let\svgscale\undefined%
  \makeatother%
  \begin{picture}(1,0.327625)%
    \put(0,0){\includegraphics[width=\unitlength]{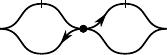}}%
    \put(0.2,0.205){\color[rgb]{0,0,0}\makebox(0,0)[lb]{\smash{$p$}}}%
    \put(0.7,0.205){\color[rgb]{0,0,0}\makebox(0,0)[lb]{\smash{$q$}}}%
    \put(0.45,0.035){\color[rgb]{0,0,0}\makebox(0,0)[lb]{\smash{$\mathbf{0}$}}}%
  \end{picture}%
\endgroup%
}}\in\mathcal{L}(K_0),\quad K_0=\vcenter{\hbox{\scriptsize
\begingroup%
  \makeatletter%
  \providecommand\color[2][]{%
    \errmessage{(Inkscape) Color is used for the text in Inkscape, but the package 'color.sty' is not loaded}%
    \renewcommand\color[2][]{}%
  }%
  \providecommand\transparent[1]{%
    \errmessage{(Inkscape) Transparency is used (non-zero) for the text in Inkscape, but the package 'transparent.sty' is not loaded}%
    \renewcommand\transparent[1]{}%
  }%
  \providecommand\rotatebox[2]{#2}%
  \ifx\svgwidth\undefined%
    \setlength{\unitlength}{40.8125bp}%
    \ifx\svgscale\undefined%
      \relax%
    \else%
      \setlength{\unitlength}{\unitlength * \real{\svgscale}}%
    \fi%
  \else%
    \setlength{\unitlength}{\svgwidth}%
  \fi%
  \global\let\svgwidth\undefined%
  \global\let\svgscale\undefined%
  \makeatother%
  \begin{picture}(1,0.41164055)%
    \put(0,0){\includegraphics[width=\unitlength]{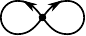}}%
    \put(0.58740219,0.1397427){\color[rgb]{0,0,0}\makebox(0,0)[lb]{\smash{$\mathbf{0}$}}}%
  \end{picture}%
\endgroup%
}}.
	$$
	Then two stabilizations at the marked points $p$ and $q$ in $L_0$ shown in Figure~\ref{fig:vertex_passing} are distinguished by $\{\mathbf{tb},\mathbf{r}\}$ after $0$-resolution, but never distinguished as they are. Note that this phenomena does not occur in $\LK$.
	
	\begin{figure}[ht]
		$$
		S_+^2(L_0,p)=\vcenter{\hbox{\includegraphics{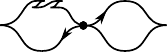}}}\neq
		\vcenter{\hbox{\includegraphics{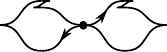}}}=S_+(S_+(L_0,p),q)
		$$
		\caption{Two different positive stabilizations of $L_0$ sharing $\{\mathbf{tb}, \mathbf{r}\}$}
		\label{fig:vertex_passing}
	\end{figure}
	
	Therefore it is natural to consider $\mathbf{tb}$ and $\mathbf{r}$ for all possible resolutions
	$\mathcal{R}(L)$, that is, $\{\mathbf{tb}(\mathcal{R}),\mathbf{r}(\mathcal{R})\}$.
	If the link types $\left\|\mathcal{R}(L)\right\|$ of resolutions of $L$ are nonsimple, however, 
	$\{\mathbf{tb}(\mathcal{R}),\mathbf{r}(\mathcal{R})\}$ can not capture the whole Legendrian information of $L$. So one could consider the set of Legendrian link types $\mathcal{R}(L)$ as invariants.
	Note that $\mathcal{R}$ is the strongest among all invariants mentioned above, since all nonsingular links are tautologically $\{\mathcal{R}\}$-simple.
	
	At first glance, all Legendrian link types in $\mathcal{R}(L)$ together with the singular link type $\|L\|$ seem to recover $L$ itself, but this is not true.
	For example, we will prove later that $K_0$ is $\{\mathcal{R}\}$-nonsimple.
	
	\subsection{Orders, markings, and flips}
	We discuss the distinctive properties of the singular points of Legendrian singular links.
	The {\em standard sphere} $S_{std}\subset \R^3$ is defined by
	$$
	S_{std}=\{(r,\theta,z)\,|\,r^4+4z^2=1\}
	$$
	in cylindrical coordinates.\footnote{The reason why we use this standard sphere will be explained in Appendix \ref{sec:projection}.} We denote by $B_{std}$ the inside of $S_{std}$, and call it the {\em standard 3-ball}.
	
	\begin{lem}\label{lem:Darboux}
		Let $L\in\LSK$ and $p\in \P(L)$.
		There exists a neighborhood $B_p\subset (S^3,\xi_{std})$ of $p$ and a contactomorphism $\phi_p$ between pairs of contact $3$-balls with co-orientation and oriented arcs such that
		$$
		\phi_p:(B_p,B_p\cap L)\to (B_{std},I_x\cup I_y)
		$$
		where $I_x=B_{std}\cap(x{\text-axis})$, and $I_y=B_{std}\cap(y{\text-axis})$.
	\end{lem}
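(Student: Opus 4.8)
The plan is to prove this as a singular-point version of the Darboux–Weinstein neighborhood theorem, reducing everything to a standard-model computation. First I would recall that at a singular point $p$ the two branches of $L$ have tangent directions $v_1, v_2$ that together span the contact plane $\xi_{std,p}$ (this is exactly the transversality hypothesis built into the definition of a Legendrian singular link). So I would begin by choosing a linear symplectic identification of $(\xi_{std,p}, d\lambda_{std}|_{\xi_{std,p}})$ with the standard symplectic $\R^2$ carrying the $x$- and $y$-axes, arranged so that $v_1$ maps to the $x$-direction and $v_2$ to the $y$-direction, and so that the chosen co-orientation of $\xi_{std}$ (the one given by $\lambda_{std}$, with which we have fixed $S^3$ to be co-oriented) corresponds to the co-orientation of the model. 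This is a purely linear-algebra step: the group of co-orientation-preserving linear contact automorphisms of a contact hyperplane acts transitively enough on ordered transverse pairs of lines to realize this, and the only obstruction would be if the required map reversed the co-orientation — which it does not, by our normalization. The subtlety worth flagging here (and the reason the paper later devotes Appendix~\ref{sec:projection} to the shape of $S_{std}$) is that we must keep track of the co-orientation throughout, since that is precisely the datum that makes the \emph{order} at $p$ well-defined.

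Next I would invoke the standard Darboux theorem in the form: any point of any contact $3$-manifold has a neighborhood contactomorphic to a neighborhood of the origin in $(\R^3,\xi_0)$, and one may prescribe the $1$-jet of the contactomorphism at that point. Applying this with the linear data chosen in the previous paragraph, I get a contactomorphism $\psi$ from a neighborhood $U_p$ of $p$ onto a neighborhood of $\mathbf{0}\in(\R^3,\xi_0)$, sending $p\mapsto\mathbf 0$, matching co-orientations, and with $d\psi_p$ sending the two tangent lines of $L$ at $p$ to the $x$- and $y$-axes respectively, compatibly with orientations. Now $\psi(L\cap U_p)$ is a pair of Legendrian arcs through $\mathbf 0$ tangent to the coordinate axes. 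The remaining task is to straighten these two arcs to the actual coordinate axes by a further compactly-supported contact isotopy fixing $\mathbf 0$ and the co-orientation. This is where the bulk of the work sits: a Legendrian arc in $(\R^3,\xi_0)$ through the origin tangent to (say) the $x$-axis is, in the front projection $\pi_F$, a curve tangent to the $x$-axis at the origin with no vertical tangency, and one straightens it by a front-plane isotopy in the $xz$-plane that is the identity near the boundary; lifting such an isotopy of fronts to a contact isotopy of $(\R^3,\xi_0)$ is routine. One does the two branches simultaneously — they are disjoint away from $\mathbf 0$, so their straightening isotopies can be taken with disjoint supports near $\mathbf 0$, or combined using a partition of unity — to arrive at a contactomorphism onto a neighborhood of $\mathbf 0$ sending $L$ to $(x\text{-axis})\cup(y\text{-axis})$.

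Finally I would shrink: intersect the image with the interior of a scaled copy of $B_{std}$, pull back to get $B_p\subset U_p$, and rescale the target (contact structures are preserved under the conformal scaling $(x,y,z)\mapsto(\lambda x,\lambda y,\lambda^2 z)$ of $\xi_0$) so that the image is exactly $B_{std}$ and the arcs become $I_x$ and $I_y$. The resulting map is the desired $\phi_p$, and by construction it respects the co-orientation and the arc orientations. I expect the main obstacle to be purely bookkeeping rather than conceptual: making sure that at no stage do we silently flip the co-orientation, and that the straightening isotopies of the two branches genuinely interfere only at $\mathbf 0$ where they agree — both points are handled by working in the front projection where the co-orientation is visible and the two branches have prescribed transverse slopes at the puncture. (Alternatively, one can phrase the straightening step via the parametric Legendrian neighborhood theorem applied to the two Legendrian arcs and invoke the $h$-principle background already cited in the introduction, but the front-projection argument is more elementary and self-contained.)
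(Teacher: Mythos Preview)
Your overall strategy --- Darboux chart followed by a straightening isotopy --- is the same as the paper's, and the co-orientation bookkeeping you flag is exactly the right concern. But the straightening step as written has a genuine gap. You work in $(\R^3,\xi_0)$ and propose to straighten each branch to its target axis via a front-plane isotopy in the $xz$-plane. This is fine for the branch tangent to the $x$-axis, but it breaks for the branch tangent to the $y$-axis: a Legendrian arc through $\mathbf{0}$ with tangent $(0,1,0)$ has $x'(0)=z'(0)=0$, so its front projection has a cusp (or worse) at the origin, while the target $y$-axis projects under $\pi_F$ to a single point. You cannot ``straighten'' a cusp to a point by a plane isotopy of the $xz$-plane, so the front-projection argument simply does not apply to the second branch. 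The two axes are not on equal footing for $\pi_F$ in $(\R^3,\xi_0)$, and your ``(say) the $x$-axis'' hides this asymmetry. Relatedly, the ``disjoint supports near $\mathbf 0$'' claim is not right either: both straightening isotopies must move points arbitrarily close to $\mathbf 0$, so their supports necessarily overlap there.

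The paper avoids both problems by working in $(\R^3,\xi_{rot})$, where the rotational symmetry puts the $x$- and $y$-axes on the same footing. After the Darboux chart, the two arcs are parametrized in cylindrical coordinates as $\gamma_i(t)=(r_i(t),\theta_i(t),z_i(t))$ with $r_i'>0$, $\theta_1(0)=0$, $\theta_2(0)\in(0,\pi)$ (this last interval is exactly where the co-orientation enters and fixes which branch is which). The straightening is then the single explicit Legendrian isotopy
\[
h_s(\gamma_i(t))=\bigl(r_i(t),\,(1-s)\theta_i(t)+s\cdot\theta_i^{\mathrm{target}},\,(1-s)z_i(t)\bigr),
\]
with targets $0$ and $\pi/2$, which moves both branches simultaneously to $I_x\cup I_y$ and then extends to an ambient contact isotopy. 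If you want to repair your argument rather than switch models, you would need to replace the front-projection step for the second branch by something else --- e.g.\ the Lagrangian projection, or an explicit formula --- and then argue carefully that the two isotopies can be combined near the shared endpoint.
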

	
	\begin{proof}
		By the Darboux theorem \cite[Theorem 2.5.1]{Ge}, there exists a neighborhood $U$ of $p$ 
		and a contactomorphism $\phi_0$ between $(U,U\cap L)$ and $(V,V\cap\phi_0(L))$ such that $\phi_0(p)=\mathbf{0}\in\R^3$ is a singular point. 
		We may assume that $V\cap\phi_0(L)$ is connected by choosing a small $U$. 
		
		We now parametrize $V\cap\phi_0(L)$ into two curves $\gamma_1(t)=(r_1(t),\theta_1(t),z_1(t))$, and $\gamma_2(t)=(r_2(t),\theta_2(t),z_2(t)),\ -\epsilon\leq t\leq\epsilon$ which match the orientation of $\phi_0(L)$ with the following conditions:
		\begin{enumerate}
			\item $\gamma_1(0)=\gamma_2(0)=0\in\R^3$;
			\item $r_1'(t)>0$, $r_2'(t)>0$ for $t\in(-\epsilon,\epsilon)$;
			\item $\theta_1(0)=0$, $\theta_2(0)\in(0,\pi)$;
			\item $|\theta_1(t)|<\frac{\delta}{3}$, $|\theta_2(t)-\theta_2(0)|<\frac{\delta}{3}$ where $\delta=\theta_2(0)-\theta_1(0)$.
		\end{enumerate}
		The conditions (2) and (4) are guaranteed by taking a sufficiently small neighborhood $U$ of $p$ and condition (3) is possible by the rotational symmetry of the contact structure $\xi_{rot}$. Note that conditions (2) and (3) determine the choice of $\gamma_1$ and $\gamma_2$.
		There is a Legendrian singular isotopy $h_s$, $s\in[0,1]$ which satisfies
		\begin{align*}
			h_s|_{\gamma_1(t)}&=(r_1(t),(1-s)\theta_1(t),(1-s)z_1(t)),\\
			h_s|_{\gamma_2(t)}&=(r_2(t),(1-s)\theta_2(t)+s\pi/2,(1-s)z_2(t)).
		\end{align*}
		and hence sends $\gamma_1(t)$ to $(r_1(t),0,0)$ and $\gamma_2(t)$ to $(r_2(t),\frac{\pi}{2},0)$ simultaneously. Let $\phi_1$ be a contact isotopy of $(\R^3,\xi_{rot})$ which realizes $h_t$. Then we may assume that $B_{std}\subset \phi_1(V)$.
		Consequently $B_p=\phi_0^{-1}\circ\phi_1^{-1}(B_{std})$ and $\phi_p=\phi_1\circ\phi_0|_{B_p}$ satisfies the desired condition.
	\end{proof}
	
	We call $B_p$ a {\em standard neighborhood} of $p$ and identify it with $B_{std}$ via $\phi_p$. Then $S_{std}\cap L$ consists of $\{\mathbf{0}_x,\mathbf{0}_y,-\mathbf{0}_x,-\mathbf{0}_y\}$, where $\mathbf{0}_x$ and $\mathbf{0}_y$ are the unit vectors along $x$ and $y$-axes.
	We simply denote $\phi_p^{-1}(\pm \mathbf{0}_x), \phi_p^{-1}(\pm \mathbf{0}_y)$ by ${\pm}p_x, \pm p_y$, which we collectively call the {\em nearby points} at $p$.
	Notice that the nearby points are well-defined up to reparametrization of $L$, which can be regarded as isotopy on the domain $nS^1$ of $L$ and thus safely ignored.
	
	\begin{defn}\label{def:order}
		Let $L\in\LSK$ and $B_p$ be a standard neighborhood of $p\in \P(L)$.
		An {\em order $\sigma(L,p)$} of $L$ at $p$ is a quadruple of nearby points of $p$ given by 
		$$
		\sigma(L,p)=(p_x,p_y,-p_x,-p_y).
		$$
	\end{defn}
	
	\begin{figure}[ht]
		$$
		\begin{array}{c|c|c|c|c}
		\text{Front}&\;\;
		\vcenter{\hbox{\scriptsize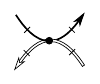}}\;\; &\;\;
		\vcenter{\hbox{\scriptsize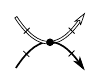}}\;\; &\;\;
		\vcenter{\hbox{\scriptsize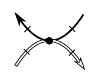}}\;\; &\;\;
		\vcenter{\hbox{\scriptsize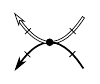}} \\
		
		\hline
		\text{Lagrangian}&\;\;
		\vcenter{\hbox{\scriptsize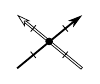}}\;\;&\;\;
		\vcenter{\hbox{\scriptsize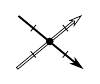}}\;\;&\;\;
		\vcenter{\hbox{\scriptsize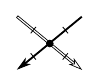}}\;\;&\;\;
		\vcenter{\hbox{\scriptsize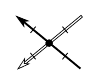}}
		\end{array}
		$$
		\caption{Projections near a singular point p} 
		\label{fig:nearbypoints}
	\end{figure}
	
	Since the contact structure $\xi_{std}$ is co-oriented, any co-orientation preserving contactomorphism $\phi:(S^3,L_1)\to(S^3,L_2)$ gives bijections between not only singular points but also nearby points up to contact isotopy. That is, 
	$$
	\phi(\pm p_x)=\pm q_x,\quad
	\phi(\pm p_y)=\pm q_y,
	$$
	where $p\in\P(L_1)$ and $q=\phi(p)\in\P(L_2)$.
	
	During the contact isotopy, the local shape `$\vcenter{\hbox{\includegraphics{char_L_singularcrossing_bicolor_dot_rev.pdf}}}$' at each singular point in the Lagrangian projection can be translated and rotated, but never flipped as `$\vcenter{\hbox{\includegraphics{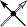}}}$' since our contact structure $(\R^3,\xi_0)$ is tight.
	Hence there is an obvious correspondence between singular points, and {\em ordered} nearby points as well. This implies the equivariance of $\sigma$ as above.
	
	In general, we can define the equivariant order for a singular point of a Legendrian singular link in any co-oriented contact 3-manifold $(M,\xi)$.
	
	Notice that $\sigma(L,p)$ is well-defined only for $L\in\LSK$ because there is no constraint on the tangent plane at $p\in\P(L)$ in $\SK$ and so it can be flipped freely.
	Consequently, the order $\sigma$ is a property exclusive to $\LSK$.
	
	We extend the concept of the order at singular points to arcs in Legendrian singular links.
	
	\begin{defn}\label{def:arc}
		Let $L\in\LSK$ and $\gamma:(I,\partial I)\to(L,\P(L))$ be an oriented arc which is piecewise smooth and injective except at $\P(L)$.
		A {\em marking $m(\gamma)$ of $\gamma$ on $L$} is a sequence of nearby points in $L$ that $\gamma$ meets.
	\end{defn}
	Note that the marking $m$ itself is also equivariant under co-orientation preserving contactomorphism $\phi$ as follows:
	$$
	\phi(m(\gamma))=m(\phi(\gamma)).
	$$
	Hence for any invariant $f$ on $\LSK$, we can consider the {\em enhanced invariant $f^m$ with marking $m$}, which may use the information from the marking $m$.
	For example, the results of enhanced full resolutions are Legendrian links with labels on each component.
	
	In general, the marking gives an obstruction for the given arc to be the same as another arc via contact isotopy, which will be discussed in \S\ref{sec:obs}.
	
	\begin{defn}
		Let $L\in\LSK$ and $p\in\P(L)$. 
		A flip move $Fl(L,p)$ is a diagram replacement of the front projection depicted as in Figure~\ref{fig:flip}.\footnote{This move has been discussed before. Indeed the flip $Fl$ is the {\em Legendrian horizontal flype} in \cite{NT}.}
	\end{defn}
	
	\begin{figure}[ht]
		$$
		\xymatrix{
			\vcenter{\hbox{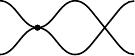}}\ar@{<->}[rr]^{Fl(L,p)}&&
			\vcenter{\hbox{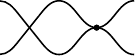}}
		}
		$$
		\caption{$Fl(L,p)$ of flipping at $p$}
		\label{fig:flip}
	\end{figure}
	
	Note that not all diagrams have a local picture as depicted in Figure \ref{fig:flip} so a flip is not always applicable.
	Moreover, it preserves the singular link type and resolutions and commutes with $S_\pm$. 
	That is,
	\begin{equation}\label{eq:flipresol}
	\|Fl\| = \|\cdot\|,\quad\mathcal{R}(Fl) = \mathcal{R},\quad
	Fl(S_\pm) = S_\pm(Fl),
	\end{equation}
	and see Figure~\ref{fig:resflip} for an example.
	
	\begin{figure}[ht]
		$$
		\xymatrix{
			\vcenter{\hbox{\includegraphics[scale=0.8]{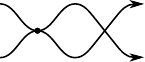}}}\ar@{<->}[rr]^{Fl(L,p)}\ar[dd]_-{R_+}\ar[ddr]_(0.45){R_-}\ar[ddrr]^(0.35){R_0}&&
			\vcenter{\hbox{\includegraphics[scale=0.8]{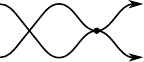}}}\ar[ddll]_(0.35){R_+}\ar[ddl]^(0.45){R_-}\ar[dd]^-{R_0}\\\\
			\vcenter{\hbox{\includegraphics[scale=0.8]{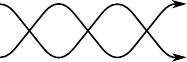}}}& 
			\vcenter{\hbox{\includegraphics[scale=0.8]{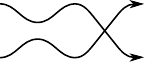}}}=
			\vcenter{\hbox{\includegraphics[scale=0.8]{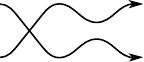}}}&
			\vcenter{\hbox{\includegraphics[scale=0.8]{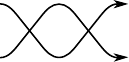}}}
		}
		$$
		\caption{Resolutions and flip moves}
		\label{fig:resflip}
	\end{figure}
	
	Another simple but important observation is that $\sigma(L,p)$ is not equivariant under $Fl$ in general, so it may not be realized by a Legendrian isotopy. Therefore the flip can not be replaced with $\pm$-(de)stabilizations since $S_\pm$ preserves $\sigma$, and we have a singular version of Lemma~\ref{lem:mountain}.

	\begin{prop}\label{prop:singconn}
		Let $K\in\SK$. Then any two Legendrian singular links $L_1$, $L_2$ in $\mathcal{L}(K)$ can be connected by a sequence of $S_\pm$ and $Fl$.
		Indeed, at most one flip for each singular point is necessary.
	\end{prop}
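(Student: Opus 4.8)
The plan is to separate the comparison of $L_1$ and $L_2$ into a local $\Z/2$-invariant at each singular point, which is controlled by the flip, and the complementary Legendrian tangle, which is controlled by stabilizations via a relative form of Lemma~\ref{lem:mountain}. (Here, as in Lemma~\ref{lem:mountain}, $S_\pm$ is understood to allow destabilizations as well.) So I would first use at most one flip at each singular point to match the local invariant of $L_1$ to that of $L_2$, and then connect the resulting Legendrian singular links by $S_\pm$ alone.

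To set up the local invariant, note that $\|L_1\|=\|L_2\|=K$ fixes an identification $\P(L_1)\cong\P(K)\cong\P(L_2)$; fix in addition a labelling of the two strand-germs of $K$ at each $p\in\P(K)$. For $L\in\mathcal{L}(K)$ and $p\in\P(L)$, Lemma~\ref{lem:Darboux} presents $L$ near $p$ as the standard model $I_x\cup I_y$, so the co-orientation singles out the ``$x$-strand'' at $p$; comparing it with the fixed labelling of $K$ yields an element $s(L,p)\in\Z/2$ -- equivalently, $s(L,p)$ records $\sigma(L,p)$ relative to $K$, and there are exactly two possibilities because $p_x,-p_x$ lie on one strand-germ and $p_y,-p_y$ on the other. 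By the equivariance of $\sigma$ under co-orientation preserving contactomorphisms, $s(\cdot,p)$ is a contact-isotopy invariant, and because a stabilization at a nonsingular point does not meet a standard neighborhood of any singular point, $s(\cdot,p)$ is unchanged by $S_\pm$. On the other hand $Fl(\cdot,p)$ toggles $s(\cdot,p)$: this follows from the description of $Fl$ as a horizontal flype together with the fact that $\sigma$ is not $Fl$-equivariant, while $\|Fl\|=\|\cdot\|$ keeps us inside $\mathcal{L}(K)$. Since the flip is a local modification, after possibly some preparatory contact isotopies and stabilizations we may assume each $Fl(\cdot,p)$ is carried out in a ball meeting $L_1$ only near $p$, so that it leaves $s(\cdot,p')$ unchanged for $p'\neq p$. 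Applying $Fl$ to $L_1$ once at each $p$ with $s(L_1,p)\neq s(L_2,p)$ -- at most one flip per singular point -- then produces $L_1'\in\mathcal{L}(K)$ with $s(L_1',\cdot)=s(L_2,\cdot)$.

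It remains to show that $L,L'\in\mathcal{L}(K)$ with $s(L,\cdot)=s(L',\cdot)$ are connected by a sequence of $S_\pm$. Using Lemma~\ref{lem:Darboux} and a standard ambient contact isotopy argument, we may assume after a contact isotopy that $L$ and $L'$ coincide on a neighborhood $B=\coprod_{p\in\P}B_p$ of $\P$, each $B_p$ a standard $3$-ball carrying the model $I_x\cup I_y$; the hypothesis $s(L,\cdot)=s(L',\cdot)$ is precisely what lets this identification of local models be chosen compatibly with the $K$-labelling of strand-germs. Then $T:=L\cap\overline{S^3\setminus B}$ and $T':=L'\cap\overline{S^3\setminus B}$ are Legendrian tangles with the same Legendrian boundary, and since $\|L\|=\|L'\|=K$ while $L$ and $L'$ agree near $\P$, they represent the same smooth tangle type rel $\partial$. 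A relative version of Lemma~\ref{lem:mountain} -- the Legendrian mountain range for Legendrian tangles with fixed Legendrian boundary, which runs exactly as the front-projection proof in \cite{FT} with the boundary collar held fixed -- then shows $T$ and $T'$ are related by stabilizations performed in the interior of $\overline{S^3\setminus B}$. Each such interior stabilization is an $S_\pm$ at a nonsingular point of the corresponding singular link, so reattaching $B$ yields a sequence of $S_\pm$ from $L$ to $L'$, hence from $L_1'$ to $L_2$; combined with the flips from $L_1$ to $L_1'$ this proves the proposition.

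The main obstacle is this relative mountain range statement: two Legendrian tangles in a ball complement with identical Legendrian boundary and the same smooth type must become Legendrian isotopic after interior stabilizations. Although it parallels the closed case \cite{FT} essentially word for word, it genuinely rests on the point that a smooth isotopy realizing $\|L\|=\|L'\|$ can be taken to fix a neighborhood of the singular points -- which is exactly where one uses that the local models match (guaranteed by $s(L,\cdot)=s(L',\cdot)$). A secondary point needing care is the localization of each flip away from the other singular points: this may cost preparatory stabilizations but never an extra flip, so the bound of one flip per singular point survives. One could also attempt a purely front-theoretic route, replacing each forbidden move of Figure~\ref{fig:4a4b} appearing in a smooth move sequence by a flip together with regular Reidemeister moves, via the Legendrian graph viewpoint of \cite{BI}.
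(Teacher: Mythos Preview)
Your approach is correct but follows a genuinely different route from the paper's. You isolate a $\Z/2$-valued local invariant $s(\cdot,p)$ governed by the flip, normalize it with at most one flip per singular point, and then invoke a relative (tangle) form of Fuchs--Tabachnikov on the complement of standard neighborhoods to finish with stabilizations alone. The paper instead follows a chosen smooth isotopy $\phi_t$ directly and tracks, for each singular point $p$, the unit normal $\gamma_p(t)\in S^2$ to the tangent plane at $\phi_t(p)$; the front projection is declared ``almost regular'' near $p$ whenever $\gamma_p(t)$ avoids the equator $S^1_{xy}$, and simple connectivity of $S^2$ lets one perturb $\phi_t$ so that $\gamma_p$ crosses the equator at most once---each such crossing becomes exactly one flip (together with stabilizations at the nearby points), while all other front degenerations are absorbed by $S_\pm$ and the Reidemeister moves of Figure~\ref{fig:frontmove}. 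Your argument is more modular and makes the obstruction role of $\sigma$ explicit, at the price of needing the relative mountain-range statement you flag (and a small imprecision: the identification $\P(L_i)\cong\P(K)$ is only canonical up to symmetries of $K$, so one should fix a smooth isotopy once and for all); the paper's argument is self-contained and obtains the ``one flip per $p$'' bound geometrically from $\pi_1(S^2)=0$ without having to prepare individual flips or check that preparatory stabilizations leave the other singular points undisturbed. The front-theoretic alternative you sketch at the end (trading the forbidden moves of Figure~\ref{fig:4a4b} for flips via \cite{BI}) is yet a third route, distinct from both.
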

	\begin{proof}
		Since $L_1$ and $L_2$ have the same topological type, there is a smooth isotopy $\phi_t$ between $L_1$ and $L_2$. Consider 1-parameter family $\pi_F(\phi_t(L_1))$ of diagrams as before.
		Then by definition, both ends are {\em regular} but being regular fails for {\em almost all $t$} because the front projection at singular point depicted in Figure~\ref{fig:singularpoints} is never generic in the smooth setting.
		However, we avoid this anomalous situation by relaxing the definition of regularity of the front projection as follows.
		
		Let $v_1, v_2$ be two tangential vectors at $p\in\P(L_1)$. Then the plane generated by $v_i$'s may rotate during the isotopy $\phi_t$. 
		For the notational convenience, we introduce a function $\gamma_{p}:[0,1]\to S^2$ defined by
		$$
		\gamma_{p}(t)=\frac{(\phi_t)_*(v_1\times v_2)}{\|(\phi_t)_*(v_1\times v_2)\|}.
		$$
		We say that $\pi_F(\phi_t(L_1))$ is {\em almost regular near $p\in\P(L)$ at $t$} if $\gamma_{p}(t)$ does not lie on the equator $S^1_{xy}$.
		Then since $S^1_{xy}$ is closed in $S^2$, almost regularity is an open condition and therefore there are only finitely many exceptions for almost regularity near $p$.
		Moreover, since $S^2$ is simply connected, we may perturb $\phi_t$ so that $\gamma_{p}(t)$ intersects $S^1_{xy}$ at most once at $t(p)\in(0,1)$ for each $p\in\P(L)$.
		
		Let $\{t_1,\dots, t_k\}\subset I$ be the finite subset such that $\pi_F(\phi_t(L_1))$ is not almost regular near $p_i$ at $t_i$.
		Then for each interval $(t_{i-1},t_i)$ we can find a diagram $D_i$ by projecting tangential vectors $(\phi_t)_*(v_i)$ to the contact plane $\xi_{std}$ at $\phi_t(p_i)$ so that $D_i$ is regular near $\phi_t(p_i)$. Moreover, $D_{i+1}$ is obtained from $D_i$ by performing one flip move at $\phi_t(p_i)$ possibly with $\pm$-(de)stabilizations at the nearby points.
		
		Recall that all other kinds of failures of regularity correspond to $S_\pm$ together with Reidemeister moves depicted in Figure~\ref{fig:frontmove}.
	\end{proof}
	
	\begin{rmk}
		The following move which looks like a vertical flip preserves a Legendrian singular link type and can be obtained by applying the Reidemeister move {\rm (VI)} twice.
		\begin{figure}[ht]
			$$
			\xymatrix{
				\vcenter{\hbox{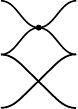}}\ar@{<->}[rr]^{({\rm VI})\circ({\rm VI})}&&
				\vcenter{\hbox{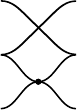}}
			}
			$$
			\caption{A vertical flip}
			\label{fig:vflip}
		\end{figure}
		
	\end{rmk}

	\subsection{Obstruction from the marking}\label{sec:obs}
	Let $L_1, L_2\in\LSK$, and $\gamma$ be an arc in $L_1$ defined as in Definition \ref{def:arc}.
	Then the marking $m(\gamma)$ corresponds to a smooth arc of some full resolution $\mathcal{R}$ of $L_1$. Especially, if $\gamma$ is a loop, $m(\gamma)$ represents a link component of $\mathcal{R}(L_1)$, denote it by $\mathcal{R}(\gamma)$.
	
	Now suppose that there is a contact isotopy $\phi$ between $L_1$ and $L_2$.
	Since the marking is equivariant under $\phi$, the link component $\mathcal{R}(\gamma)$ of $\mathcal{R}(L_1)$ should map to the link component $\mathcal{R}(\phi(\gamma))$ of $\mathcal{R}(L_2)$.
	In other words, if there is no contact isotopy between $\mathcal{R}(L_1)$ and $\mathcal{R}(L_2)$ sending 
	$\mathcal{R}(\gamma)$ to $\mathcal{R}(\phi(\gamma))$ then $L_1$ is different from $L_2$ in $\LSK$.
	
	Hence the obstructions obtained in this way are related with more intrinsic structures of Legendrian or smooth link types, such as, {\em topological non-switchability} (Example~1), {\em Legendrian non-switchability} (Example~2), and {\em Legendrian non-invertibility} (Example~3).
	
	\begin{ex}[Topological non-switchability]
		Let $L\in\LK$ be a Legendrian knot with $\|L\|$ different from the unknot, and $L_{1}, L_{2}$ be Legendrian singular knots of degree 1 with $p\in\P(L_{1})$, $q\in\P(L_{2})$ as in Figure~\ref{fig:pmsingstab}.
		One can directly check that $\|L_1\|=\|L_2\|$ in $\SK$ and furthermore $\mathcal{R}(L_1)=\mathcal{R}(L_2)$ in $\LK$.
		
		\begin{figure}[ht]
			$$
			L=\vcenter{\hbox{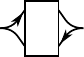}}, \quad
			L_{1}=\vcenter{\hbox{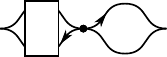}},\quad
			L_{2}=\vcenter{\hbox{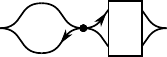}}
			$$
			$$
			R_+(L_{1},p)=\vcenter{\hbox{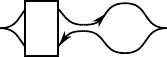}}=
			\vcenter{\hbox{\input{F_L.pdf_tex}}}=\vcenter{\hbox{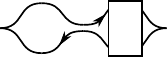}}=R_+(L_{2},q)
			$$
			$$
			R_-(L_{1},p)=\vcenter{\hbox{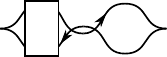}}=S_+S_-(L)=
			\vcenter{\hbox{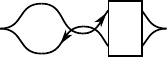}}=R_-(L_{2},q)
			$$
			$$
			R_0(L_{1},p)=\vcenter{\hbox{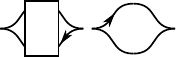}}\ =L\coprod L_\bigcirc=\
			\vcenter{\hbox{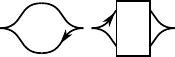}}=R_0(L_{2},q)
			$$
			\caption{Positive and negative singular stabilizations}
			\label{fig:pmsingstab}
		\end{figure}
		
		Let $\gamma_1, \gamma_2$  be arcs in $L_{1}$ starting and ending at $p$ which satisfy
		$$
		m(\gamma_1)=(p_x,-p_y),\quad m(\gamma_2)=(p_y,-p_x).
		$$
		Then $\mathcal{R}(\gamma_1)$ and $\mathcal{R}(\gamma_2)$ correspond to the unknot and $L$ in $R_0(L_{1},p)$, respectively.
		But in $R_0(L_2,q)$ the corresponding components are switched.
		Therefore $L_1$ and $L_2$ are different Legendrian singular knots in $\LSK$, and never connected by stabilizations because both stabilizations $S_\pm$ commute with the $0$-resolution $R_0$.
		Hence one flip is necessary to connect them in $\mathcal{L}(\|L_1\|)$ by Proposition~\ref{prop:singconn}.
	\end{ex}
	
	\begin{ex}[Legendrian non-switchability]
		Let us provide another pair of Legendrian singular knots of degree 1, $(L_{a},p)$, $(L_{b},q)$ depicted in Figure~\ref{fig:6_3,8_16}.
		It is easily checked that they are different  by exactly one flip move and   by (\ref{eq:flipresol}) we have 
		\begin{align*}
			\|L_{a}\|&=\|L_{b}\|\in\SK,\\ 
			R_+(L_{a},p)&=R_+(L_{b},q)\in\mathcal{L}(8_2),\\
			R_-(L_{a},p)&=R_-(L_{b},q)\in\mathcal{L}(6_2),\\
			R_0(L_{a},p)&=R_0(L_{b},q)\in\mathcal{L}(L7a6),
		\end{align*}
		where $L7a6$ is a link in the Thistlethwaite link table. 
		
		Suppose $L_a=L_b$ via a contact isotopy $\phi_t$ in $\LSK$.
		Let $\gamma_1, \gamma_2$ be arcs in $L_a$ starting and ending at $p$ satisfying that 
		$$
		m(\gamma_1)=(p_y,-p_x)\quad\text{ and }\quad
		m(\gamma_2)=(p_x,-p_y).
		$$
		
		Figure \ref{fig:R_0(S_12)} shows two components of both $R_0(L_a,p)$ and $R_0(L_b,q)$, determined by the markings $m(\gamma_i)$ and $m(\phi_1(\gamma_i))$, respectively.
		Note that
		$$
		m(\phi(\gamma_1))=(q_y,-q_x)\quad\text{ and }\quad
		m(\phi(\gamma_2))=(q_x,-q_y).
		$$
		
		Hence, $\phi_1$ must switch the components as shown in Figure~\ref{fig:R_0(S_12)}, and the link type $\|R_0(L_a,p)\|=L7a6$ is topologically switchable.
		However $R_0(L_a,p)$ is not Legendrian switchable, i.e., there is no Legendrian isotopy interchanging its components, see \cite{Cho}. Therefore this contradiction implies that $L_a\neq L_b$ in $\LSK$.
		
		\begin{figure}[ht]
			\centering
			{\scriptsize
				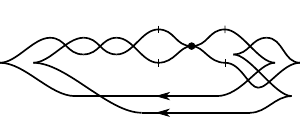\qquad
				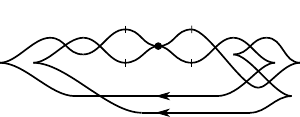
			}
			\caption{A pair of Legendrian singular knots $L_a$ and $L_b$}
			\label{fig:6_3,8_16}
		\end{figure}
		
		\begin{figure}[ht]
			\centering
			{\scriptsize
				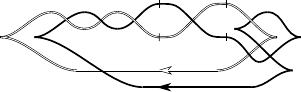\qquad
				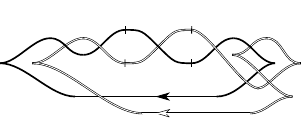
			}
			\caption{Two $0$-resolutions $R_0(L_{a})$ and $R_0(L_{b})$}
			\label{fig:R_0(S_12)}
		\end{figure}
	\end{ex}
	
	\begin{ex}[Legendrian non-invertibility]
		Since there is no canonical orientation of $R_\infty$, it seems less natural compared to the other three resolutions.
		However, by using the aid of marking, there is a way for assigning an orientation consistently as follows.
		For $L\in\LSK$ and $p\in\P(L)$, the $\infty$-resolution $R_\infty(L,p)$ at $p$ is a modification so that $p_x$ and $p_y$ (or $-p_x$ and $-p_y$) are joined by an arc.
		Hence by the equivariance of marking, we may assign an orientation near $p$ as {\em from $p_x$ to $p_y$}, or the opposite way.
		It is easy to check that this assignment defines an orientation on $R_\infty(L,p)$ no matter how the arcs passing through $p$ are joined in $L$ globally.
		This is the enhancement $R_\infty^m$ of the $\infty$-resolution $R_\infty$.
		
		There exists a pair of examples which can be distinguished by $R_\infty^m$ but not by the classical invariants and $R_\eta$, $\eta\in\{+,-,0\}$ as follows. 
		Let $(L_c,p)$ and $(L_{d},q)$ be Legendrian singular knots of degree one as depicted in Figure~\ref{fig:6_1,8_1}. 
		\begin{figure}[ht]
			\centering
			{\scriptsize
				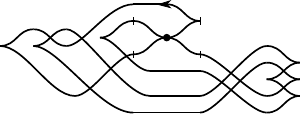\qquad
				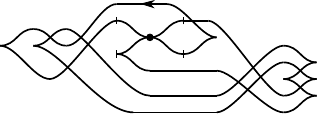
			}
			\caption{A pair of Legendrian singular knots $L_c$ and $L_d$}
			\label{fig:6_1,8_1}
		\end{figure}
		
		Since $L_d$ can be obtained by one negative flip move as before, one can check that
		\begin{align*}
			\|L_c\|&=\|L_d\|\in\SK,\\
			R_+(L_{c},p)&=R_+(L_{d},q)\in\mathcal{L}(8_1),\\
			R_-(L_{c},p)&=R_-(L_{d},q)\in\mathcal{L}(6_1),\\
			R_0(L_{c},p)&=R_0(L_{d},q)\in\mathcal{L}(K_H).
		\end{align*}
		Here $K_H$ is the Hopf link having the linking number $-1$.
		Moreover, $R_0^m(L_{c},p)$ and $R_0^m(L_{d},q)$ are {\em labelled} Legendrian Hopf links which look like
		$$
		R_0^m(L_{c},p)=\vcenter{\hbox{\includegraphics{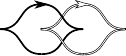}}},\quad
		R_0^m(L_{d},q)=\vcenter{\hbox{\includegraphics{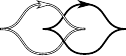}}}.
		$$
		Therefore, they are same as Legendrian links, and the enhanced $0$-resolution $R_0^m$ is not useful for this pair.
		
		As mentioned above, the enhanced $\infty$-resolutions $R^m_\infty(L_c,p)$ and $R^m_\infty(L_d,q)$ can be considered as oriented Legendrian knots, whose orientations are given by arcs from $p_x$ to $p_y$ and $q_x$ to $q_y$, respectively.
		More precisely, we have
		$$
		R^m_\infty(L_c,p) = S_-( L(\mu(7_2))),\quad
		R^m_\infty(L_d,q) = S_-( L(-\mu(7_2))),
		$$
		where $\mu(7_2)$ is a topological mirror of $7_2$ knot, and $L(\mu(7_2))$ looks like as follows.
		$$
		\includegraphics{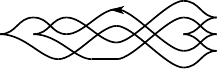}
		$$
		
		It is known that $L(\mu(7_2))$ is Legendrian non-invertible\footnote{Here a given Legendrian knot $L$ is {\em non-invertible} means that $L\neq -L$ as a Legendrian knot type and $L(\mu(7_2))$ is the simplest Legendrian non-invertible knot.}, that is, $L(\mu(7_2))$ and $-L(\mu(7_2))$ are not same in $\LK$.
		Moreover, their stabilizations are pairwise different as well \cite{Cho}, and so $R^m_\infty(L_c,p)\neq R^m_\infty(L_d,q)$. 
		
		Therefore $L_c\neq L_d$ in $\LSK$, and this means that the enhanced resolutions are strictly stronger than the Legendrian switchabilities of resolutions as obstructions.
	\end{ex}
	
	\begin{rmk}
		Topological non-invertibility can be used to produce another distinct pairs in $\LSK$.
		Then as Example~1, the resulting pairs are not connected by a sequence of stabilizations.
	\end{rmk}
	
	\begin{ex}\label{ex:final}
		In Example~1, 2 and 3, we heavily use the properties of link types, such as switchability and invertibility.
		But there still exist subtle phenomena which are not captured by any invariant defined above. 
		Let $L_e$ be the Legendrian singular knot described in Figure~\ref{fig:0,4_1} with $p\in\P(L_e)$.
		
		\begin{figure}[ht]
			\centering
			{\scriptsize
				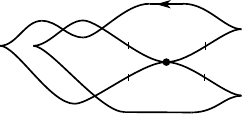\qquad
				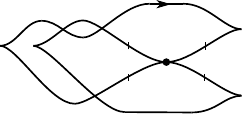
			}
			\caption{A pair of Legendrian singular knots $L_e$ and $-L_e$}
			\label{fig:0,4_1}
		\end{figure}
		
		One can readily check that $L_e$ and $-L_e$ share the all the invariants defined above.
		In order to distinguish them we need a certain preparation, a singular connected sum or a tangle replacement near the singular point.
		We will come back to this example when we are ready.
	\end{ex}

	\section{Singular connected sum and decomposition}
	\label{sec:singularconnectedsum}
	The main content of this section is to define a {\em singular connected sum of two Legendrian singular links} as a generalization of the connected sum of two Legendrian links. 
	
	\subsection{Singular connected sum}\label{subsec:singularconnectedsum}
	For a given $L\in \LSK$ and $p\in\P(L)$, we have the local standard neighborhood $B_p$ of $p$ with a contactomorphism $\phi_{L,p}$ as in Lemma \ref{lem:Darboux}.
	$$
	\phi_{L,p}:(B_p, B_p\cap L)\to(B_{std},I_x\cup I_y).
	$$
	
	Let $(L_1,p)$ and $(L_2,q)$ be pairs of Legendrian singular links and singular points, and let $B_p$ and $B_q$ be standard neighborhoods.
	We define $\phi:\partial B_p\to\partial B_q$ as the composition of three diffeomorphisms 
	\begin{align*}
		\phi:(\partial B_p, \sigma(L_1,p))&\stackrel{\phi_{L_1,p}}{\longrightarrow}(S_{std},\sigma(I_x\cup I_y,\mathbf{0}))\\
		&\stackrel{-_{xy}}{\longrightarrow}
		(S_{std},-\sigma(I_x\cup I_y,\mathbf{0}))\stackrel{\phi_{L_2,q}^{-1}}{\longrightarrow}(\partial B_q,-\sigma(L_2,q)),
	\end{align*}
	$$
	$$
	where $-_{xy}(x,y,z)=(-x,-y,z)$ is $\pi$-rotation along the $z$-axis\footnote{If a $0$, $\pi/2$ or $-\pi/2$ rotation is used instead, then it defines an {\em unoriented singular connected sum}. See \S\ref{sec:unori}.}.
	Hence $\phi$ maps nearby points of $p$ to those of $q$ as
	$$\phi:(p_x,p_y,-p_x,-p_y)\mapsto(-q_x,-q_y,q_x,q_y).$$
	
	Then the connected sum of two $(S^3,\xi_{std}$) can be defined by using the gluing map $\phi$. To give an orientation on the connected sum $S^3\# S^3$, it is necessary that 
	either $\phi_{L_{1,p}}$ or $\phi_{L_{2,q}}$ is orientation-reversing.
	Then $\phi$ is an orientation-reversing diffeomorphism. 
	Note that $\phi$ gives an orientation-reversing isomorphism on the oriented characteristic foliations $(\partial B_p)_{\xi_{std}}$ and $(\partial B_q)_{\xi_{std}}$.
	Then by Colin's gluing theorem \cite{Co} the resulting manifold is again $(S^3,\xi_{std})$.
	
	\begin{defn}
		The {\em singular connected sum $(L_1,p)\otimes(L_2,q)$} is the Legendrian singular link in $S^3$ defined by
		$$
		(L_1,p)\otimes(L_2,q)=(L_1\setminus (L_1\cap \mathring{B}_p))\coprod_\phi(L_2\setminus (L_2\cap \mathring{B}_q)).
		$$
	\end{defn}
	
	\begin{proof}[Proof of Theorem \ref{thm:singularconnectedsum}]
		Notice that the only possible ambiguities occur when we choose standard neighborhoods.
		If there are two standard neighborhoods, then we may assume that one contains the other.
		However, the complementary region is diffeomorphic to $S^2\times [0,1]$ whose contact structure is determined uniquely by the characteristic foliations at boundaries up to contact isotopy \cite[Theorem 4.9.4]{Ge}.
		Hence all standard neighborhoods are contact isotopic in $S^3$.
	\end{proof}
	
	When $p\in L_1$ and $q\in L_2$ are non-singular, the standard neighborhoods $B_p$ and $B_q$ can be identified with $(B_{std},I_x)$.
	Then the above gluing homeomorphism $\phi:\partial B_p\to \partial B_q$ recovers the usual connected sum $(L_1,p)\# (L_2,q)$ discussed in \cite{EH}.
	Roughly speaking, the singular connected sum looks like usual connected sums of two pairs of components simultaneously. 
	
	Recall that a Legendrian unknot $L_\bigcirc$ is the identity of the connected sum operation. The following plays a role of the identity under the singular connected sum.
	\begin{defn} \label{def:identity}
		Let $L_{\bigcirc\!\!\!\bigcirc}$ be a 2-component Legendrian singular link of degree $2$ defined by 
		$$
		L_{\bigcirc\!\!\!\bigcirc}=(x\text{-axis})\cup (y\text{-axis})\cup\{\infty\}\subset\R^3\cup\{\infty\} = S^3.
		$$
	\end{defn}
	
	Alternatively, since each axis represents the Legendrian unknot $L_\bigcirc$ in $S^3$, $L_{\bigcirc\!\!\!\bigcirc}$ is a union of 2 copies of $L_\bigcirc$ with 2 singular points $\{\mathbf{0},\infty\}$,
	as depicted in Figure~\ref{fig:tanglesumidentity}.
	Since $(S^3, L_{\bigcirc\!\!\!\bigcirc})$ is obtained by gluing two copies of $(B_{std}, I_x\cup I_y)$, it is the identity under the singular connected sum. Note that since $L_{\bigcirc\!\!\!\bigcirc}$ has rotational symmetry, it has a unique choice of orientation up to isotopy.
	
	\begin{figure}[ht]
		{\scriptsize
			$$
			\vcenter{\hbox{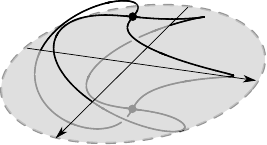}}\quad
			\vcenter{\hbox{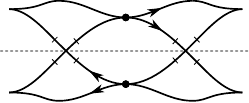}}\quad
			\vcenter{\hbox{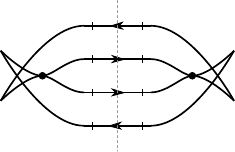}}
			$$
		}
		\caption{The Legendrian singular link $L_{\bigcirc\!\!\!\bigcirc}$ and its projections}
		\label{fig:tanglesumidentity}
	\end{figure}
	
	\subsection{Unoriented singular connected sum}
	\label{sec:unori}
	Let $(L_1,p), (L_2, q)$ be pairs of Legendrian singular links and singular points.
	Then the orders $\sigma(L_1,p)$ and $\sigma(L_2,q)$ are
	$$
	\sigma(L_1,p)=(p_x,p_y,-p_x,-p_y),\quad\sigma(L_2,q)=(q_x,q_y,-q_x,-q_y).
	$$
	
	Recall the gluing map $\phi$ defined by $\phi(\pm p_*)=\mp q_*$, which uses the $\pi$-rotation about $z$-axis and the only option resulting in a canonical orientation of $(L_1,p)\otimes(L_2,q)$.
	However, if we relax the condition about the induced orientation, there are 3 more options $\phi_0$, $\phi_+$ and $\phi_-$ to glue nearby points of $L_1$ and $L_2$, where $\phi_0$ uses 0-rotation and $\phi_\pm$ uses the $\pm\pi/2$-rotation about $z$-axis, respectively.
	In other words,
	\begin{align*}
		\phi_0(p_x,p_y,-p_x,-p_y)&=(q_x,q_y,-q_x,-q_y)\\
		\phi_+(p_x,p_y,-p_x,-p_y)&=(q_y,-q_x,-q_y,q_x)\\
		\phi_-(p_x,p_y,-p_x,-p_y)&=(-q_y,q_x,q_y,-q_x).
	\end{align*}
	
	Then for $\eta\in\{+,-,0\}$, we define the {\em $\eta$-unoriented singular connected sum $(L_1,p)\otimes_{\eta}(L_2,q)$} by using $\phi_{\eta}$ as follows.
	$$
	(L_1,p)\otimes_{\eta}(L_2,q)=(L_1\setminus (L_1\cap \mathring{B}_p))\coprod_{\phi_{\eta}}(L_2\setminus(L_2\cap \mathring{B}_q)).
	$$
	It is obvious that as unoriented Legendrian links
	\begin{align*}
		(L_1,p)\otimes_0(L_2,q)&=|(L_1,p)\otimes(-L_2,q)|=|(-L_2,q)\otimes(L_1,p)|\\
		&=|(L_2,q)\otimes(-L_1,p)|=(L_2,q)\otimes_0(L_1,p).
	\end{align*}
	where $|L|$ and $-L$ are obtained by forgetting and reversing orientations of $L$, respectively. Therefore $\otimes_0$ is commutative.
	However, neither $\otimes_+$ nor $\otimes_-$ is commutative. Instead, we have
	$$
	(L_1,p)\otimes_+(L_2,q)=(L_1,p)\otimes_-(-L_2,q).
	$$
	
	Indeed, when one of $L_i$'s is the same as its reverse, then both $\otimes_\pm$ are the same and commutative on the $L_i$'s. The $\infty$-resolution is a typical example.

	\subsection{Singular connected sum decomposition}
	Recall the standard sphere $S_{std}$ is defined by the equation $r^4+4z^2=1$. Then 
	its characteristic foliation $(S_{std})_{\xi_{rot}}$ given by $\alpha_{rot}$ looks as depicted in Figure~\ref{fig:standardball}.
	\begin{figure}[ht]
		\includegraphics{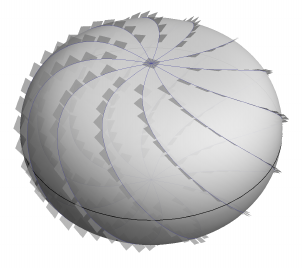}
		\caption{The standard sphere $S_{std}$ in $(\R^3,\xi_{rot})$}
		\label{fig:standardball}
	\end{figure}
	
	A {\em separating sphere $S$ of $L$} is a sphere in $S^3$ such that
	\begin{enumerate}
		\item there exists a contactomorphism $(\R^3,S)\to(\R^3,S_{std})$, and therefore an oriented characteristic foliation $S_{\xi_{std}}$ has exactly two elliptic singular points $e^+$ and $e^-$ (such a characteristic foliation is called {\em standard});
		\item $S$ intersects $L$ transversely at four points;
		\item each intersection in $S\cap L$ lies in a distinct leaf of $S_{\xi_{std}}$.
	\end{enumerate}
	
	Then there is a  projection map $\tau:S\setminus\{e^+,e^-\}\to S^1$ along the leaves which allows us to define the following.
	
	\begin{defn}\label{def:cyclicorder}
		Let $L\in\LSK$ and $S$ be a separating sphere of $L$.
		A {\em cyclic order $\sigma_{cyc}(L,S)$} is defined as an order on $L\cap S$ up to cyclic permutations
		induced by $\tau$.
		
		A contact isotopy $H_t$ is {\em order-preserving on $S$ with respect to $L$} if $H_t(S)$ is a separating sphere of $L$ for each $t$.
	\end{defn}
	
	Then we have the following lemma which is an analogue of Lemma~\ref{lem:Darboux}.
	
	\begin{lem}
		Let $L\in\LSK$ and $S$ be a separating sphere of $L$. Then there exists a neighborhood $N(S)\subset S^3$ of $S$ and a contactomorphism $\phi_S$ of pairs such that
		$$
		\phi_S:(N(S), N(S)\cap L)\to(\R^3\setminus\{\mathbf{0}\}, I_x\cup I_y\setminus\{\mathbf{0}\}).
		$$
		
		Moreover $\phi_S(S)$ and $S_{std}$ are order-preserving contact isotopic with respect to $\phi_S(L)$.
	\end{lem}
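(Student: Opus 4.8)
The plan is to adapt the proof of Lemma~\ref{lem:Darboux} from a neighborhood of a single singular point to a neighborhood of the whole separating sphere $S$: first normalize the contact germ along $S$, then straighten the four strands of $L$ to the four coordinate half-axes. Since the oriented characteristic foliation $S_{\xi_{std}}$ is standard, its leaf space is a circle, it has a unique source $e^+$ and a unique sink $e^-$ and no closed leaves, so it is diffeomorphic, as an oriented singular foliation, to $(S_{std})_{\xi_{rot}}$ (whose leaves spiral from one pole to the other, as in Figure~\ref{fig:standardball}). First I would invoke the neighborhood theorem for characteristic foliations (the germ of a contact structure along a surface depends only on the characteristic foliation; concretely, apply \cite[Theorem~4.9.4]{Ge} to the complement of standard elliptic balls around $e^\pm$ and glue in the local elliptic models) to obtain a co-orientation preserving contactomorphism $\psi$ from a neighborhood $N(S)$ of $S$ onto a neighborhood of $S_{std}$ in $(\R^3\setminus\{\mathbf{0}\},\xi_{rot})$ with $\psi(S)=S_{std}$. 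A neighborhood of $S_{std}$ in $(\R^3\setminus\{\mathbf{0}\},\xi_{rot})$ is in turn contactomorphic to all of $(\R^3\setminus\{\mathbf{0}\},\xi_{rot})$: the contact vector field $X=z\,\partial_z+\tfrac12 r\,\partial_r$ satisfies $\mathcal{L}_X\alpha_{rot}=\alpha_{rot}$, preserves $I_x\cup I_y$, and its flow permutes the level spheres $\{r^4+4z^2=c\}$, so this amounts to the uniqueness of the $\R$-invariant tight contact structure on $S^2\times\R$ with standard slices; after this enlargement we may assume $\psi(N(S))=\R^3\setminus\{\mathbf{0}\}$.

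Next I would put the four strands of $L$ into standard position. After $\psi$, the set $\psi(N(S)\cap L)$ is a union of four arcs transverse to $S_{std}$ that meet it in four points lying in four distinct leaves. Relabel the four strands so that their cyclic order along $S_{std}$ matches that of $I_x\cup I_y$; this is always possible since within $N(S)$ the four half-axes are just four disjoint rays, and any four distinct cyclically ordered points of the leaf circle can be carried to the four points $S_{std}\cap(I_x\cup I_y)$ by an orientation preserving diffeomorphism of that circle. Any diffeomorphism of $S_{std}$ preserving its oriented characteristic foliation extends to a contactomorphism of a neighborhood of $S_{std}$, so composing $\psi$ with such a map (and with the flow of $X$) I arrange that the four intersection points of $\psi(L)$ with $S_{std}$ are exactly $S_{std}\cap(I_x\cup I_y)$. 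Finally, near each of these four points the germ of a Legendrian arc transverse to $S_{std}$ is unique up to a contactomorphism preserving $S_{std}$ (this is the relative Darboux normalization of a Legendrian curve meeting a surface along a prescribed leaf, entirely parallel to the straightening step in Lemma~\ref{lem:Darboux}); applying such contactomorphisms supported in disjoint neighborhoods of the four points straightens the arcs onto $I_x\cup I_y$ while keeping $S_{std}$ fixed. The composite of all these maps is the required $\phi_S$, and by construction $\phi_S(S)=S_{std}$.

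For the ``moreover'' part, once $\phi_S$ is built with $\phi_S(S)=S_{std}$ the statement is immediate; it is recorded in the weaker isotopy form because that is how Theorem~\ref{thm:singularconnectedsumdecomposition} uses it, when two such normalizations are compared. If instead one normalizes $S$ and $L$ in separate steps, so that the $L$-straightening perturbs $S$ to a nearby separating sphere $S'=\phi_S(S)$ of $I_x\cup I_y\setminus\{\mathbf{0}\}$, then $S'$ and $S_{std}$ have the same cyclic order and cobound a region contactomorphic to $S^2\times[0,1]$; a version of \cite[Theorem~4.9.4]{Ge} relative to the four distinguished leaves through the marked points yields a contact isotopy from $S'$ to $S_{std}$ that remains a separating sphere of $I_x\cup I_y\setminus\{\mathbf{0}\}$, i.e.\ is order-preserving. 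I expect this relative uniqueness to be the one genuinely delicate point: the classical $S^2\times[0,1]$ theorem only controls the characteristic foliation on the two ends, and one must upgrade it so that the four transverse Legendrian strands, and in particular the cyclic order they determine, are preserved throughout the isotopy; the rest of the argument is a routine family version of Lemma~\ref{lem:Darboux}.
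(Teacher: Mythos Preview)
Your strategy matches the paper's: first normalize the contact germ along $S$ so that $S$ becomes $S_{std}$ in $(\R^3\setminus\{\mathbf 0\},\xi_{rot})$, then straighten the four Legendrian strands onto $I_x\cup I_y$ by an adaptation of the isotopy in Lemma~\ref{lem:Darboux}. The difference lies in how the straightening and the ``moreover'' clause are handled.

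The paper does not try to keep $S_{std}$ fixed while straightening, nor does it invoke a relative $S^2\times[0,1]$ uniqueness theorem. Instead it makes the collar thin: after sending $S$ to $S_{std}$ it restricts to $S_{std}\times(-\epsilon,\epsilon)$ with $\epsilon$ chosen so small that along the four arcs one has $r'\neq 0$, $-\pi<\theta<\pi$, and $\epsilon\cdot\max|\theta'|\ll 1$. On this thin shell the four arcs are radially monotone, and the explicit rotational Legendrian isotopy $h_s$ from the proof of Lemma~\ref{lem:Darboux} (sending $(r,\theta,z)$ to $(r,(1-s)\theta+s\theta_{\mathrm{target}},(1-s)z)$) carries them to $I_x\cup I_y$. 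The point of the $\epsilon$--bound on $|\theta'|$ is precisely that during this isotopy the four $\theta$--values stay in disjoint angular sectors, so $S_{std}$ remains a separating sphere for $H_t(L)$ at every time $t$; this is exactly the order-preserving condition, obtained for free from the thinness rather than from any relative neighborhood theorem.

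Your alternative---move the four marked points by a foliation-preserving diffeomorphism of $S_{std}$, then locally straighten each arc while fixing $S_{std}$---is plausible and would make the ``moreover'' clause immediate, but as you yourself note, the local step (a Darboux-type normal form for a Legendrian arc transverse to a convex sphere, relative to the sphere) needs justification beyond what Lemma~\ref{lem:Darboux} literally provides. The paper's cruder $\epsilon$--argument avoids this entirely, so the ``genuinely delicate point'' you flag is an artifact of your route, not of the problem.
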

	\begin{proof}
		By definition, there is a neighborhood $N_0(S)\subset S^3$ and contactomorphism $\phi_0:N_0(S)\to\R^3\setminus\{\mathbf{0}\}$, where $\phi_0(S)=S_{std}$.
		Now consider $\phi_0(L)$ as a parametrized curve $(r(t),\theta(t),z(t))$ with respect to the cylindrical coordinate. Then we can perturb $L$ slightly to obtain $r'\neq 0$ on $S_{std}\cap L$.
		Therefore there is a small enough $\epsilon>0$ such that on $S_{std}\times(-\epsilon,\epsilon)\cap L\subset \R^3$,
		$$
		r'\neq 0,\quad
		-\pi<\theta<\pi,\quad
		\epsilon\cdot\max{|\theta'|}\ll1.
		$$
		
		We identify $S_{std}\times(-\epsilon,\epsilon)$ with $\R^3\setminus\{\mathbf{0}\}$ via $\phi$. Then the image $(\phi_1\circ\phi_0)(L)$ of $L$ is a union of four arcs which are strictly increasing in the radial directions, and by the same isotopies as in the proof of Lemma~\ref{lem:Darboux}, they can be isotoped to $I_x\cup I_y\setminus\{\mathbf{0}\}$ via $H_t$. Then by the choice of $\epsilon$, the order $\sigma(H_t((\phi_1\circ\phi_0)(L)),S_{std})$ is well-defined for all $t$, and we can let $\phi_2=H_1$.
		
		We define $N(S)$ as $\phi_0^{-1}(S_{std}\times(-\epsilon,\epsilon))$, and $\phi_S$ as $\phi_2\circ\phi_1\circ\phi_0$. Then $N(S)$ and $\phi_S$ are the desired neighborhood and contactomorphism.
	\end{proof}

	We consider a decomposition for $L\in\LSK$ which is an inverse of the singular connected sum.
	At first, suppose a separating sphere $S$ of $L$ is given.
	If we assign $+$ or $-$ for each point in $S\cap L$ according to the orientation of $L$, 
	then the sign of $\sigma_{cyc}(L,S)$ is either $(+,+,-,-)$ or $(+,-,+,-)$\footnote{This corresponds to the {\em unoriented singular connected sum} as before. See \S\ref{sec:unori}.} up to cyclic permutations.
	However, the latter case $(+,-,+,-)$ is not the configuration we want
	because a singular connected sum never gives this kind of order.
	
	We define a non-cyclic {\em order} $\sigma(L,S)$ by a representative of $\sigma_{cyc}(L,S)$ 
	whose signs realize $(+,+,-,-)$, when it is possible.
	Note that this sign configuration coincides with Definition~\ref{def:order}.
	
	Now we consider a separating sphere $S_0=\phi_S^{-1}(S_{std})$ given by the lemma above.
	Then $S_0$ bounds two 3-balls $B_1$ and $B_2$, which are contactomorphic to $B_{std}$ via $\phi_1$ and $\phi_2$.
	We may assume that $\phi_i$ and $\phi_S$ coincide on $B_i\cap N(S)$. Therefore $\phi_i(L\cap B_i)$ satisfies the conditions for the Legendrian tangle.
	
	We define two Legendrian singular links $L_i$ as closures of tangles $\phi_i(L\cap B_i)$, or equivalently,
	$$
	L_i=(L\cap B_i)\coprod_{-\partial\phi_S} (I_x\cup I_y)\subset B_i\coprod_{-\partial\phi_S} B_{std}=S^3
	$$
	where $-\partial\phi_S=(-_{xy}\circ\phi_S):S_0\to S_{std}$ is a composition of $\phi_S$ and $-_{xy}$, and it maps
	$\sigma(L,S_0)$ to $-\sigma(I_x\cup I_y,\mathbf{0})$.
	
	\begin{prop}
		Let $L, S$ and $L_i$'s be as above.
		Then $L=(L_1,\mathbf{0})\otimes(L_2,\mathbf{0})$.
	\end{prop}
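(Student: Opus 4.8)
The plan is to read off $(L_1,\mathbf{0})\otimes(L_2,\mathbf{0})$ directly from the definition, using the freedom guaranteed by Theorem~\ref{thm:singularconnectedsum} to choose convenient standard neighborhoods of the two singular points $\mathbf{0}$: the natural choice is the copy of $B_{std}$ already built into each $L_i$. Indeed, by construction $L_i=(L\cap B_i)\coprod_{-\partial\phi_S}(I_x\cup I_y)$ inside $B_i\coprod_{-\partial\phi_S}B_{std}$, and the glued-in copy of $B_{std}$ meets $L_i$ in exactly the two arcs $I_x\cup I_y$ crossing at $\mathbf{0}$. Hence this copy of $B_{std}$, with the tautological identification, is a standard neighborhood $B_{\mathbf{0}}$ of $\mathbf{0}\in\P(L_i)$ in the sense of Lemma~\ref{lem:Darboux}; moreover $\phi_{L_i,\mathbf{0}}$ carries $\sigma(L_i,\mathbf{0})$ to $\sigma(I_x\cup I_y,\mathbf{0})$, precisely because $-\partial\phi_S$ was chosen to carry $\sigma(L,S_0)$ to $-\sigma(I_x\cup I_y,\mathbf{0})$. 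By Theorem~\ref{thm:singularconnectedsum} the singular connected sum is independent of these choices, so it suffices to compute it with this one.

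With this choice, the first step is to remove the two standard neighborhoods: $L_i\setminus(L_i\cap\mathring B_{\mathbf{0}})$ is exactly the Legendrian tangle $L\cap B_i$ sitting in the ball $B_i$, whose four boundary points are the points of $L\cap S_0$. The second step is to identify the regluing. By definition $\otimes$ then glues $B_1$ to $B_2$ along their boundary spheres by $\phi=\phi_{L_2,\mathbf{0}}^{-1}\circ(-_{xy})\circ\phi_{L_1,\mathbf{0}}$; but with our choice the boundary sphere of each $B_i$ is identified with $S_{std}$ through the gluing map $-\partial\phi_S=-_{xy}\circ\phi_S$ used to form $L_i$, so the factor $-_{xy}$ appearing in $\phi$ interacts with the factors $-_{xy}$ in $-\partial\phi_S$ and with the orientation-reversing clause in the definition of $\otimes$ so as to leave precisely the identification of $\partial B_1$ with $\partial B_2$ that comes from regarding both as the single separating sphere $S_0\subset S^3$ — the orientation-reversing clause being exactly what compensates for the opposite orientations $B_1$ and $B_2$ induce on $S_0$. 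Thus the four boundary points of the two tangles are matched to one another respecting $\sigma(L,S_0)$, the tangles $L\cap B_1$ and $L\cap B_2$ reassemble into $L$, and since $S_0$ is a separating sphere its characteristic foliation is standard, so by Colin's gluing theorem — exactly as in the proof of Theorem~\ref{thm:singularconnectedsum} — the reglued contact manifold is $(S^3,\xi_{std})$. Hence $(L_1,\mathbf{0})\otimes(L_2,\mathbf{0})=L$ in $\LSK$. Finally, the singular points of the connected sum are $(\P(L_1)\setminus\{\mathbf{0}\})\sqcup(\P(L_2)\setminus\{\mathbf{0}\})=(\P(L)\cap B_1)\sqcup(\P(L)\cap B_2)=\P(L)$, and since all of the surgery takes place inside the removed copies of $B_{std}$, away from these points, the orders at the remaining singular points are untouched.

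The step I expect to be the main obstacle is the bookkeeping in the second step: one must track carefully the chain of identifications $\partial B_i\cong S_0\cong S_{std}$, the two occurrences of $-_{xy}$ (the one defining $-\partial\phi_S$, which closes up the tangles into the $L_i$, and the one in the definition of $\otimes$), and the orientation-reversing convention, so as to be certain that the four points of $L\cap S_0$ — and the order $\sigma(L,S_0)$ representing $\sigma_{cyc}(L,S)$ — are matched to themselves, and hence that the regluing yields $L$ rather than some other way of joining the two tangles. Everything else is a routine unwinding of the definitions, together with the well-definedness already established in Theorem~\ref{thm:singularconnectedsum} and Colin's gluing theorem.
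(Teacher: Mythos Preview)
Your proposal is correct and takes essentially the same approach as the paper: both invoke the well-definedness of the singular connected sum (Theorem~\ref{thm:singularconnectedsum}) to choose the glued-in copies of $B_{std}$ as the standard neighborhoods of $\mathbf{0}$ in each $L_i$, after which the operation visibly undoes the decomposition. The paper's proof is in fact a single sentence (``This follows obviously from the well-definedness of the singular connected sum''), so your version simply unpacks in detail what the authors leave to the reader.
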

	\begin{proof}
		This follows obviously from the well-definedness of the singular connected sum.
	\end{proof}
	
	We prove Theorem~\ref{thm:singularconnectedsumdecomposition}, the well-definedness of the singular connected sum decomposition up to order-preserving contact isotopy.
	\begin{proof}[Proof of Theorem~\ref{thm:singularconnectedsumdecomposition}]
		Let $S$ and $S'$ be separating spheres of $L$ which are order-preserving contact isotopic.
		We choose $S_0$ and $S_0'$ as before and obtain the singular connected summands $L_i$ and $L_i'$ by using $S_0$ and $S_0'$.
		Then it suffices to show that $L_i=L_i'$.
		There are two parts where the ambiguities can occur, but we may assume that $S=S_0$ and $S'=S_0'$. In other words, both $\phi_S(S)$ and $\phi_{S'}(S')$ are $S_{std}$.
		
		Let $H_t$ be the order-preserving contact isotopy between $S$ and $S'$, and let $B_i'$ be two 3-balls that $S'$ bounds.
		Then without loss of generality, we may assume that $S$ and $S'$ are disjoint and bound a subspace diffeomorphic to $S^2\times I$, by dividing the interval $I=[0,1]$ and by the convexity of $H_t(S)$ for all $t$, see \cite[Lemma~4.12.3~(ii)]{Ge}.
		
		Therefore there is a contact embedding $\iota:S^2\times I\to S^3$ such that $\iota_0(S^2)=S$ and $\iota_1(S^2)=S'$, and induce the isomorphisms between characteristic foliations.
		Hence $B_1\coprod_{\iota_0} (S^2\times I) =B_1'$ and $(S^2\times I)\coprod_{\iota_1} B_2' = B_2$.
		Moreover, it is obvious that $\iota^{-1}(L)$ is Legendrian in $S^2\times I$ and the order $\sigma(\iota^{-1}(L), S^2\times\{t\})$ is well-defined for all $t$.
		
		Then we consider the singular Legendrian link $L_{S,S'}$ defined by
		\begin{align*}
			L_{S,S'}=&(I_x\cup I_y)\coprod_{-\partial\phi_S\circ \iota_0} \iota^{-1}(L)\coprod_{-\partial\phi_{S'}\circ \iota_1} (I_x\cup I_y)\\
			\subset& B_{std}\coprod_{-\partial\phi_S\circ \iota_0} S^2\times I\coprod_{-\partial\phi_{S'}\circ \iota_1}B_{std}=S^3.
		\end{align*}
		
		\begin{lem}
			The singular Legendrian link $L_{S,S'}$ is the same as $L_{\bigcirc\!\!\!\bigcirc}$.
		\end{lem}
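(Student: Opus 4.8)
The plan is to show that the Legendrian tangle $\iota^{-1}(L)$ inside the contact $S^2\times I$ is, up to a contact isotopy fixing the two boundary spheres, the \emph{product tangle} — four Legendrian arcs, each meeting every level sphere $S^2\times\{t\}$ in exactly one point — and then to check that capping this product tangle off at both ends by copies of $(B_{std},I_x\cup I_y)$ along the prescribed gluing maps reproduces precisely $L_{\bigcirc\!\!\!\bigcirc}$.

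First I would make the middle piece concrete. Every $\iota_t(S^2)$ is convex with standard characteristic foliation, since it is a separating sphere of $L$. So, as in the proof of the separating sphere lemma and of Theorem~\ref{thm:singularconnectedsum}, the uniqueness of the germ of a contact structure near a convex surface together with \cite[Theorem 4.9.4]{Ge} identifies $(S^2\times I,\xi)$ with a thin standard spherical shell $\{\rho_0\le r^4+4z^2\le\rho_1\}\subset(\R^3\setminus\{\mathbf{0}\},\xi_{rot})$, carrying each $\iota_t(S^2)$ to a standard sphere. Under this identification, the hypothesis that the order $\sigma(\iota^{-1}(L),S^2\times\{t\})$ is well-defined for all $t$ says exactly that, after a small perturbation fixing the boundary, each of the four strands of $\iota^{-1}(L)$ is strictly monotone in the radial direction. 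Running the same rescaling isotopies as in the proof of Lemma~\ref{lem:Darboux}, now simultaneously on every level sphere — which is legitimate because the shell has been chosen thin in the $\theta$-direction by subdividing $I$ — carries the tangle onto $(I_x\cup I_y)\cap\{\rho_0\le r^4+4z^2\le\rho_1\}$, the product tangle, through Legendrian tangles keeping the order defined throughout.

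Next I would assemble the three pieces. Gluing $(B_{std},I_x\cup I_y)$ onto the standard shell with its product tangle, along a characteristic-foliation- and order-preserving diffeomorphism, merely enlarges the ball: the result is again a contact $3$-ball with standard convex boundary whose tangle is four Legendrian arcs running from a single interior point out to the boundary, hitting it at $\pm\mathbf{0}_x,\pm\mathbf{0}_y$ in the standard cyclic order — that is, another copy of $(B_{std},I_x\cup I_y)$. Carrying this out at both ends and tracking the gluing data — the maps $-\partial\phi_S\circ\iota_0$ and $-\partial\phi_{S'}\circ\iota_1$ are built precisely so that their composites with the shell identifications are order-reversing, $-_{xy}$-type maps — shows that $L_{S,S'}$ is obtained by gluing two copies of $(B_{std},I_x\cup I_y)$ exactly as in Definition~\ref{def:identity}. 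Hence $L_{S,S'}=L_{\bigcirc\!\!\!\bigcirc}$.

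The main obstacle is the first step: turning the combinatorial fact ``the order is defined for all $t$'' into an honest contact isotopy straightening the tangle to the product tangle. Realising $(S^2\times I,\xi)$ as a thin standard shell and deducing radial monotonicity is where the convex-surface and $S^2\times I$ uniqueness inputs are needed, and one must check that the straightening, performed fibrewise over $I$, neither creates $\theta$-winding nor destroys the well-definedness of the order at any intermediate stage — the choice of a sufficiently thin shell, already arranged by subdividing $I$, is exactly what makes this go through. Everything after that is bookkeeping with the gluing maps.
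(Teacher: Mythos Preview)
Your approach is correct in spirit but takes a genuinely different route from the paper's, and the paper's is considerably slicker.

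The paper does not try to straighten the tangle at all. Instead it observes that to identify $L_{S,S'}$ with $L_{\bigcirc\!\!\!\bigcirc}$ it suffices to exhibit a single convex sphere with standard characteristic foliation containing $L_{S,S'}$. It builds this sphere directly: near each of the two singular points it takes a standard disc $D_i$ (containing $I_x\cup I_y$, with one elliptic singularity), and in the middle piece $S^2\times I$ it uses the well-definedness of the cyclic order at every level $t$ to pick a circle $S^1_t\subset S^2\times\{t\}$ transverse to the characteristic foliation and passing through the four points of $\iota^{-1}(L)\cap(S^2\times\{t\})$. Sweeping out these circles gives an annulus $A$ with nonsingular characteristic foliation; gluing $D_1\cup A\cup D_2$ yields the desired sphere.

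Your approach --- identifying $(S^2\times I,\xi)$ with a standard shell \emph{level by level} and then straightening the four strands to the product tangle --- buys you an explicit contact isotopy, which is not needed for the lemma. The cost is the technical point you yourself flag: arranging that the identification with the shell respects the full family $\{S^2\times\{t\}\}$, not just the boundary spheres, and controlling $\theta$-winding during the fibrewise straightening. These can be handled (subdivide $I$, use Giroux flexibility to match foliations, then a transverse contact vector field), but they are real work. The paper sidesteps all of it by constructing a surface rather than an isotopy: the order hypothesis is used only once, pointwise in $t$, to locate the transversal circle $S^1_t$.
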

		\begin{proof}
			It suffices to show that $L_{S,S'}$ lies in a sphere whose characteristic foliations are standard.
			We construct such a sphere $S_{S,S'}$ as follows.
			
			Choose two standard discs $D_1, D_2$ containing $I_x\cup I_y$'s in the standard neighborhood of two singular points in $L_{S,S'}$. Then the characteristic foliation on each $D_i$ has exactly one singularity, which is elliptic.
			Since the order $\sigma(\iota^{-1}(L), S\times \{t\})$ is well-defined for each $t\in[0,1]$, there exists a circle $S^1_t\subset S\times\{t\}$ which is transverse to foliations on $S\times\{t\}$ and passes through four intersection points $\iota^{-1}(L)\cap (S^2\times\{t\})$.
			We may choose a family $S^1_t$ of circles as varying smoothly by $t$.
			
			Hence it defines an annulus $A$ whose characteristic foliations has no singularity by definition, and we obtain the desired sphere $S_{S,S'}$ by gluing two discs $D_1, D_2$.
		\end{proof}
		
		This lemma directly implies that 
		\begin{align*}
			L_1&=(L\cap B_1)\coprod_{-\partial\phi_S} (I_x\cup I_y)\\
			&=(L_1,\mathbf{0}) \otimes (L_{\bigcirc\!\!\!\bigcirc},\mathbf{0})=(L_1,\mathbf{0}) \otimes (L_{S,S'},\mathbf{0})\\
			&= (L\cap B_1)\coprod_{\iota_0} (S^2\times I)\coprod_{-\partial\phi_{S'}\circ \iota_1} (I_x\cup I_y)\\
			&= (L\cap B_1')\coprod_{-\partial\phi_{S'}\circ \iota_1} (I_x\cup I_y) = L_1'
		\end{align*}
		Similarly, we have $L_2=L_2'$ by the same argument, and Theorem~\ref{thm:singularconnectedsumdecomposition} is proved.
	\end{proof}
	
	Recall that for $K\in \SK$, the existence of an embedded sphere $S$ with $|K\cap S|=0$ or $2$
	ensures that $K$ can be decomposed into $K_i$'s via the disjoint union or the usual connected sum, respectively.
	For $L\in\LSK$, we can decompose $L$ further via a separating sphere $S$ with well-defined $\sigma(L,S)$. 
	
	\begin{rmk}
		One may ask whether similar notions of the singular connected sum and decomposition are possible in more general settings such as $\SK$ or $4$-valent graphs $\mathcal{V}_4$.
		
		In the case of $\SK$, the singular connected sum is not well-defined. Because of the lack of order there are two possibilities. The singular connected sum decomposition in $\SK$, however, has as many possibilities as the mapping classes of $S^2$ with $4$-marked points.
		
		On the contrary, the decomposition in $\mathcal{V}_4$ is well-defined as in \cite{M}, since the flexibility of vertices excludes ambiguities. A corresponding operation to the singular connected sum in $\mathcal{V}_4$ also has the same ambiguities as the mapping classes of $S^2$ with $4$-marked points.
	\end{rmk}
	\begin{table}[ht]
	\setlength{\tabcolsep}{1.4pc}
		\begin{tabular}{c|c|c|c}
		& $\SK$ & $\mathcal{V}_4$ & $\LSK$\\
		\hline
		 singular connected sum & 2 & $\infty$ & 1\\
		\hline
		 decomposition & $\infty$ & 1 & 1 \\
		\end{tabular}
		\vspace{1em}
		\caption{The numbers of ambiguities for the singular connected sum and the decomposition}
		\label{table:ambiguities}
	\end{table}
	
		\subsection{Units of singular connected sum}
	Now we consider unit elements of the singular connected sum, which is a left (or right) summand of the identity.
	
	\begin{defn}
		A pair $(L,p)$ with $L\in\LSK$ and $p\in\mathcal{P}(L)$ is a {\em unit} if 
		there exists $L'$ and $p'\in\mathcal{P}(L')$ such that
		$$
		(L,p)\otimes(L',p')=L_{\bigcirc\!\!\!\bigcirc},
		$$
		and $(L',p')$ is an {\em  inverse of $(L,p)$}.
	\end{defn}
	Note that the notion of the unit is different from the singular connected summand. 
	The difference occurs when $L_{\bigcirc\!\!\!\bigcirc}$ is decomposed into 3 or more summands, and so a unit is a special kind of summand of $L_{\bigcirc\!\!\!\bigcirc}$ so that its degree is either 1, 2 or 3.
	
	While there is only one summand under the connected sum $\#$ of the identity element $L_\bigcirc$, which is $L_\bigcirc$ itself, 
	there are infinitely many units for degree 1 and 3 including all closures of tangles for resolutions, where each of them has infinitely many inverses.
	
	Nevertheless, we can say that there are no units of degree 2 except the identity. In order to clarify the argument, we need the following preparations.
	
%

	\begin{lem}\label{lem:mirrorpair}
		Let $L$ be a unit of degree 2. Then $L\in \mathcal{L}(\bigcirc\!\!\!\bigcirc)$ and $tb(L)=-2$.
	\end{lem}
	\begin{proof}
		Note first that $L_{\bigcirc\!\!\!\bigcirc}$ can be considered as the trivial $\theta_4$-curves and so is its factor $L$ as a $\theta_4$-curve. See \cite[Lemma~2.1]{M}. Therefore $\|L\|$ is the same as $\bigcirc\!\!\!\bigcirc$ up to twisting at vertices, 
		$
		\vcenter{\hbox{\includegraphics{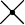}}} ~~\longleftrightarrow~~ 		\vcenter{\hbox{\includegraphics{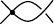}}}  
		$,
		which implies that $\|L\|$ can be represented by a four strand braid $\beta_L$ on the sphere. It is known that $\beta_L$ is well-defined up to conjugate and the following two moves.\footnote{These moves correspond to the generators for {\em Mexican plaits} as described in \cite{Mu}.}
		
		\[
		\xymatrix@R=1pc{
			\vcenter{\hbox{
\begingroup%
  \makeatletter%
  \providecommand\color[2][]{%
    \errmessage{(Inkscape) Color is used for the text in Inkscape, but the package 'color.sty' is not loaded}%
    \renewcommand\color[2][]{}%
  }%
  \providecommand\transparent[1]{%
    \errmessage{(Inkscape) Transparency is used (non-zero) for the text in Inkscape, but the package 'transparent.sty' is not loaded}%
    \renewcommand\transparent[1]{}%
  }%
  \providecommand\rotatebox[2]{#2}%
  \ifx\svgwidth\undefined%
    \setlength{\unitlength}{52.8bp}%
    \ifx\svgscale\undefined%
      \relax%
    \else%
      \setlength{\unitlength}{\unitlength * \real{\svgscale}}%
    \fi%
  \else%
    \setlength{\unitlength}{\svgwidth}%
  \fi%
  \global\let\svgwidth\undefined%
  \global\let\svgscale\undefined%
  \makeatother%
  \begin{picture}(1,0.99999934)%
    \put(0,0){\includegraphics[width=\unitlength,page=1]{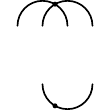}}%
    \put(0.45670995,0.58928485){\color[rgb]{0,0,0}\makebox(0,0)[lb]{\smash{$\beta$}}}%
    \put(0,0){\includegraphics[width=\unitlength,page=2]{beta_1.pdf}}%
  \end{picture}%
\endgroup%
}}\ar@{<->}[r] & \vcenter{\hbox{
\begingroup%
  \makeatletter%
  \providecommand\color[2][]{%
    \errmessage{(Inkscape) Color is used for the text in Inkscape, but the package 'color.sty' is not loaded}%
    \renewcommand\color[2][]{}%
  }%
  \providecommand\transparent[1]{%
    \errmessage{(Inkscape) Transparency is used (non-zero) for the text in Inkscape, but the package 'transparent.sty' is not loaded}%
    \renewcommand\transparent[1]{}%
  }%
  \providecommand\rotatebox[2]{#2}%
  \ifx\svgwidth\undefined%
    \setlength{\unitlength}{52.8bp}%
    \ifx\svgscale\undefined%
      \relax%
    \else%
      \setlength{\unitlength}{\unitlength * \real{\svgscale}}%
    \fi%
  \else%
    \setlength{\unitlength}{\svgwidth}%
  \fi%
  \global\let\svgwidth\undefined%
  \global\let\svgscale\undefined%
  \makeatother%
  \begin{picture}(1,0.99999934)%
    \put(0,0){\includegraphics[width=\unitlength,page=1]{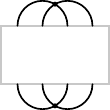}}%
    \put(0.45670995,0.4567091){\color[rgb]{0,0,0}\makebox(0,0)[lb]{\smash{$\beta$}}}%
  \end{picture}%
\endgroup%
}} \ar@{<->}[r]& \vcenter{\hbox{
\begingroup%
  \makeatletter%
  \providecommand\color[2][]{%
    \errmessage{(Inkscape) Color is used for the text in Inkscape, but the package 'color.sty' is not loaded}%
    \renewcommand\color[2][]{}%
  }%
  \providecommand\transparent[1]{%
    \errmessage{(Inkscape) Transparency is used (non-zero) for the text in Inkscape, but the package 'transparent.sty' is not loaded}%
    \renewcommand\transparent[1]{}%
  }%
  \providecommand\rotatebox[2]{#2}%
  \ifx\svgwidth\undefined%
    \setlength{\unitlength}{52.8bp}%
    \ifx\svgscale\undefined%
      \relax%
    \else%
      \setlength{\unitlength}{\unitlength * \real{\svgscale}}%
    \fi%
  \else%
    \setlength{\unitlength}{\svgwidth}%
  \fi%
  \global\let\svgwidth\undefined%
  \global\let\svgscale\undefined%
  \makeatother%
  \begin{picture}(1,0.99999934)%
    \put(0,0){\includegraphics[width=\unitlength,page=1]{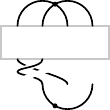}}%
    \put(0.45670995,0.58928485){\color[rgb]{0,0,0}\makebox(0,0)[lb]{\smash{$\beta$}}}%
    \put(0,0){\includegraphics[width=\unitlength,page=2]{beta_2.pdf}}%
  \end{picture}%
\endgroup%
}}\\
			\beta\sigma_1\sigma_3^{-1}\ar@{<->}[r]& \beta \ar@{<->}[r] & \beta\sigma_1\sigma_2\sigma_1
		}
		\]
	Then one can choose $\beta_L$ so that $\beta_L$ is $\sigma_1$-free and its standard projection is an alternating diagram with $C$ crossings\footnote{This process is exactly the same as the way how to obtain an {\em alternating normal form} for two bridge links. See \cite{Mu2} for detail.}.
	Moreover, there exists a pair $(R_{\eta_1},R_{\eta_2})$ of resolutions so that the number of crossings in $R_{\eta_1}(R_{\eta_2}(L,p),q)$ is exactly $C+2$.
		
		
		For an inverse $L'$ of $L$, since $L'$ is represented by $\beta_L^{-1}$, we have the mirror pair of reduced alternating diagrams for resolutions:
		$$
		K_1:=\|R_{\eta_1}(R_{\eta_2}(L_,p),q)\|,\qquad
		K_2:=\|R_{-\eta_1}(R_{-\eta_2}(-L',p'),q')\|,
		$$
		whose crossing numbers are exactly $C+2$.
		%
		Since both $K_1$ and $K_2$ are alternating links, by \cite[Corollary 1.4]{Ta}, the sum of maximal Thurston-Bennequin numbers $\overline{tb}$ of the mirror pair $(K_1,K_2)$ satisfies
		$$
		\overline{tb}(K_1)+\overline{tb}(K_2)= -c(K_i)-2,
		$$
		where $c(K)$ is the minimal crossing number of $K$.
		
		Now we obtain
		\begin{align*}
			-2=&tb(L_{\bigcirc\!\!\!\bigcirc})=tb(L)+tb(L')+2\\
			=&tb(R_{\eta_1}(R_{\eta_2}(L,p),q))+tb(R_{-\eta_1}(R_{-\eta_2}(-L',p'),q'))+2\\
			\leq&\overline{tb}(K_1)+\overline{tb}(K_2)+2\\
			=&-c(K_i)=-C-2.
		\end{align*}
		Therefore $C=0$ and so $\beta_L=1\in B_4(S^2)$, which implies that both $L$ and $L'$ are contained in $\mathcal{L}(\bigcirc\!\!\!\bigcirc)$. Since $L_{\bigcirc\!\!\!\bigcirc}$ has the maximal $tb$, neither $tb(L)$ nor $tb(L')$ can not exceed $tb(L_{\bigcirc\!\!\!\bigcirc})=-2$. Therefore they must be equal to $-2$ as claimed.
	\end{proof}

\begin{thm}\label{thm:L00}
A Legendrian singular link $L\in \mathcal{L}(\bigcirc\!\!\!\bigcirc)$ has the maximal $tb$, i.e., $tb(L)=-2$, if and only if $L=L_{\bigcirc\!\!\!\bigcirc}$.
\end{thm}		
\begin{proof}
The ``if'' part is obvious.

		Assume that $L\in\mathcal{L}(\bigcirc\!\!\!\bigcirc)$ with $tb(L)=-2$. We can take a sphere $S\subset\R^3$ containing $L$. We then may assume that $S_{\xi_{rot}}$ is Morse-Smale and has two elliptic singularities at $p$ and $q$.
		
		Let $a_1, a_2, a_3, a_4$ be Legendrian arcs which are the closures of $L\setminus\{p,q\}$ and are labelled by $\sigma(L,p)$.
		Now let $t_i$ be a (clock-wise) twisting number of contact planes from $p$ to $q$ along $a_i$ with respect to $S$,
		then $t_i$ should be contained in $\frac{1}{2}\Z$.
		
		Note that $R_{+}(R_{-}(L_,p),q)$, $R_{0}(R_{0}(L_,p),q)$, and ${R_{0}(R_{0}(\overline{L}_,p),q)}$ are the two components unlinks with $tb=-2$, where $\overline{L}$ is the same as $L$ with one component reversed.
		This implies that each component of them is the unknot with $tb=-1$.
		By the definition of Thurston-Bennequin number, we have $t_i+t_j=-1$ where $i\neq j$ and $i,j\in\{1,2,3,4\}$.
		
		The only possible case is that $t_i=-\frac{1}{2}$ for $i=1,\dots,4$.
		Then we can perturb $S$ in such a way that $S_{\xi_{rot}}|_{a_i}$ has singularities only at $p, q$.
		By Giroux elimination process, we may assume that $S$ has only two elliptic singularities at $p,q$ which is contactomorphic to $S_{std}$. 
		This completes the theorem.
	\end{proof}

\begin{proof}[Proof of Theorem \ref{thm:unit}]
		As mentioned right after Definition \ref{def:identity}, the identity is $L_{\bigcirc\!\!\!\bigcirc}$.
		
		Let $L$ be a unit of degree $2$, then by Lemma \ref{lem:mirrorpair}, we have $tb(L)=-2$ and $L\in\mathcal{L}(\bigcirc\!\!\!\bigcirc)$.
		Hence Theorem \ref{thm:L00} implies that $L=L_{\bigcirc\!\!\!\bigcirc}$. 
\end{proof}

		\begin{rmk}
			As shown in Table~\ref{table:ambiguities}, there are infinitely many singular links of degree 2 in $\SK$ which decompose $\bigcirc\!\!\!\bigcirc$, and so which are units.
%
			Indeed, 
			for any given rational tangle we can make a corresponding degree 2 unit.
		\end{rmk}

	\section{Diagrammatic interpretations of singular connected sum}
	\subsection{Tangle representatives}
	A {\em Legendrian singular tangle $T$} is an oriented Legendrian immersion 
	$$
	T:\left(2I\coprod nS^1,2\partial I\right)\to(B_{std},\{\mathbf{0}_x,\mathbf{0}_y,-\mathbf{0}_x,-\mathbf{0}_y\})
	$$
	such that $T$ has only double point singularities in the interior and intersects $\partial B_{std}$ perpendicularly at $\{\mathbf{0}_x,\mathbf{0}_y,-\mathbf{0}_x,-\mathbf{0}_y\}$ matching the orientation with $x$ and $y$-axes.
	
	Then the {\em (singular) closure $\widehat T\in\LSK$ of $T$} is obtained by 
	$$
	(S^3,\widehat T)=(B_{std},T)\coprod_\phi(B_{std},I_x\cup I_y),
	$$
	where $\phi$ is a diffeomorphism on $S_{std}$ preserving the characteristic foliation such that
	$$\phi((\mathbf{0}_x,\mathbf{0}_y,-\mathbf{0}_x,-\mathbf{0}_y))=-\sigma(I_x\cup I_y,\mathbf{0})$$
	as before. See Figure~\ref{fig:tangle} for a pictorial definition of a tangle closure.
	
	\begin{figure}[ht]
		$$
		\widehat{\vcenter{\hbox{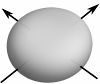}}}=
		\vcenter{\hbox{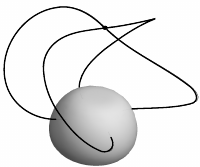}}
		$$
		\caption{A Legendrian Tangle $T$ and its closure $\widehat T$}
		\label{fig:tangle}
	\end{figure}
	
	We say that two tangles $T_1$ and $T_2$ are {\em equivalent} if they are contact isotopic relative to their boundaries, or equivalently, two pairs $(\widehat T_1,\mathbf{0})$ and $(\widehat T_2,\mathbf{0})$ are contact isotopic. If there is a contact isotopy between closures, then we can modify the isotopy so that $\mathbf{0}$ and its neighborhood are fixed during the isotopy and the support is contained in $\mathring{B}_{std}$. The other direction is clear.
	
	For a Legendrian singular tangle $T$, let $(\widehat T,\mathbf{0})$ be the closure of $T$ and $\phi:S^3\to S^3$ be a contactomophism such that $\phi(\mathbf{0})=\infty\in S^3$. Then 
	\[
	\phi(T)\setminus\{\infty\}\subset (S^3\setminus\{\infty\},\xi_{std})=(\R^3,\xi_0),
	\]
	and the {\em front projection $\pi_F(T,\phi)$} of $T$ with respect to $\phi$ is defined as
	\[
	\pi_F(T,\phi):=\pi_F(\phi(T)\setminus\{\infty\}),
	\]
	and we call $\phi$ a {\em way of the projection} of $F$.
	
	Similar to the front projection of a singular point, we obtain four types of front projections of a tangle $T$ with respect to some contactomorphisms $\phi_N,\phi_E,\phi_S$ and $\phi_W$ according to the local pictures near $\mathbf{0}$ as depicted in Figure~\ref{fig:singularpoints}.
	Intuitively, these also correspond to the ways in which the tangles are projected. Figure~\ref{fig:fronttangle} shows the corresponding projections.
	
	\begin{figure}[ht]
		\begin{align*}
			\pi_F(T,\phi_N)=\!\!\!\!\!\vcenter{\hbox{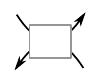}}\qquad&
			\pi_F(T,\phi_E)=\!\!\!\!\!\vcenter{\hbox{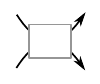}}\\ 
			\pi_F(T,\phi_S)=\!\!\!\!\!\vcenter{\hbox{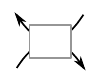}}\qquad&
			\pi_F(T,\phi_W)=\!\!\!\!\!\vcenter{\hbox{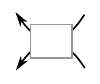}}
		\end{align*}
		\caption{Front projections for a Legendrian tangle $T$}
		\label{fig:fronttangle}
	\end{figure}
	
	It is obvious that if two front projections of tangles are connected by a sequence of Reidemeister moves $\rm(I)\sim(VI)$, then they are equivalent. However all Reidemeister moves preserve the orientation at the boundary of tangle, so we need a global move $\rm(VI_T)$ as depicted in Figure~\ref{fig:tanglemove}, which changes the way of the projection and therefore the configuration at the boundary. In the closure $\widehat T$, this move is nothing but a Reidemeister move $\rm(VI)$ at $\mathbf{0}$.
	
	\begin{figure}[ht]
		$$
		\xymatrix{
			\vcenter{\hbox{\input{F_tangle_RR.pdf_tex}}}\ar@{<->}[rr]^-{\rm(VI_T)}& &
			\vcenter{\hbox{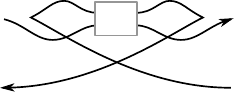}}
		}
		$$
		\caption{A global move $\rm(VI_T)$ for tangles}
		\label{fig:tanglemove}
	\end{figure}
	
	\begin{lem}
		Let $T_1$ and $T_2$ be two Legendrian singluar tangles. Suppose that $D_1=\pi_F(T_1,\phi_1)$ and $D_2=\pi_F(T_2,\phi_2)$ are regular front projections with respect to some $\phi_1$ and $\phi_2$, respectively.
		Then $T_1$ and $T_2$ are equivalent if and only if $D_1$ and $D_2$ are connected by a sequence of (local) Reidemeister moves $\rm(I)\sim(VI)$, and the global move $\rm(VI_T)$.
	\end{lem}
	\begin{proof}
		The ``if'' part is obvious since the moves $\rm(I)\sim(VI)$ and $\rm(VI_T)$ can be realized via contact isotopy.
		
		Conversely, suppose that $T_1$ and $T_2$ are equivalent.
		Then by taking $\rm(VI_T)$ several times, we may assume that both $D_i$'s have the same configurations at the boundary.
		Since two tangles are contact isotopic relative to boundary, two diagrams $D_i$'s are connected by Reidemeister moves $\rm(I)\sim(VI)$ inside the standard 3-ball by Proposition~\ref{prop:frontmove}.
	\end{proof}
	
	\begin{lem}\label{lem:tangleclosure}
		Let $L\in \LSK$ and $p\in\P(L)$. Then there exists a tangle $T_{(L,p)}$ whose closure $\widehat T_{(L,p)}$ is equivalent to $L$, where the designated singular point $p$ corresponds to $\mathbf{0}$ of $\widehat T_{(L,p)}$.
	\end{lem}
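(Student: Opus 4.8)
The plan is to produce the tangle $T(L,p)$ by \emph{excising} a standard neighborhood of $p$ and re-closing what remains. By Lemma~\ref{lem:Darboux} there is a standard neighborhood $B_p$ of $p$ and a contactomorphism $\phi_p\colon(B_p,B_p\cap L)\to(B_{std},I_x\cup I_y)$ with $\phi_p(p)=\mathbf 0$. First I would note that $S:=\partial B_p$ is a separating sphere of $L$ in the sense of \S\ref{sec:singularconnectedsum}: via $\phi_p$ its characteristic foliation is that of $S_{std}$, hence standard; $S$ meets $L$ transversely at the four nearby points $\pm p_x,\pm p_y$; and those four points lie on four distinct leaves, which holds on the standard sphere by construction. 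Put $B':=S^3\setminus\mathring B_p$, so that $(S^3,L)=(B',B'\cap L)\cup_S(B_p,B_p\cap L)$.

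Next I would invoke the uniqueness of a tight contact structure on a $3$-ball with prescribed standard convex boundary — the same fact, via \cite[Theorem~4.9.4]{Ge} and Colin's theorem \cite{Co}, used in the proof of Theorem~\ref{thm:singularconnectedsum} and of the separating-sphere lemma of \S\ref{sec:singularconnectedsum}. It yields a contactomorphism $\psi\colon(B',S)\to(B_{std},S_{std})$ whose boundary restriction $\psi|_S$ may be prescribed to be an arbitrary foliation-preserving diffeomorphism $S\to S_{std}$; I would take $\psi|_S=\phi^{-1}\circ(\phi_p|_S)$, where $\phi$ is the foliation-preserving gluing diffeomorphism defining the closure, so that $\phi$ carries $(\mathbf 0_x,\mathbf 0_y,-\mathbf 0_x,-\mathbf 0_y)$ to $-\sigma(I_x\cup I_y,\mathbf 0)$. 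Then $T(L,p):=\psi(B'\cap L)$ meets $S_{std}$ transversely at $\{\pm\mathbf 0_x,\pm\mathbf 0_y\}$, and a further small contact isotopy supported near $S_{std}$, exactly as in the proof of Lemma~\ref{lem:Darboux}, makes $T(L,p)$ perpendicular to $S_{std}$ and oriented along the coordinate axes; hence $T(L,p)$ is a Legendrian singular tangle.

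Finally, transporting the splitting $(S^3,L)=(B',B'\cap L)\cup_S(B_p,B_p\cap L)$ through $\psi$ and $\phi_p$ turns the identity gluing along $S$ into the gluing along $\phi_p|_S\circ(\psi|_S)^{-1}=\phi$, so
$$
(S^3,L)=(B_{std},T(L,p))\coprod_\phi(B_{std},I_x\cup I_y)=\widehat{T(L,p)},
$$
and the singular point $\mathbf 0$ of the re-glued copy of $(B_{std},I_x\cup I_y)$ is $\phi_p^{-1}(\mathbf 0)=p$, as required. Conceptually, excising $B_p$ and closing up again is nothing but connect-summing $L$ with the identity $L_{\bigcirc\!\!\!\bigcirc}$ at $\mathbf 0$, which changes nothing.

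I expect the delicate point to be the boundary bookkeeping in the middle step: one must choose $\psi$ so as to be simultaneously a contactomorphism of balls, to carry $B'\cap L$ to strands perpendicular to $S_{std}$ with the correct orientations, and to place the four endpoints in the cyclic arrangement demanded by $\phi$, so that the reconstructed closure is $L$ itself and not, say, $L$ with one arc reversed or globally reflected. What makes this work is that all of this data is governed by the single equivariant invariant $\sigma(L,p)$, which is by definition the standard cyclic configuration of the nearby points on a standard sphere; once that is granted, the only residual freedom in $\psi|_S$ is a foliation-preserving self-diffeomorphism of $S_{std}$, and these suffice to realize the prescribed identification.
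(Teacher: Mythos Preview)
Your argument is correct and follows the same idea as the paper's proof, just with far more detail. The paper dispatches the lemma in three sentences: take a standard neighborhood $B_p$, declare $T(L,p)=L\cap(S^3\setminus B_p)$, and observe that by the definition of the closure $(L,p)=(\widehat{T(L,p)},\mathbf 0)$. What you have done is unpack the implicit identification of $S^3\setminus\mathring B_p$ with $B_{std}$ via a contactomorphism with prescribed boundary behavior, and check that the induced gluing on $S_{std}$ is exactly the map $\phi$ appearing in the definition of the closure; this is legitimate and arguably more honest, since the paper's one-line claim that ``the complement satisfies the tangle conditions'' silently uses the same uniqueness-of-tight-contact-structure input you cite. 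The separating-sphere digression and the final paragraph about residual freedom in $\psi|_S$ are not needed for the argument, but they do no harm.
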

	\begin{proof}
		Let $B_p$ be a standard neighborhood of $p$. Then the complement $T_{(L,p)}=L\cap(S^3\setminus B_p)$ satisfies the tangle conditions.
		Hence by the definition of a closure, $(L,p)$ is nothing but $(\widehat T_{(L,p)},\mathbf{0})$.
	\end{proof}
	
	Note that $\mathbf{0}\in B_{std}$ is a singular point of $\widehat T$ produced by the closure, and
	$\widehat T$ can be isotoped so that $T$ becomes arbitrarily small.
	This means that $\widehat T$ can be isotoped into a union of $L_{\bigcirc\!\!\!\bigcirc}$ and a small neighborhood $B_\infty$ of a singular point $\infty\in L_{\bigcirc\!\!\!\bigcirc}$.
	
	In Figure \ref{fig:tanglesumidentity}, the $xy$-plane in the first figure  corresponds to the dotted line in the second figure.
	There is a contact isotopy between second and third front projections which maps the horizontal dotted line to the vertical dotted line.
	Thus by this isotopy, the small ball $B_{\infty}$ containing $T$ is transformed to a neighborhood of $\infty$ in the each diagrams.
	
	Similarly, since two singular points $\{\mathbf{0},\infty\}$ in $L_{\bigcirc\!\!\!\bigcirc}$ are equivalent in the sense that there is a contact isotopy interchanging those points,
	we may also change the role of given tangle $T$ and singular point $\mathbf{0}$ in $\widehat T$.
	Therefore by Lemma~\ref{lem:tangleclosure}, $\widehat T$ has two special types of front projections, called {\em left and right normal forms at $\mathbf{0}$}, as depicted in Figure~\ref{fig:normalform}. 
	\begin{figure}[ht]
		{\scriptsize
			$$
			\vcenter{\hbox{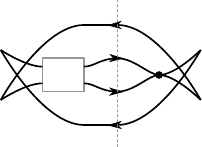}}=
			\widehat{\vcenter{\hbox{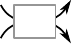}}}=
			\vcenter{\hbox{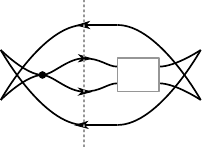}}
			$$
		}
		\caption{Left and right normal form of $\widehat T$}
		\label{fig:normalform}
	\end{figure}
	We can also consider left and right normal forms for each front projection of $T$ depicted in Figure~\ref{fig:fronttangle}.

	Here are the relationships between singular and {\em regular} closures of given tangle.
	The {\em regular} closures $D(T)$ and $N_\pm(T)$ of $T$ in the front projection are as depicted in Figure~\ref{fig:twoclosures}. 
	Note that these closures are mimics of the {\em denominator} and {\em numerator} closures of rational tangles. 
	Then by definition, $D(T)= R_0\left(\widehat T, \mathbf{0}\right)$ and $N_{\pm}(T)=R_{\pm}\left(\widehat T,\mathbf{0}\right)$. Hence, the singular closure can be thought of as a generalization of the regular closure of tangles.
	
	\begin{figure}[ht]
		\begin{align*}
			D\left(\vcenter{\hbox{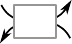}}\right)&=\vcenter{\hbox{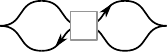}}=R_0\left(\widehat T,\mathbf{0}\right),\\
			N_+\left(\vcenter{\hbox{\input{F_tangle.pdf_tex}}}\right)&=\vcenter{\hbox{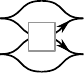}}=R_+\left(\widehat T,\mathbf{0}\right),\\
			N_-\left(\vcenter{\hbox{\input{F_tangle2.pdf_tex}}}\right)&=\vcenter{\hbox{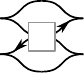}}=R_-\left(\widehat T,\mathbf{0}\right).
		\end{align*}
		\caption{Numerator and denominator closures of $T$}
		\label{fig:twoclosures}
	\end{figure}
	
	In particular, the simplest singular tangle $\vcenter{\hbox{\includegraphics{char_F_singularcrossingRL_dot.pdf}}}$ under $D$ and $N_{\pm}$ corresponds to 
	the simplest Legendrian singular knot $L_0$ and links $L_{\pm}$ having $tb(L_{\pm})=\pm1-2$, whose link types $K_{\pm}=\|L_{\pm}\|$ are called the {\em $\pm$-pinched Hopf links}.
	Notice that $L_\eta=R_\eta(L_{\bigcirc\!\!\!\bigcirc},\mathbf{0})$ and $R_{-\eta}(L_\eta)$ is the Legendrian unlink for any $\eta\in\{+,-,0\}$.
	\begin{figure}[ht]
		$$
		L_{+}=\vcenter{\hbox{\includegraphics{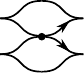}}},\qquad
		L_{-}=\vcenter{\hbox{\includegraphics{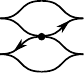}}}
		$$
		\caption{The pinched Hopf links $L_{\pm}$}
		\label{fig:pinchedhopflinks}
	\end{figure}
	
	\subsection{Singular connected sum in the projection}
	Let $(L_1,p)$ and $(L_2,q)$ be pairs of Legendrian singular links and singular points.
	Suppose that the front projections $\pi_F(L_1)$ and $\pi_F(L_2)$ are of left and right normal forms at $p$ and $q$, respectively.
	Then the front projection $\pi_F((L_1,p)\otimes(L_2,q))$ of singular connected sum of $(L_1,p)$ and $(L_2,q)$ is the diagrammatic concatenation of the left part of $(L_1,p)$ and the right part of the $(L_2,q)$ as depicted in Figure ~\ref{fig:connectedsumfront}.
	Note that the above diagrammatic gluing of the pair of nearby points coincides with the condition $\sigma(L_1,p)=-\sigma(L_2,q)$.
	For the reader's convenience we provide an abstract diagram for the singular connected sum in $\R^3$ in Figure~\ref{fig:connectedsumin3d}.
	\begin{figure}[ht]
		{\scriptsize
			$$
			\vcenter{\hbox{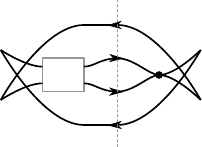}}
			\otimes
			\vcenter{\hbox{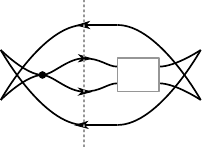}}=
			\vcenter{\hbox{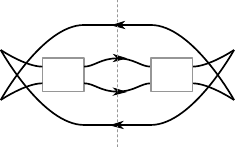}}
			$$
		}
		\caption{Singular connected sum in the front projection}
		\label{fig:connectedsumfront}
	\end{figure}

	\begin{figure}[ht]
		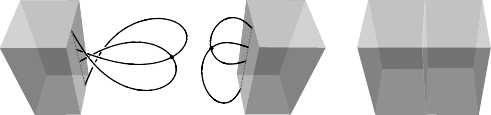
		\caption{Singular connected sum in $\R^3$}
		\label{fig:connectedsumin3d}
	\end{figure}
	
	The behaviour of the classical invariants $tb$ and $r$ under the singular connected sum are as follows.
	$$
	tb((L_1,p)\otimes(L_2,q))=tb(L_1)+tb(L_2)+2,\quad
	r((L_1,p)\otimes(L_2,q))=r(L_1)+r(L_2).
	$$
	Obviously, the set of singular points after a singular connected sum is as follows.
	$$
	\P((L_1,p)\otimes(L_2,q))= (\P(L_1)\setminus\{p\})\cup(\P(L_2)\setminus\{q\}).
	$$

	\subsection{Tangle replacements}
	We define an operation called the {\em tangle replacement} as follows. It is essentially same as the singular connected sum but is easier to describe.
	Indeed, normal forms are not necessary.
	
	\begin{defn}
		Let $L\in\LSK$, $p\in\P(L)$, and $T$ be a Legendrian tangle.
		The {\em tangle replacement $\langle (L,p),T\rangle$} of $L$ and $T$ is defined as the singular connected sum $(L,p)\otimes(\widehat T, \mathbf{0})$.
	\end{defn}
	
	Then in a diagrammatic view, this is nothing but the replacement of a small neighborhood of $p$ in $L$ with the given tangle $T$ with the obvious matching condition at the boundary.
	Note that there is no problem in realizing the resulting diagram as a Legendrian singular link since we can make $T$ sufficiently small. 
	Moreover the diagrammatic replacement is valid for both the front and the Lagrangian projections.
	See Figure~\ref{fig:tanglesum}.
	
	\begin{figure}[ht]
		$$
		\left\langle
		\vcenter{\hbox{\includegraphics{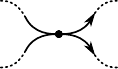}}},
		\vcenter{\hbox{\input{F_tangle.pdf_tex}}}
		\right\rangle=
		\vcenter{\hbox{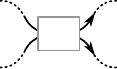}}
		$$
		\caption{A tangle replacement of Legendrian singular link $L$ with a tangle $T$}
		\label{fig:tanglesum}
	\end{figure}
	
	Moreover, the resolutions $R_\pm$, $R_0$, 
	can be interpreted as special cases of tangle replacements, described below\footnote{The $\infty$-resolution $R_\infty$ is an {\em unoriented tangle replacement} with $\vcenter{\hbox{\includegraphics[scale=0.3]{F_singularcrossingRL_zero.pdf}}}$. See \S\ref{sec:unori}.}.
	$$
	R_0=\left\langle - , \vcenter{\hbox{\includegraphics[scale=0.5]{F_singularcrossingRL_zero.pdf}}}\right\rangle,
	\quad R_+=\left\langle -,\vcenter{\hbox{\includegraphics[scale=0.5]{F_singularcrossingRL_pos.pdf}}}\right\rangle,
	\quad R_-=\left\langle-,\vcenter{\hbox{\includegraphics[scale=0.5]{F_singularcrossingRL_neg.pdf}}}\right\rangle.
	$$
	
	Equivalently, by Figure~\ref{fig:twoclosures}, we have the dual descriptions of resolutions as follows.
	$$
	R_\eta\left(\widehat -,\mathbf{0}\right)=\langle L_\eta, -\rangle,
	\quad
	\eta\in\{+,-,0\}
	$$
	
	In general, for each tangle $T$, a tangle replacement $\langle - ,T\rangle$ gives us an operation on Legendrian singular links which may be used to produce new invariants and to distinguish Legendrian singular links which are indistinguishable even by the Legendrian singular link types of all resolutions.
	
	\section{Applications}

	\subsection{Proof of Theorem \ref{thm:pairoflsk}}
	Now we are ready to use the singular connected sum to distinguish two Legendrian singular links described in Example~\ref{ex:final}.
	
	Suppose $L_e$ and $-L_e$ are the same in $\LSK$.
	Let $p$, $q$ be the singular points of $L_e$ and $-L_e$ respectively.
	Then any isotopy between them should map $p$ to $q$.
	Hence by the well-definedness of the singular connected sum, two singular connected sums $(L_e,p)\otimes(L,\mathbf{0})$ and $(-L_e,q)\otimes(L,\mathbf{0})$ are the same in $\LSK$ for any pair $(L,\mathbf{0})$ of Legendrian singular link $L$ and $\mathbf{0}\in\P(L)$.
	
	First, we choose $L$ of degree one as follows.
	{\scriptsize
		$$
\begingroup%
  \makeatletter%
  \providecommand\color[2][]{%
    \errmessage{(Inkscape) Color is used for the text in Inkscape, but the package 'color.sty' is not loaded}%
    \renewcommand\color[2][]{}%
  }%
  \providecommand\transparent[1]{%
    \errmessage{(Inkscape) Transparency is used (non-zero) for the text in Inkscape, but the package 'transparent.sty' is not loaded}%
    \renewcommand\transparent[1]{}%
  }%
  \providecommand\rotatebox[2]{#2}%
  \ifx\svgwidth\undefined%
    \setlength{\unitlength}{112.7bp}%
    \ifx\svgscale\undefined%
      \relax%
    \else%
      \setlength{\unitlength}{\unitlength * \real{\svgscale}}%
    \fi%
  \else%
    \setlength{\unitlength}{\svgwidth}%
  \fi%
  \global\let\svgwidth\undefined%
  \global\let\svgscale\undefined%
  \makeatother%
  \begin{picture}(1,0.64529725)%
    \put(0,0){\includegraphics[width=\unitlength]{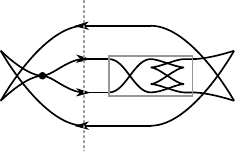}}%
    \put(0.39352263,0.5710759){\color[rgb]{0,0,0}\makebox(0,0)[lb]{\smash{$-\mathbf{0}_x$}}}%
    \put(0.39352263,0.42910607){\color[rgb]{0,0,0}\makebox(0,0)[lb]{\smash{$\mathbf{0}_y$}}}%
    \put(0.39352263,0.18065886){\color[rgb]{0,0,0}\makebox(0,0)[lb]{\smash{$\mathbf{0}_x$}}}%
    \put(0.39352263,0.03868903){\color[rgb]{0,0,0}\makebox(0,0)[lb]{\smash{$-\mathbf{0}_y$}}}%
    \put(0.14507542,0.25164378){\color[rgb]{0,0,0}\makebox(0,0)[lb]{\smash{$\mathbf{0}$}}}%
  \end{picture}%
\endgroup%

		$$
	}
	Now we find the left normal forms of both $L_e$ and $-L_e$ at each singular point.
	{\scriptsize
		$$
\begingroup%
  \makeatletter%
  \providecommand\color[2][]{%
    \errmessage{(Inkscape) Color is used for the text in Inkscape, but the package 'color.sty' is not loaded}%
    \renewcommand\color[2][]{}%
  }%
  \providecommand\transparent[1]{%
    \errmessage{(Inkscape) Transparency is used (non-zero) for the text in Inkscape, but the package 'transparent.sty' is not loaded}%
    \renewcommand\transparent[1]{}%
  }%
  \providecommand\rotatebox[2]{#2}%
  \ifx\svgwidth\undefined%
    \setlength{\unitlength}{112.7bp}%
    \ifx\svgscale\undefined%
      \relax%
    \else%
      \setlength{\unitlength}{\unitlength * \real{\svgscale}}%
    \fi%
  \else%
    \setlength{\unitlength}{\svgwidth}%
  \fi%
  \global\let\svgwidth\undefined%
  \global\let\svgscale\undefined%
  \makeatother%
  \begin{picture}(1,0.64529725)%
    \put(0,0){\includegraphics[width=\unitlength]{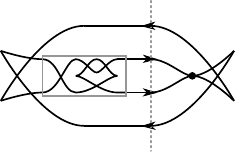}}%
    \put(0.78393966,0.25164376){\color[rgb]{0,0,0}\makebox(0,0)[lb]{\smash{$p$}}}%
    \put(0.53549246,0.5710759){\color[rgb]{0,0,0}\makebox(0,0)[lb]{\smash{$p_x$}}}%
    \put(0.5,0.42910607){\color[rgb]{0,0,0}\makebox(0,0)[lb]{\smash{$-p_y$}}}%
    \put(0.5,0.18065886){\color[rgb]{0,0,0}\makebox(0,0)[lb]{\smash{$-p_x$}}}%
    \put(0.53549246,0.03868903){\color[rgb]{0,0,0}\makebox(0,0)[lb]{\smash{$p_y$}}}%
  \end{picture}%
\endgroup%
\qquad
\begingroup%
  \makeatletter%
  \providecommand\color[2][]{%
    \errmessage{(Inkscape) Color is used for the text in Inkscape, but the package 'color.sty' is not loaded}%
    \renewcommand\color[2][]{}%
  }%
  \providecommand\transparent[1]{%
    \errmessage{(Inkscape) Transparency is used (non-zero) for the text in Inkscape, but the package 'transparent.sty' is not loaded}%
    \renewcommand\transparent[1]{}%
  }%
  \providecommand\rotatebox[2]{#2}%
  \ifx\svgwidth\undefined%
    \setlength{\unitlength}{112.7bp}%
    \ifx\svgscale\undefined%
      \relax%
    \else%
      \setlength{\unitlength}{\unitlength * \real{\svgscale}}%
    \fi%
  \else%
    \setlength{\unitlength}{\svgwidth}%
  \fi%
  \global\let\svgwidth\undefined%
  \global\let\svgscale\undefined%
  \makeatother%
  \begin{picture}(1,0.64529725)%
    \put(0,0){\includegraphics[width=\unitlength]{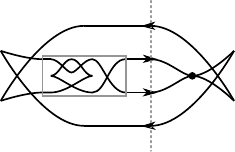}}%
    \put(0.78393966,0.25164376){\color[rgb]{0,0,0}\makebox(0,0)[lb]{\smash{$q$}}}%
    \put(0.53549246,0.5710759){\color[rgb]{0,0,0}\makebox(0,0)[lb]{\smash{$q_x$}}}%
    \put(0.5,0.42910607){\color[rgb]{0,0,0}\makebox(0,0)[lb]{\smash{$-q_y$}}}%
    \put(0.5,0.18065886){\color[rgb]{0,0,0}\makebox(0,0)[lb]{\smash{$-q_x$}}}%
    \put(0.53549246,0.03868903){\color[rgb]{0,0,0}\makebox(0,0)[lb]{\smash{$q_y$}}}%
  \end{picture}%
\endgroup%

		$$
	}
	The Legendrian knots $(L_e,p)\otimes(L,\mathbf{0})$, $(-L_e,q)\otimes(L,\mathbf{0})$ are as follows.
	{\scriptsize
		$$
\begingroup%
  \makeatletter%
  \providecommand\color[2][]{%
    \errmessage{(Inkscape) Color is used for the text in Inkscape, but the package 'color.sty' is not loaded}%
    \renewcommand\color[2][]{}%
  }%
  \providecommand\transparent[1]{%
    \errmessage{(Inkscape) Transparency is used (non-zero) for the text in Inkscape, but the package 'transparent.sty' is not loaded}%
    \renewcommand\transparent[1]{}%
  }%
  \providecommand\rotatebox[2]{#2}%
  \ifx\svgwidth\undefined%
    \setlength{\unitlength}{144.7bp}%
    \ifx\svgscale\undefined%
      \relax%
    \else%
      \setlength{\unitlength}{\unitlength * \real{\svgscale}}%
    \fi%
  \else%
    \setlength{\unitlength}{\svgwidth}%
  \fi%
  \global\let\svgwidth\undefined%
  \global\let\svgscale\undefined%
  \makeatother%
  \begin{picture}(1,0.50259157)%
    \put(0,0){\includegraphics[width=\unitlength]{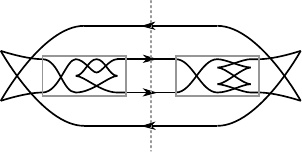}}%
    \put(0.5276434,0.44478406){\color[rgb]{0,0,0}\makebox(0,0)[lb]{\smash{$-\mathbf{0}_x$}}}%
    \put(0.5276434,0.33421046){\color[rgb]{0,0,0}\makebox(0,0)[lb]{\smash{$\mathbf{0}_y$}}}%
    \put(0.5276434,0.14070666){\color[rgb]{0,0,0}\makebox(0,0)[lb]{\smash{$\mathbf{0}_x$}}}%
    \put(0.5276434,0.03013306){\color[rgb]{0,0,0}\makebox(0,0)[lb]{\smash{$-\mathbf{0}_y$}}}%
    \put(0.4170698,0.44478406){\color[rgb]{0,0,0}\makebox(0,0)[lb]{\smash{$p_x$}}}%
    \put(0.3894264,0.33421046){\color[rgb]{0,0,0}\makebox(0,0)[lb]{\smash{$-p_y$}}}%
    \put(0.3894264,0.14070666){\color[rgb]{0,0,0}\makebox(0,0)[lb]{\smash{$-p_x$}}}%
    \put(0.4170698,0.03013306){\color[rgb]{0,0,0}\makebox(0,0)[lb]{\smash{$p_y$}}}%
  \end{picture}%
\endgroup%
\qquad 
\begingroup%
  \makeatletter%
  \providecommand\color[2][]{%
    \errmessage{(Inkscape) Color is used for the text in Inkscape, but the package 'color.sty' is not loaded}%
    \renewcommand\color[2][]{}%
  }%
  \providecommand\transparent[1]{%
    \errmessage{(Inkscape) Transparency is used (non-zero) for the text in Inkscape, but the package 'transparent.sty' is not loaded}%
    \renewcommand\transparent[1]{}%
  }%
  \providecommand\rotatebox[2]{#2}%
  \ifx\svgwidth\undefined%
    \setlength{\unitlength}{144.7bp}%
    \ifx\svgscale\undefined%
      \relax%
    \else%
      \setlength{\unitlength}{\unitlength * \real{\svgscale}}%
    \fi%
  \else%
    \setlength{\unitlength}{\svgwidth}%
  \fi%
  \global\let\svgwidth\undefined%
  \global\let\svgscale\undefined%
  \makeatother%
  \begin{picture}(1,0.50259157)%
    \put(0,0){\includegraphics[width=\unitlength]{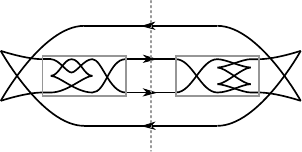}}%
    \put(0.5276434,0.44478406){\color[rgb]{0,0,0}\makebox(0,0)[lb]{\smash{$-\mathbf{0}_x$}}}%
    \put(0.5276434,0.33421046){\color[rgb]{0,0,0}\makebox(0,0)[lb]{\smash{$\mathbf{0}_y$}}}%
    \put(0.5276434,0.14070666){\color[rgb]{0,0,0}\makebox(0,0)[lb]{\smash{$\mathbf{0}_x$}}}%
    \put(0.5276434,0.03013306){\color[rgb]{0,0,0}\makebox(0,0)[lb]{\smash{$-\mathbf{0}_y$}}}%
    \put(0.4170698,0.44478406){\color[rgb]{0,0,0}\makebox(0,0)[lb]{\smash{$q_x$}}}%
    \put(0.3894264,0.33421046){\color[rgb]{0,0,0}\makebox(0,0)[lb]{\smash{$-q_y$}}}%
    \put(0.3894264,0.14070666){\color[rgb]{0,0,0}\makebox(0,0)[lb]{\smash{$-q_x$}}}%
    \put(0.4170698,0.03013306){\color[rgb]{0,0,0}\makebox(0,0)[lb]{\smash{$q_y$}}}%
  \end{picture}%
\endgroup%

		$$
	}
	
	Alternatively, we can interpret the above resulting diagrams in terms of the tangle replacement.
	Let us consider a tangle $T$ satisfying $(\widehat T,\mathbf{0})=(L,*)$ as follows.
	$$
	\includegraphics{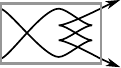}
	$$
	Finally, the resulting Legendrian knots are given as follows.
\[	\begin{array}{rcl}
	\langle(L_e,p),T\rangle&=\quad \vcenter{\hbox{\includegraphics{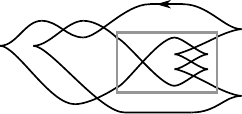}}}&=\quad\vcenter{\hbox{\includegraphics{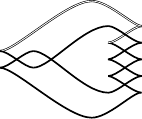}}}\\
	\langle(-L_e,p),T\rangle&=\quad \vcenter{\hbox{\includegraphics{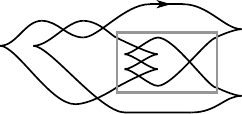}}}&=\quad \vcenter{\hbox{\includegraphics{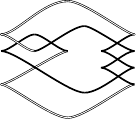}}}
	\end{array}
	\]
	The second equality for $\langle(L_e,p),T\rangle$ is given by one translation move\footnote{Legendrian isotopies can be interpreted in terms of combinatorial moves in the grid diagram: translations, commutations, (de)stabilizations, see \cite{NTh}.}, while the second equality for $\langle(-L_e,p),T\rangle$ needs two translation moves on doubled arcs.
	The topological knot types of the resulting Legendrian knots are same as $m(6_1)$.
	But their Poincar{\'e}-Chekanov polynomials, introduced in \cite{Ch}, are known to be different.
	$$
	P_{(L_e,p)\otimes(L,\mathbf{0})}(t)=t^{-3}+t+t^{3},\quad
	P_{(-L_e,q)\otimes(L,\mathbf{0})}(t)=t^{-1}+2t.
	$$
	Hence the resulting Legendrian knots are not the same in $\LK$, and therefore $L_e\neq-L_e$ in $\LSK$ which proves Theorem \ref{thm:pairoflsk}. 
	In other words, the singular knot type $\|L_e\|$ is $\{\mathcal{R}^m\}$-nonsimple.
	
	\subsection{Doubles in \texorpdfstring{$\LSK$}{Legendrian singular links} and Legendrian contact homology}
	
	For a given $L\in\LSK$, by virtue of the singular connected sum, we also have a Legendrian link from $L$ not obtained by the resolutions nor by a specific tangle choice. 
	
	Let $\P(L)=\{p_1,p_2,\dots,p_k\}$ and 
	consider $(L,p_i)\otimes(L,p_i)$ for each $i$.
	Let $B_{p_i}$ be the standard neighborhood of $p_i$ and $\phi_i:(\partial B_{p_i},\sigma(L,p_i))\to(\partial B_{p_i},-\sigma(L,p_i))$ be the gluing map defined as before.
	
	Now we introduce a {\em multiple singular connected sum} as follows.
	Consider two copies of $S^3\setminus(\coprod_i \mathring{B}_{p_i})$. Then we obtain $\#^{k-1}(S^2\times S^1)$ by gluing them via the map $\phi=\coprod_i \phi_i$ which admits the unique tight contact structure. The Legendrian link
	$$
	\mathcal{D}(L)=\left( L\setminus\coprod_i(L\cap\mathring{B}_{p_i})\right)\coprod_{\phi}\left( L\setminus\coprod_i(L\cap\mathring{B}_{p_i})\right)\subset \#^{k-1}(S^2\times S^1)
	$$
	is called a {\em double of $L$}. By the construction, $\mathcal{D}(L)$ has no singular points while the ambient contact manifold becomes a bit complicated.
	
	Recently a combinatorial description of the Legendrian contact homology algebra (DGA) of Legendrian links in $\#^{m}(S^2\times S^1)$ was developed in \cite{EN}. So we can assign an algebraic invariant, the DGA of $\mathcal{D}(L)$, to $L$.
	It would be interesting to investigate the relation between the DGA of $\mathcal{D}(L)$ and the DGAs of its resolutions $\mathcal{R}(L)$.
	
	Especially when $L$ is of degree one, we still have $\mathcal{D}(L)$ in $(S^3,\xi_{std})$.
	So we can use the ordinary Legendrian link invariants to study $L$. 
	As an example, the Legendrian singular knot $L_e$ depicted in Figure~\ref{fig:0,4_1} has the following front diagram in Figure~\ref{fig:doubleL_e} which can be obtained by concatenating the front diagram of $L_e$ with itself.
	\begin{figure}[ht]
		{\scriptsize
			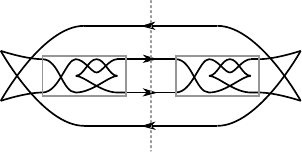}
		\caption{A front diagram of $\mathcal{D}(L_e)$}
		\label{fig:doubleL_e}
	\end{figure}
	One can check that $\mathcal{D}(L_e)$ is Legendrian isotopic to $\mathcal{D}(-L_e)$.
	
	\subsection{Splicing}\label{subsec:splicing}
	Another operation we can consider is {\em $\eta$-splicing $(L_1,p_1)*_\eta(L_2,p_2)$} of two Legendrian singular links $L_1$ and $L_2$ at regular points $p_1$ and $p_2$ for each $\eta\in\{+,-,0\}$.
	
	As mentioned before, the $(-\eta)$-resolution of $L_\eta$ at the singular point $\mathbf{0}$ gives a canonically ordered pair of Legendrian unknots $L_{\bigcirc,x}$ and $L_{\bigcirc,y}$ whose labels come from 
	$\sigma(L_\eta,\mathbf{0})$, that is, marked by $\mathbf{0}_x$ and $\mathbf{0}_y$.
	Then the $\eta$-splicing $(L_1,p_1)*_\eta(L_2,p_2)$ is defined by
	\begin{align*}
		(L_1,p_1)*_\eta(L_2,p_2)&=
		\left((L_1,p_1)\#(L_\eta,\mathbf{0}_x),\mathbf{0}_y\right)\#(L_2,p_2)\\
		&=
		(L_1,p_1)\#\left((L_\eta,\mathbf{0}_y)\#(L_2,p_2),\mathbf{0}_x\right).
	\end{align*}

	Indeed, all splicings are defined via connected sums and are therefore defined on $\SK$ as well. 
	
	It is important to note that the splicing operation is not commutative in general. 
	More precisely, this is because the triple $(L_\eta,\mathbf{0}_x,\mathbf{0}_y)$ is not the same as $(L_\eta,\mathbf{0}_y,\mathbf{0}_x)$.
	Indeed, $(L_2,p_2)*_\eta(L_1,p_1)$ is obtained from $(L_1,p_1)*_\eta(L_2,p_2)$ by performing the flip operation exactly once, and so
	they share many invariants such as (i) the classical invariants: $\SK$ type, $tb$ and $r$; and (ii) Legendrian link types of resolutions $\mathcal{R}$.
	
	\begin{figure}[ht]
		$$
		\vcenter{\hbox{\includegraphics{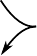}}}*_0
		\vcenter{\hbox{\includegraphics{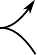}}}
		=\vcenter{\hbox{\includegraphics{F_LDCusp.pdf}}}\#
		\vcenter{\hbox{\includegraphics{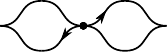}}}\#
		\vcenter{\hbox{\includegraphics{F_RUCusp.pdf}}}=
		\vcenter{\hbox{\includegraphics{F_singularcrossingRL_dot.pdf}}}
		$$
		$$
		\vcenter{\hbox{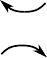}}
		=\vcenter{\hbox{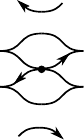}}
		=\vcenter{\hbox{\includegraphics{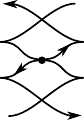}}},
		\qquad
		\vcenter{\hbox{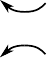}}
		=\vcenter{\hbox{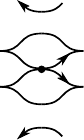}}
		=\vcenter{\hbox{\includegraphics{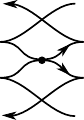}}}
		$$
		\caption{Front projections of splicings}
		\label{fig:splicing}
	\end{figure}
	
	As shown in Example~1, even the two splicings $(-*_0 L_\bigcirc)$ and $(L_\bigcirc*_0-)$ with the Legendrian unknot $L_\bigcirc$ are different in general. We call them {\em positive and negative singular stabilization} and denote them by $SS_{\pm}(L,p)$.
	The precise definitions are shown in Figure~\ref{fig:singularstab}.
	Topologically, these operations add a {\em singular kink} at $p$.
	We remark that singular stabilizations interpolate Legendrian links between given Legendrian singular links and their {\em transverse stabilizations} via $(+)$ and $(-)$-resolutions.
	
	\begin{figure}[ht]
		$$
		SS_+\left(\vcenter{\hbox{\includegraphics{F_LDCusp.pdf}}}\right)=
		\vcenter{\hbox{\includegraphics{F_LDCusp.pdf}}}*_0
		\vcenter{\hbox{\includegraphics{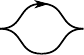}}}=
		\vcenter{\hbox{\includegraphics{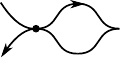}}}
		$$
		$$
		SS_-\left(\vcenter{\hbox{\includegraphics{F_RUCusp.pdf}}}\right)=
		\vcenter{\hbox{\includegraphics{L0.pdf}}}*_0
		\vcenter{\hbox{\includegraphics{F_RUCusp.pdf}}}=
		\vcenter{\hbox{\includegraphics{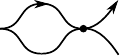}}}
		$$
		\caption{Positive and negative singular stabilizations $SS_{\pm}(L,p)$}
		\label{fig:singularstab}
	\end{figure}
	
	Moreover, by definition, the $0$-resolution $R_0((L_1,p_1)*_0(L_2,p_2),\mathbf{0})$ is a disjoint union $L_1\coprod L_2$, and the $(+)$-resolution $R_+((L_1,p_1)*_0(L_2,p_2),\mathbf{0})$ 
	is precisely a regular connected sum $(L_1,p_1)\#(L_2,p_2)$.
	Hence a $0$-splicing may be regarded as an intermediate state between the disjoint union and the connected sum. 
	
	On the other hand, $\eta$-splicings act like the inverses for the enhanced $(-\eta)$-resolutions $R_{-\eta}^m$ as follows.
	
	Let $L\in\LSK$ and $p\in\P(L)$. Suppose $R_{-\eta}(L,p)$ is a split link of 2-components for some $\eta\in\{+,-,0\}$.
	Then there is a sphere $S$ separating the components of $R_{-\eta}(L,p)$. Let $S_1$ and $S_2$ be parallel copies of $S$. By perturbing the $S_i$'s, we may assume that 
	$S_i$ intersects $L$ at 2 nearby points of $p$, and the arcs of $L$ contained between the $S_i$'s are precisely those in the standard neighborhood.
	The separating spheres $S_1$ and $S_2$ can be used to decompose $L$ into 3 connected summands, which coincide with those in the definition of $\eta$-splicing.
	Therefore $L$ is a $\eta$-splicing of two components of $R_{-\eta}(L,p)$ with the order coming from $\sigma(L,p)$.
	
	Conversely, let $L=(L_1,p_1)*_\eta(L_2,p_2)$ for some $\eta\in\{+,-,0\}$. Then we have $R_{-\eta}(L)=L_1\coprod L_2$, and lose the order of the splicing.
	However, the marking gives a label on each component of $L_1\coprod L_2$, which is equivalent to $\sigma(L,\mathbf{0})$, and we can recover $L$ from the $L_i$'s by using this order.
	Hence the enhanced $(-\eta)$-resolution $R_{-\eta}^m$ is the inverse of $*_\eta$ in both directions. In summary, we have the following theorem.
	
	\begin{thm}\label{thm:splicing}
		Let $K_1$ and $K_2$ be two singular links and $p_1$ and $p_2$ be regular points of $K_1$ and $K_2$, respectively.
		Then for each $\eta\in\{+,-,0\}$, the map 
		$$
		*_\eta:\mathcal{L}(K_1)\times\mathcal{L}(K_2)\cup\mathcal{L}(K_2)\times\mathcal{L}(K_1)
		\to\mathcal{L}( (K_1,p_1)*_\eta(K_2,p_2)),
		$$
		is bijective.
	\end{thm}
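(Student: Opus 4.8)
The plan is to exhibit an explicit two-sided inverse to $*_\eta$, assembled from the enhanced resolution $R_{-\eta}^m$ together with the separating-sphere decomposition described just above. Write $K=(K_1,p_1)*_\eta(K_2,p_2)$ and let $\mathbf{0}$ denote the singular point of $K$ produced by the splicing; since $\mathbf{0}$ is part of the data of the construction, there is no ambiguity about which singular point we resolve. First I would check that $*_\eta$ descends to a well-defined map on Legendrian isotopy classes: because $*_\eta$ is built entirely from connected sums of pointed Legendrian (singular) links — at a singular point and at regular points — this is inherited from Theorem~\ref{thm:singularconnectedsum}, Theorem~\ref{thm:singularconnectedsumdecomposition}, and the well-definedness of the ordinary Legendrian connected sum of pointed links (cf.\ \cite{EH}). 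Dually, $R_{-\eta}^m$ is a well-defined operation on $\LSK$ since resolutions are (see \S\ref{sec:resolution}) and markings are equivariant under contact isotopy.

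For injectivity I would verify that $R_{-\eta}^m$ applied at $\mathbf{0}$ undoes $*_\eta$. By the definition of the splicing, resolving the middle summand $L_\eta$ at its singular point splits it into a two-component Legendrian unlink, so that $R_{-\eta}\bigl((L_1,p_1)*_\eta(L_2,p_2),\mathbf{0}\bigr)=L_1\coprod L_2$, with the marking propagating $\sigma$ consistently across the connected sums; hence $R_{-\eta}^m$ returns precisely the ordered pair of labelled components $(L_1,L_2)$. If two splicings agree in $\LSK$, an ambient contact isotopy realizing the equivalence carries $\mathbf{0}$ to $\mathbf{0}$ and, by equivariance of the marking, each labelled component of one resolution to the like-labelled component of the other, so the labelled pairs coincide. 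This is exactly why the domain is the union $\mathcal{L}(K_1)\times\mathcal{L}(K_2)\cup\mathcal{L}(K_2)\times\mathcal{L}(K_1)$: when $K_1\neq K_2$ the labels distinguish the two factors, and when $K_1=K_2$ the two transposed inputs differ by exactly one flip, which $R_{-\eta}^m$ detects through the labels.

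For surjectivity, given $L\in\mathcal{L}(K)$ I would reconstruct a preimage. Since $\|R_{-\eta}(L,\mathbf{0})\|=K_1\coprod K_2$ is split, there is a separating sphere $S$ for $R_{-\eta}(L,\mathbf{0})$; taking two parallel copies $S_1,S_2$ and perturbing them so that each meets $L$ in two nearby points of $\mathbf{0}$ with the standard neighborhood of $\mathbf{0}$ caught between them decomposes $L$ into three connected summands. The local normal form at $\mathbf{0}$ forces the middle one to be $L_\eta$, and reading off the outer two summands $L_1,L_2$ (labelled by the marking) gives $L=(L_1,p_1)*_\eta(L_2,p_2)$ by the definition of the splicing together with the uniqueness of the decomposition, Theorem~\ref{thm:singularconnectedsumdecomposition}. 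Combining $R_{-\eta}^m\circ *_\eta=\mathrm{id}$ with $*_\eta\circ R_{-\eta}^m=\mathrm{id}$ then yields bijectivity.

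The main obstacle is the surjectivity step, and within it the claim that the parallel spheres $S_1,S_2$ can always be arranged so that the middle summand is \emph{exactly} $L_\eta$, with no stray stabilizations or kinks introduced by the perturbation. Handling this requires controlling the characteristic foliation near $\mathbf{0}$: one identifies the region between $S_1$ and $S_2$ with a standard $S^2\times I$ whose contact structure is pinned down by its boundary foliations (as in the proof of Theorem~\ref{thm:singularconnectedsum} and \cite[Theorem 4.9.4]{Ge}) and matches the arcs sitting in it with the tangle whose closure is $L_\eta$. A secondary point to treat with care is that the distinguished singular point $\mathbf{0}$ of $K$ is genuinely canonical, so that the recovered pair is independent of the chosen representative of $K$ and of the chosen separating sphere $S$; this again reduces to the well-definedness of the singular connected sum and of its decomposition.
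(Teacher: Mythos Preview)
Your proposal is correct and follows essentially the same route as the paper: the discussion immediately preceding the theorem constructs $R_{-\eta}^m$ as a two-sided inverse to $*_\eta$, obtaining surjectivity from the separating-sphere argument with parallel copies $S_1,S_2$ and injectivity from the fact that $R_{-\eta}$ returns $L_1\coprod L_2$ with the marking recording the order. Your flagged obstacle about the middle summand being exactly $L_\eta$ is handled in the paper by arranging that the arcs of $L$ between $S_1$ and $S_2$ are precisely those in the standard neighborhood of $\mathbf{0}$, so the middle piece is the standard model by construction.
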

	Note that when $K_1=K_2$, then the union above is not disjoint.
	As a corollary, we have the following theorem.
	\begin{thm}\label{thm:simple}
		Let $K_1,K_2\in \SK$ be $\{f_1,\dots,f_k\}$-simple and $p_i\in K_i$ be a nonsingular point. 
		Then $(K_1,p_1)*_\eta(K_2,p_2)$ is $\{f_1(R^m_{-\eta}),\dots,f_k(R^m_{-\eta})\}$-simple.
	\end{thm}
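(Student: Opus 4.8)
The plan is to obtain Theorem~\ref{thm:simple} as a formal consequence of Theorem~\ref{thm:splicing} together with the identification, made in \S\ref{subsec:splicing}, of the enhanced resolution $R^m_{-\eta}$ as a two-sided inverse of the splicing $*_\eta$. Write $K=(K_1,p_1)*_\eta(K_2,p_2)$. First I would record what Theorem~\ref{thm:splicing} gives: the map $*_\eta\colon \mathcal{L}(K_1)\times\mathcal{L}(K_2)\cup\mathcal{L}(K_2)\times\mathcal{L}(K_1)\to\mathcal{L}(K)$ is a bijection, and its inverse sends $L\in\mathcal{L}(K)$ to the pair $(L_1,L_2)$ read off the labelled split link $R^m_{-\eta}(L)=L_1\coprod L_2$, the marking distinguishing the part coming from the $\mathbf{0}_x$-side (namely $L_1\in\mathcal{L}(K_1)$) from the part coming from the $\mathbf{0}_y$-side (namely $L_2\in\mathcal{L}(K_2)$). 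Thus the marked Legendrian link type $R^m_{-\eta}(L)$ is equivalent data to the ordered pair $(L_1,L_2)$; keeping the order matters precisely when $K_1=K_2$, since then $(L_1,p_1)*_\eta(L_2,p_2)$ and $(L_2,p_2)*_\eta(L_1,p_1)$ generally differ by a flip.

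Next I would note that, for any invariant $f$ of $\LSK$, the enhanced invariant $f(R^m_{-\eta})$ evaluated at $L$ is $f$ computed on the labelled link $R^m_{-\eta}(L)$, and that restricting to the sublink carrying each label recovers $f(L_1)$ and $f(L_2)$ separately. Granting this, the main argument is short. Suppose $L,L'\in\mathcal{L}(K)$ satisfy $f_j(R^m_{-\eta}(L))=f_j(R^m_{-\eta}(L'))$ for every $j=1,\dots,k$, and write $R^m_{-\eta}(L)=L_1\coprod L_2$, $R^m_{-\eta}(L')=L_1'\coprod L_2'$ as above. Then $f_j(L_i)=f_j(L_i')$ for $i=1,2$ and all $j$, so the $\{f_1,\dots,f_k\}$-simplicity of $K_i$ forces $L_i=L_i'$ for $i=1,2$; hence $R^m_{-\eta}(L)=R^m_{-\eta}(L')$ as labelled Legendrian links, and applying the bijection $*_\eta$ yields $L=L'$ in $\LSK$. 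This is exactly the statement that $K$ is $\{f_1(R^m_{-\eta}),\dots,f_k(R^m_{-\eta})\}$-simple.

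The hard part is not any computation but pinning down the meaning of ``enhanced invariant'' so that the step ``$f$ of the whole labelled resolution determines $f$ of each labelled piece'' is legitimate; I would either build this into the definition of $f^m$ (the marked invariant is allowed to use the label decomposition, hence records $f$ on each marked part) or restrict attention to the invariants actually used in the paper, namely $\mathbf{tb}$, $\mathbf{r}$, Legendrian link types of resolutions, and $\mathcal{R}$ itself, for each of which the claim is immediate since a labelled disjoint union visibly determines its labelled summands. A secondary point requiring care is the case $K_1=K_2$: there the domain of $*_\eta$ in Theorem~\ref{thm:splicing} is the \emph{ordered} product $\mathcal{L}(K_1)^2$ (the ``union'' in that statement being a set union with itself), and it is precisely the marking on $R^m_{-\eta}$ that keeps track of this order, so once the bijection and its inverse are phrased with labels no separate case distinction is needed.
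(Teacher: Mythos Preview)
Your argument is correct and follows exactly the approach the paper intends: the paper states that the proof is obvious from the preceding discussion (Theorem~\ref{thm:splicing} and the identification of $R^m_{-\eta}$ as the inverse of $*_\eta$) and omits it. Your write-up simply makes this ``obvious'' deduction explicit, including the care about labels and the case $K_1=K_2$.
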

	The proof is obvious from the above discussion, and we omit the proof.
	
	\begin{cor}\label{cor:Linftysimple}
		For each $\eta\in\{+,-,0\}$, $K_\eta$ is $\{\mathcal{R}\}$-nonsimple but $\{\mathbf{tb}(R_{-\eta}^m), \mathbf{r}(R_{-\eta}^m)\}$-simple.
	\end{cor}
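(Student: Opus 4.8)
The plan is to realize $K_\eta$ itself as a splicing of two unknots and then combine Theorem~\ref{thm:simple} with Theorem~\ref{thm:splicing}. Recall that $L_\bigcirc$ denotes the maximal Thurston--Bennequin Legendrian unknot, which is the identity for the connected sum. Unwinding the definition of the $\eta$-splicing and using $(L,q)\#L_\bigcirc=L$ twice, one obtains
$$
(L_\bigcirc,p_1)*_\eta(L_\bigcirc,p_2)=\left((L_\bigcirc,p_1)\#(L_\eta,\mathbf{0}_x),\mathbf{0}_y\right)\#(L_\bigcirc,p_2)=L_\eta.
$$
Applying $\|\cdot\|$ then gives $K_\eta=(K_\bigcirc,p_1)*_\eta(K_\bigcirc,p_2)$ in $\SK$, where $K_\bigcirc$ denotes the topological unknot and $p_1,p_2$ are nonsingular points. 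I would first spell this identification out carefully, since it is the only place where care is required: one must check that the connected sums appearing in the definition of $*_\eta$ are taken at nonsingular nearby points of $L_\eta$, that summing with $L_\bigcirc$ there is the identity operation, and that the resulting singular point is the one designated for the splicing. All of this follows from the fact (recorded just after the definition of the singular connected sum) that $\#$ at nonsingular points recovers the usual connected sum, together with $L_\bigcirc$ being its identity.

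For the $\{\mathbf{tb}(R_{-\eta}^m),\mathbf{r}(R_{-\eta}^m)\}$-simplicity, recall that the unknot $K_\bigcirc$ is Legendrian simple \cite{EF,EH1}, hence $\{tb,r\}$-simple; since $K_\bigcirc$ has a single component, the data $(\mathbf{tb},\mathbf{r})$ carries the same information as $(tb,r)$, so $K_\bigcirc$ is $\{\mathbf{tb},\mathbf{r}\}$-simple. Then Theorem~\ref{thm:simple}, applied with $K_1=K_2=K_\bigcirc$, with $p_1,p_2$ the chosen nonsingular points, and with $\{f_1,f_2\}=\{\mathbf{tb},\mathbf{r}\}$, shows immediately that $K_\eta=(K_\bigcirc,p_1)*_\eta(K_\bigcirc,p_2)$ is $\{\mathbf{tb}(R_{-\eta}^m),\mathbf{r}(R_{-\eta}^m)\}$-simple, which is the second assertion.

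For the $\{\mathcal{R}\}$-nonsimplicity I would exhibit two non-contact-isotopic Legendrian singular links of type $K_\eta$ with the same set of resolution types. Choose distinct Legendrian unknots $U_1\ne U_2$ in $\mathcal{L}(K_\bigcirc)$, say with $tb(U_1)=-1$ and $tb(U_2)=-2$, and set $L=(U_1,p_1)*_\eta(U_2,p_2)$ and $L'=(U_2,p_2)*_\eta(U_1,p_1)$, both of singular link type $K_\eta$. By Theorem~\ref{thm:splicing} the map $*_\eta:\mathcal{L}(K_\bigcirc)\times\mathcal{L}(K_\bigcirc)\to\mathcal{L}(K_\eta)$ is a bijection, and $(U_1,U_2)\ne(U_2,U_1)$ as ordered pairs, so $L\ne L'$ in $\mathcal{L}(K_\eta)$. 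On the other hand, $L'$ is obtained from $L$ by a single flip at the singular point, this being exactly how interchanging the two splice factors acts, and a flip preserves the set of Legendrian link types of all resolutions, $\mathcal{R}(Fl)=\mathcal{R}$. Hence $\mathcal{R}(L)=\mathcal{R}(L')$, so $\mathcal{R}$ does not separate $L$ from $L'$ and $K_\eta$ is $\{\mathcal{R}\}$-nonsimple.

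I do not expect a serious obstacle: the only step that is more than bookkeeping is the identification $L_\eta=(L_\bigcirc,p_1)*_\eta(L_\bigcirc,p_2)$, and even that is routine once one keeps track of which points the connected sums and the splicing's singular point sit at. Everything else reduces to a direct application of Theorems~\ref{thm:simple} and~\ref{thm:splicing}, together with the already-recorded facts that a flip preserves $\mathcal{R}$ and that swapping the two splice factors is realized by a single flip.
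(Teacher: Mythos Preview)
Your proposal is correct and is precisely the argument the paper leaves implicit: the corollary is stated without proof immediately after Theorem~\ref{thm:simple}, and your route---realize $K_\eta=(K_\bigcirc,p_1)*_\eta(K_\bigcirc,p_2)$ via $L_\bigcirc\#L_\eta\#L_\bigcirc=L_\eta$, invoke Theorem~\ref{thm:simple} with the Legendrian simplicity of the unknot for the positive half, and use Theorem~\ref{thm:splicing} together with the paper's observation that swapping the splice factors is a single flip (hence preserves $\mathcal{R}$) for the negative half---is exactly what the surrounding discussion in \S\ref{subsec:splicing} sets up. The only cosmetic point is that when you pick $U_2$ with $tb=-2$ you should also fix $r(U_2)=\pm1$, but any choice works for the argument.
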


	\appendix
	\section{Projection from \texorpdfstring{$S^3$}{3-sphere} to \texorpdfstring{$\R^3$}{Euclidean 3-space}}\label{sec:projection}
	In this section, we will give a concrete way to describe the singular connected sum.
	
	Recall that we regard $S^3$ as the unit sphere in $\mathbb{C}^2$ whoose coordinates are $(z,w)$, and the one-point compactification of $\R^3$.
	Let us regard $(0,-1)\in S^3$ as $\infty$ which compactifies $\R^3$.
	Then there is a well-known contactomorphism $\Phi:(S^3\setminus\{(0,-1)\},\xi_{std})\to(\R^3,\xi_{rot})$ as follows.
	$$
	\Phi(z,w)=\left(
	\frac z{1+w},\frac{\operatorname{Im}w}{|1+w|^2}
	\right)\in \mathbb{C}\times\R\simeq\R^3.
	$$
	
	Recall that $S^3$ can be decomposed into two solid tori separated by the torus $|z|^2=|w|^2=1/2$, and their core curve corresponds to the two circles $|z|=1$ and $|w|=1$ in $\mathbb{C}^2$.
	Then via the map $\Phi$, they are mapped to the unit circle $S^1_{xy}$ and $z$-axis in $\R^3$.
	
	We consider the rotations on $S^3$ as follows.
	The first one $Rot^0_t:S^3\to S^3$ comes from the rotation about the origin in $\mathbb{C}^2$ as follows.
	$$
	Rot^0_t(z)=z \cos t + w \sin t,\quad
	Rot^0_t(w)=-z \sin t + w \cos t.
	$$
	It is easy to check that $Rot^0_t$ gives a contact isotopy on $S^3$. That is, it preserves $\xi_{std}$ for all $t$.
	
	The other ones are the rotations $Rot^z_t$ and $Rot^w_t$ about $z$ and $w$ axes, respectively, which are defined by 
	$$
	R^z_t(z)=z, R^z_t(w)=w e^{it},\quad R^w_t(w)=w, R^w_t(z)=z e^{it}.
	$$
	We denote the push-forwards of $Rot^0_t$, $Rot^z_t$ and $Rot^w_t$ via $\Phi$ by the same notation.
	
	Recall the standard unit disc $D_{std}\subset \R_{xy}\subset\R^3$. 
	Then $D_{std}$ corresponds to when $w$ is real and positive in $S^3$, and the image of $D_{std}$ under $Rot^z_{\pm\pi/2}$ forms a sphere corresponding to when $w$ is purely imaginary, that is, $w=i y_2$ for $y_2\in[-1,1]$.
	Then $\Phi$ gives equations 
	$$
	r^2=\frac{1-y_2^2}{1+y_2^2}, \quad z=\frac{y_2}{1+y_2^2}
	$$
	satisfying $r^4+4z^2=1$, the defining equation for $S_{std}$.
	Furthermore $S_{std}$ is an invariant subspace under the $\pi$-rotation about $z$-axis in $\mathbb{C}^2$ by definition.
	
	Note that the 4 regions in $\R^3$ separated by $S_{std}$ and $xy$-plane are
	cyclically related by $\pi/2$ rotation $Rot^z_{\pi/2}$.
	Moreover, it is not hard to check that the gluing map $\phi$ defined in \S\ref{subsec:singularconnectedsum}  is nothing but a restriction of $Rot^0_\pi$ to $S_{std}$.
	
	On the other hand, $Rot^0_{\pi/2}$ changes the roles of $z$ and $w$ up to $\pi/2$-rotation on $w$. Therefore the unit circle $S^1_{xy}\subset\R^3$ be mapped to the $z$-axis in $\R^3$, and the standard surfaces $S_{std}$ and $D_{std}$ correspond to noncompact surfaces $\widehat S_{std}$ and $\widehat D_{std}$, called {\em dual surfaces}, in $\R^3$ as depicted in Figure~\ref{fig:spheredual}. 
	Moreover, they correspond to when $z$ is purely imaginary, and when $z$ is real and positive, respectively.
	Hence for given $L\in\R^3$ with a singular point $p=\mathbf{0}$, the left normal form is obtained by $Rot^0_{\pi/2}(L)$.
	Similarly, the right normal form essentially comes from $Rot^0_{-\pi/2}(L)$, but the orientation does not match. Hence by compositing $Rot^w_\pi$, we have the exact right normal form, and the gluing map $\phi$ now becomes $Rot^z_{\pi}$ instead of $Rot^0_{\pi}$.
	
	\begin{figure}[ht]
		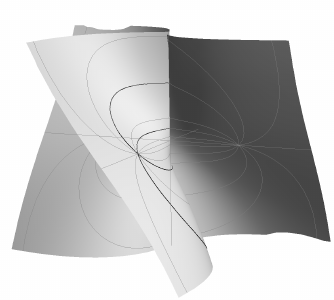
		\caption{Dual surfaces of $S_{std}$ and $D_{std}$}
		\label{fig:spheredual}
	\end{figure}
	
	In summary, the front projections of $Rot^0_{\pi/2}(L)$ and $Rot^w_\pi\circ Rot^0_{-\pi/2}(L)$ give us the left and right normal forms, respectively, and the gluing map $\phi$ is just $\pi$-rotation about $\widehat S^1_{std}=z\text{-axis}$.
	This is the justification for the diagrammatic definition for the singular connected sum.

\end{document}